\newtheorem{thm}{Theorem}[section]
\newtheorem{prop}[thm]{Proposition}
\newtheorem{cor}[thm]{Corollary}
\theoremstyle{remark}
\newtheorem{remark}[thm]{Remark}
\theoremstyle{definition}
\newtheorem{definition}[thm]{Definition}
\renewcommand*\env@matrix[1][*\c@MaxMatrixCols c]{%
  \hskip -\arraycolsep
  \let\@ifnextchar\new@ifnextchar
  \array{#1}}
\newcommand*\isom{\xrightarrow{\sim}}
\newcommand{\pair}[1]{\langle#1\rangle}
\newcommand{\divisor}{\operatorname{div}}
\newcommand{\Div}{\operatorname{Div}}
\newcommand{\codim}{\operatorname{codim}}
\newcommand{\Spec}{\operatorname{Spec}}
\def\qq{\mathbb{Q}}
\def\rr{\mathbb{R}}
\def\zz{\mathbb{Z}}
\def\cc{\mathbb{C}}
\def\nn{\mathbb{N}}
\def\mm{\mathcal{M}}
\def\ll{\mathcal{L}}
\def\bb{\mathcal{B}}
\def\oo{\mathcal{O}}
\def\xx{\tilde{X}}
\def\yy{\tilde{Y}}
\def\CH{CH}
\def\Kbar{\bar{K}}
\def\qbar{\bar{\qq}}
\def\Mhat{\hat{\mm}}
\def\can{\mathrm{can}}
\def\vareps{\varepsilon}
\def\h{\mathrm{h}}
\def\pp{\mathcal{P}}
\numberwithin{equation}{section}
\begin{document}

\title[N\'eron-Tate heights of cycles on jacobians]{N\'eron-Tate heights of cycles on jacobians}

\author{Robin de Jong}

\begin{abstract} We develop a method to calculate the N\'eron-Tate height of tautological integral cycles on jacobians of curves defined over number fields. As examples we obtain closed expressions for the N\'eron-Tate height of the difference surface, the Abel-Jacobi images of the square of the curve, and of any symmetric theta divisor. As applications we obtain a new effective positive lower bound for the essential minimum of any Abel-Jacobi image of the curve and a proof, in the case of jacobians, of a formula proposed by Autissier relating the Faltings height of a principally polarized abelian variety with the N\'eron-Tate height of a symmetric theta divisor.   
\end{abstract}

\subjclass[2010]{Primary 14G40, secondary 11G50, 14K15.}

\keywords{Faltings height, integrable line bundle, jacobian, N\'eron-Tate height, symmetric theta divisor, tautological cycle.}

\maketitle

\thispagestyle{empty}

\section{Introduction}

Our main aim in this paper is to develop a method to calculate the N\'eron-Tate heights $\h'_\ll(Z_{m,\alpha})$ of certain tautological integral cycles $Z_{m,\alpha}$ on jacobians $J$ of curves $X$ defined over a number field. The cycles $Z_{m,\alpha}$ are obtained as images inside $J$ of small cartesian powers $X^r$ of the curve. Here $m$ is an $r$-tuple of non-zero integers, and $\alpha$ is a chosen base divisor of degree one on $X$. We obtain closed expressions for the height of the difference surface inside $J$, the Abel-Jacobi images of the square of the curve, and of any symmetric theta divisor. 

In many cases, including the case of the Abel-Jacobi images of the curve $X$ itself, we are able to derive a positive lower bound for the N\'eron-Tate height of $Z_{m,\alpha}$. We recall that such lower bounds yield effective versions of the (generalized) Bogomolov conjecture for $Z_{m,\alpha}$. The closed expression that we obtain for the N\'eron-Tate height of a symmetric theta divisor leads to a proof, in the case of jacobians, of a formula, proposed around ten years ago by P. Autissier \cite{aut}, relating the Faltings height of a principally polarized abelian variety with the N\'eron-Tate height of a symmetric theta divisor.

Let $A$ be an abelian variety defined over $\qbar$, and let $\ll$ be a symmetric ample line bundle on $A$ defining a principal polarization on $A$. Then to each closed subvariety $Z$ of $A$ one has associated its N\'eron-Tate height $\h'_\ll(Z) \in \rr$, measuring in an intrinsic way the arithmetic complexity of $Z$. The case of points $Z$ being classical, approaches to define $\h'_\ll(Z)$ for higher dimensional $Z$ were only developed much more recently in work of P. Philippon \cite{phil}, W. Gubler \cite{gu}, J.-B. Bost, H. Gillet and C. Soul\'e \cite{bostintrinsic} \cite{bgs}, and S. Zhang \cite{zhsmall}. Two important properties of the N\'eron-Tate height are: for all integers $n>1$ and all subvarieties $Z\subset A$ we have a quadratic relation $\h'_\ll([n](Z))=n^2\,\h'_\ll(Z)$, and moreover we always have $\h'_\ll(Z) \geq 0$. 

One well known application of the N\'eron-Tate height has been in the study of the (generalized) Bogomolov conjecture for subvarieties of $A$. Let $Z \subset A$ be a closed subvariety and let
\[ e'_\ll(Z) = \sup_{Y \subset Z, \codim Y=1} \inf_{x \in (Z\setminus Y)(\qbar)} \h'_\ll(x)  \]
be the so-called (first) essential minimum of $Z$. Then Zhang showed in \cite{zhsmall} the fundamental inequalities
\begin{equation} \label{ineqs} 
(\dim Z +1) \,\h'_\ll(Z) \geq e'_\ll(Z) \geq \h'_\ll(Z) 
\end{equation}
between $e'_\ll(Z)$ and the N\'eron-Tate height $\h'_\ll(Z)$ of $Z$. In particular we have the equivalence 
\begin{equation} \label{equiv} e'_\ll(Z) > 0 \Longleftrightarrow \h'_\ll(Z) > 0 \, .  
\end{equation}
The generalized Bogomolov conjecture, proved by E. Ullmo and Zhang in \cite{ul} \cite{zhann} using equidistribution techniques, states that both conditions in (\ref{equiv}) are satisfied if and only if $Z$ is not a translate by a torsion point of an abelian subvariety of $A$. An effective positive lower bound for $\h'_\ll(Z)$ or $e'_\ll(Z)$ is usually called an ``effective Bogomolov-type result'' for $Z$. The methods of \cite{ul} \cite{zhann} do not provide such lower bounds, but later on such lower bounds have been given for general $Z$, starting with a fundamental work of S. David and P. Philippon \cite{cvv} \cite{dpI} \cite{dpII} \cite{ga}. In a rough form, these lower bounds are of the shape $\h'_\ll(Z) > B (\deg_\ll Z)^{-C}$ where $B, C$ are positive constants depending only on $A$ and $\ll$.

In this paper we are interested in the particular case of jacobians. Thus, let $X$ be a smooth projective connected curve of genus $g\ge 1$ defined over~$\bar{\qq}$, and let $(J,\lambda)$ denote the jacobian of $X$, endowed with its canonical principal polarization. Choose a divisor $\alpha$ of degree one on $X$. We consider integral cycles $Z_{m,\alpha}$ on $J$ obtained as the images of maps
\[ f =f_{m,\alpha} \colon X^r \longrightarrow J \, , \quad (x_1,\ldots,x_r) \mapsto 
\left[\sum_{i=1}^r m_i x_i - d \alpha\right] \, , \]
where $0 \leq r \leq g$ and where $m=(m_1,\ldots,m_r)$ is an $r$-tuple of non-zero integers with $d = \sum_{i=1}^r m_i$. We note that the maps $f$ are always generically finite. If $d=0$ we usually just write $f_m$ and $Z_m$, and for all $r \in \zz_{>0}$ we simply write $f_{r,\alpha}$ and $Z_{r,\alpha}$ in the fundamental case where $m$ is the all-$1$ tuple of length $r$.

Let $\ll$ be a symmetric ample line bundle on $J$ defining $\lambda$. Let $k \subset \qbar$ be a number field such that $X$ is defined and has semistable reduction over $k$. We note that such a number field exists by the semistable reduction theorem of Deligne-Mumford. Our aim is to express the N\'eron-Tate height $\h'_\ll(Z_{m,\alpha})$ of $Z_{m,\alpha}$ in terms of admissible arithmetic intersection theory on models of $X$ over the ring of integers of $k$ in the sense of Zhang \cite{zhadm} \cite{zhsmall}. For the cycle $Z_{1,\alpha}$ such an expression is already well known. 

Namely, let $\hat{\omega}$ be the relative dualizing sheaf of $X$ equipped with its canonical admissible metric \cite[Section~4]{zhadm}. Assume $g \geq 2$ and put $x_\alpha=\alpha - \frac{1}{2g-2}K_X$, where $K_X$ is a canonical divisor on $X$. Then the identity 
\begin{equation} \label{heightcurve} \h'_\ll(Z_{1,\alpha}) =  \frac{1}{8(g-1)[k:\qq]} 
\pair{\hat{\omega},\hat{\omega}} + \frac{g-1}{g}\h'_\ll(x_\alpha)  
\end{equation}
holds in $\rr$, as is proved in for instance \cite[Theorem 5.6]{zhadm} or \cite[Theorem~3.9]{zhsmall}. Here $\pair{\hat{\omega},\hat{\omega}}$ denotes the self-intersection of the relative dualizing sheaf on $X$ in the admissible sense \cite[Section 5.4]{zhadm}. We can further express the N\'eron-Tate height $\h'_\ll(x_\alpha)$ of the degree zero divisor $x_\alpha$ in terms of admissible intersection theory on $X$ by the celebrated Faltings-Hriljac formula, or Hodge Index theorem \cite[Section~5.4]{zhadm}.

Our approach is inspired by Zhang's work \cite{zhgs} on the height of Gross-Schoen cycles. We start by briefly reviewing this work. Let $M(k)_0$ denote the set of finite places of $k$, let $M(k)_\infty$ denote the set of complex embeddings of $k$, and let $M(k)= M(k)_0 \cup M(k)_\infty$.  An important role in \cite{zhgs} is played by a new real-valued local invariant $\varphi(X_v)$ of $X$ for all $v \in M(k)$. We refer to Section \ref{phi_explained} below for a precise definition of $\varphi(X_v)$. For now, we mention that we have $\varphi(X_v)=0$ for all $v \in M(k)$ if $g=1$, and that we have $\varphi(X_v)=0$ if $v$ is non-archimedean and $X$ has good reduction at $v$. For $v \in M(k)_\infty$ the invariant $\varphi(X_v)$ has independently been introduced and studied by N. Kawazumi in the papers \cite{kawhandbook} \cite{kaw}. For $v \in M(k)_0$ the invariant $\varphi(X_v)$ can be calculated from the polarized dual graph of the reduction of $X$ at $v$.

For $v \in M(k)_0$ we put $Nv= \#\kappa(v)$, where $\kappa(v)$ is the residue field at $v$, and for $v \in M(k)_\infty$ we put $Nv=\mathrm{e}$. We then have a well-defined global invariant $\sum_{v \in M(k)} \varphi(X_v) \log Nv$ of $X$. 
This global invariant shows up in \cite{zhgs} in a formula for the height $\pair{\Delta_\alpha,\Delta_\alpha}$ in the sense of Beilinson-Bloch \cite{beil} \cite{bljpaa} of the Gross-Schoen cycle $\Delta_\alpha \in CH(X^3)_0$ associated to $X$ and $\alpha$ (see \cite{gs} or \cite {zhgs} for a definition). Zhang's formula (cf. \cite[Theorem 1.3.1]{zhgs}) reads\footnote{We note that in \cite{zhgs} there appears a coefficient $6(g-1)$ in front of $[k:\qq]\,\h'_\ll(x_\alpha)$. The difference is due to a different normalization of the N\'eron-Tate height of a point.}
\begin{equation} \label{htgs} \pair{\Delta_\alpha,\Delta_\alpha} = \frac{2g+1}{2g-2} \pair{\hat{\omega},\hat{\omega}} - \sum_{v \in M(k)} \varphi(X_v) \log Nv + 12(g-1)\,[k:\qq]\,\h'_\ll(x_\alpha)  \, . 
\end{equation}
The arithmetic analogue of Grothendieck's Standard Conjecture of Hodge Index type proposed by Gillet-Soul\'e \cite{gsstandard}  predicts that $\pair{\Delta_\alpha,\Delta_\alpha} \geq 0$, which is however not known to be true in general. By (\ref{htgs}) the arithmetic Standard Conjecture implies the lower bound
\begin{equation} \label{conjlowerbound} \pair{\hat{\omega},\hat{\omega}} \geq \frac{2g-2}{2g+1} \sum_{v \in M(k)} \varphi(X_v) \log Nv 
\end{equation}
for $\pair{\hat{\omega},\hat{\omega}}$. As it turns out, the right hand side of this inequality is strictly positive if $g \geq 2$. Indeed, if $g \geq 2$ we have $\varphi(X_v)>0$ for each $v \in M(k)_\infty$, and moreover by a deep result due to Z. Cinkir \cite{ciinv} we have if $g\geq 2$ for each $v \in M(k)_0$ the inequality
\[ \varphi(X_v)\geq  c(g) \delta_0(X_v) + \sum_{i=1}^{[g/2]} \frac{2i(g-i)}{g}\delta_i(X_v)  \, , \]
where $c(g)$ is some positive constant depending only on $g$ and where, following traditional notation, we denote by $\delta_0(X_v)$ the number of non-separating geometric double points on the reduction of $X$ at $v$, and by $\delta_i(X_v)$ for $i=1,\ldots,[g/2]$ the number of geometric double points on the reduction of $X$ at $v$ whose local normalization has two connected components, one of arithmetic genus $i$, and one of arithmetic genus $g-i$. 

It follows that the conjectural inequality in (\ref{conjlowerbound}) states an effective positive lower bound for $\pair{\hat{\omega},\hat{\omega}}$, and therefore, by equation (\ref{heightcurve}), an effective Bogomolov-type result for $Z_{1,\alpha}$. We will show in 
Corollary \ref{effectivebog} below that the lower bound in (\ref{conjlowerbound}) actually holds with the coefficient $(2g-2)/(2g+1)$ replaced by $2/(3g-1)$. Hence, we unconditionally have an effective Bogomolov-type result for $Z_{1,\alpha}$.

If $X$ is hyperelliptic and $x_\alpha=0$, the Gross-Schoen cycle $\Delta_\alpha$ is rationally equivalent to zero \cite[Proposition~4.8]{gs}, and hence has vanishing Beilinson-Bloch height. We conclude that for hyperelliptic curves $X$ the equality
\[ \pair{\hat{\omega},\hat{\omega}} = \frac{2g-2}{2g+1} \sum_{v \in M(k)} \varphi(X_v) \log Nv \]
holds.

In this paper we will elaborate on Zhang's method in \cite{zhgs} that leads to his formula (\ref{htgs}) for the height of the Gross-Schoen cycle.
We will show that it allows more generally to compute the height of any of the cycles $Z_{m,\alpha}$ introduced above. More precisely, we will show the following result.
\begin{thm} \label{mainintro}
Assume $g \geq 2$. Let $0 \leq r \leq g$ and let $m=(m_1,\ldots,m_r)$ be an $r$-tuple of non-zero integers. There exist rational numbers $a, b, c$ depending only on $m$ and $g$ such that for all number fields $k$, all smooth projective geometrically connected curves $X$ of genus $g$ with semistable reduction over $k$, and all $\alpha \in \Div^1 X$ the equality
\[ \h'_\ll(Z_{m,\alpha}) = \frac{1}{[k:\qq]} \left( a \, \pair{\hat{\omega},\hat{\omega}} + b \, \sum_{v \in M(k)} \varphi(X_v) \log N v \right) + c \, \h'_\ll(x_\alpha) \]
holds. Here $x_\alpha=\alpha - \frac{1}{2g-2}K_X$.
\end{thm}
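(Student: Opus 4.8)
The plan, following Zhang's treatment of Gross--Schoen cycles in \cite{zhgs}, is to express $\h'_\ll(Z_{m,\alpha})$ as a single arithmetic self-intersection number on a model of $X^r$ and then to evaluate it by expanding a suitable decomposition of the metrized line bundle involved. Write $f=f_{m,\alpha}\colon X^r\to J$, let $\overline{\ll}$ be $\ll$ equipped with its canonical (N\'eron--Tate) adelic metric --- which is integrable in the sense of Zhang, and hence so is $f^*\overline{\ll}$ --- and let $j_\alpha\colon X\to J$, $x\mapsto[x-\alpha]$, be the Abel--Jacobi map. Since $d=\sum_i m_i$, the morphism $f$ is literally the sum $\sum_{i=1}^r m_i\,(j_\alpha\circ p_i)$ in the group $J$, where $p_i\colon X^r\to X$ denotes the $i$-th projection. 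As $f$ is generically finite we have $f_*[X^r]=\deg(f)\,[Z_{m,\alpha}]$, so the projection formula for arithmetic intersection numbers together with Zhang's expression for the height of a cycle as a normalized top arithmetic self-intersection of a metrized line bundle \cite{zhsmall} gives
\[
\h'_\ll(Z_{m,\alpha})\;=\;\frac{1}{[k:\qq]}\cdot\frac{\hat{c}_1\bigl(f^*\overline{\ll}\bigr)^{\,r+1}\cdot[\overline{X^r}]}{(r+1)\,c_1\bigl(f^*\ll\bigr)^{\,r}\cdot[X^r]}\,,
\]
where the denominator equals the positive integer $\deg(f)\cdot\deg_\ll(Z_{m,\alpha})$, which depends only on $m$ and $g$. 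Everything thus reduces to the integrable metrized line bundle $f^*\overline{\ll}$ on $X^r$.

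The theorem of the cube, together with the symmetry of $\ll$, yields the parallelogram law and hence, for the canonical metrics, an isometry
\[
f^*\overline{\ll}\;\cong\;\bigotimes_{i=1}^r\bigl(p_i^*\,j_\alpha^*\overline{\ll}\bigr)^{\otimes m_i^2}\;\otimes\;\bigotimes_{1\le i<j\le r}\bigl(p_{ij}^*\,(j_\alpha\times j_\alpha)^*\Pbar\bigr)^{\otimes m_i m_j}\,,
\]
where $p_{ij}\colon X^r\to X\times X$ and where $\Pbar=\mu^*\overline{\ll}\otimes p_1^*\overline{\ll}^{\,-1}\otimes p_2^*\overline{\ll}^{\,-1}$ is the metrized Mumford bundle on $J\times J$, with $\mu$ the addition map. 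One then rewrites the two building blocks in terms of the admissible data on $X$: the pullback $j_\alpha^*\overline{\ll}$ equals, in $\widehat{\Pic}(X)_\qq$, a universal $\qq$-combination of Zhang's admissible dualizing sheaf $\hat{\omega}$ and of the degree-zero divisor $x_\alpha$ taken with its canonical admissible metric (this uses Riemann's theorem, describing $j_\alpha^*\oo(\Theta)$ in terms of $K_X$ and $\alpha$, together with the compatibility of the canonical metric on $\ll$ with the admissible metric, cf.\ \cite{zhadm}); and $(j_\alpha\times j_\alpha)^*\Pbar$ equals, modulo pullbacks from the two factors, minus the admissible metrized diagonal $\oo(\Delta)_\a$ on $X\times X$, the underlying class of the Mumford bundle restricted along Abel--Jacobi on both factors being minus the diagonal up to fibre classes. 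Substituting, one obtains an identity in $\widehat{\Pic}(X^r)_\qq$ of the form
\[
f^*\overline{\ll}\;=\;\sum_{i=1}^r A_i\,p_i^*\hat{\omega}\;+\;\sum_{1\le i<j\le r}B_{ij}\,p_{ij}^*\oo(\Delta)_\a\;+\;\sum_{i=1}^r C_i\,p_i^*x_\alpha\,,
\]
with $A_i,B_{ij},C_i\in\qq$ explicit quadratic expressions in $m_1,\dots,m_r$ whose coefficients depend only on $g$.

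It then remains to expand $\hat{c}_1(f^*\overline{\ll})^{\,r+1}\cdot[\overline{X^r}]$ by multilinearity and to evaluate each resulting monomial: such a monomial is a product of pullbacks of $\hat{\omega}$ and $x_\alpha$ from single factors and of $\oo(\Delta)_\a$ from pairs of factors, and, recording which factors of $X^r$ occur, the product formula for intersection numbers on $X^r$ splits it as a product of arithmetic intersection numbers on $X$ and on smaller powers of $X$. On $X$ these are $\qq$-combinations of $\pair{\hat{\omega},\hat{\omega}}$ and $\h'_\ll(x_\alpha)$ by the Hodge index theorem / Faltings--Hriljac \cite[Section~5.4]{zhadm}. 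The numbers involving one or more admissible diagonals on $X\times X$ or on higher powers of $X$ are reduced to the case of $X$ by iteratively restricting along a diagonal and invoking the admissible adjunction formula; the correction term appearing in admissible adjunction is precisely where the local invariants $\varphi(X_v)$ enter --- this is the content of Zhang's local computations in \cite{zhgs} (Arakelov Green's functions on $X_v$ and $X_v\times X_v$ at the archimedean places, and the admissible Green's function on the reduction graph at the finite places). Hence each monomial evaluates to a $\qq$-combination of $\pair{\hat{\omega},\hat{\omega}}$, $\sum_{v\in M(k)}\varphi(X_v)\log Nv$ and $[k:\qq]\,\h'_\ll(x_\alpha)$ with coefficients depending only on $m$ and $g$; summing, and dividing by $(r+1)\,c_1(f^*\ll)^r\cdot[X^r]$ and by $[k:\qq]$, yields the asserted formula --- in particular the a priori possible leftover term proportional to $[k:\qq]$ alone is forced to cancel, as one may verify against the known case $r=1$, namely (\ref{heightcurve}).

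The main obstacle lies in the two structural inputs of the middle step: first, identifying $f^*\overline{\ll}$ with a $\qq$-combination of $p_i^*\hat{\omega}$, $p_{ij}^*\oo(\Delta)_\a$ and $p_i^*x_\alpha$ whose coefficients depend only on $m$ and $g$, which rests on the precise behaviour of the canonically metrized symmetric theta bundle under powers of the Abel--Jacobi map; and second, proving that every arithmetic intersection number of the admissible diagonals on powers of $X$ reduces, through admissible adjunction, to $\pair{\hat{\omega},\hat{\omega}}$, $\sum_{v\in M(k)}\varphi(X_v)\log Nv$ and $\h'_\ll(x_\alpha)$ with universal coefficients --- essentially a re-run of the archimedean and non-archimedean local analyses of \cite{zhgs} now carried out with arbitrary weights $m_i$. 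Once these are in place the determination of $a$, $b$, $c$ is a finite combinatorial computation.
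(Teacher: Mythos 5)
Your proposal is correct and follows essentially the same route as the paper: you write the height as a normalized adelic self-intersection on $X^r$ of the pulled-back canonically metrized theta bundle, decompose that bundle into $p_i^*\hat{\omega}$, $p_{ij}^*\hat{\Delta}$ and metrized $\alpha$-terms with coefficients quadratic in $m$, and reduce the multilinear expansion, via product and restriction formulas, admissible adjunction, Faltings--Hriljac, and Zhang's local computations giving $\langle\hat{\Delta},\hat{\Delta},\hat{\Delta}\rangle=\pair{\hat{\omega},\hat{\omega}}-\sum_{v}\varphi(X_v)\log Nv$, to the three invariants. The only cosmetic differences are that the paper derives the decomposition through Deligne pairings along $X^{r+1}\to X^r$ (Proposition \ref{delpairingbiext}) rather than directly from the theorem of the cube, tracks the constant metric shifts explicitly via $\hat{\beta}=2\hat{\alpha}-\pair{\hat{\alpha},\hat{\alpha}}$ (your ``canonical admissible metric on $x_\alpha$'' is only defined up to such constants), and formalizes your ``finite combinatorial computation'' by a graphical contraction calculus that guarantees the coefficients depend only on $m$ and $g$.
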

\begin{cor} Let $X$ be a smooth projective connected curve over $\qbar$ of genus $g \geq 2$, and let $\alpha \in \Div^1 X$. Let $V$ be the $\qq$-span inside $\rr$ of all 
$\h'_\ll(Z_{m,\alpha})$ where $m$ is an $r$-tuple of non-zero integers with $r \leq g$. Then $\dim_\qq V \leq 3$. If $X$ is hyperelliptic, we have $\dim_\qq V \leq 2$.
\end{cor}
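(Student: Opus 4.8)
The plan is to obtain both bounds as formal consequences of Theorem \ref{mainintro}. First I would fix $X/\qbar$ and $\alpha \in \Div^1 X$ and choose a number field $k$ over which $X$ is defined, has semistable reduction, and carries $\alpha$; then I would set
\[ A = \frac{1}{[k:\qq]}\pair{\hat{\omega},\hat{\omega}}, \qquad B = \frac{1}{[k:\qq]}\sum_{v \in M(k)}\varphi(X_v)\log Nv, \qquad C = \h'_\ll(x_\alpha) . \]
The next step is to observe that $A$, $B$, $C$ are intrinsic to $X$ (and, for $C$, to $\alpha$): admissible self-intersection numbers and the sum $\sum_{v}\varphi(X_v)\log Nv$ both scale by the degree under a finite extension of the base field, while $\h'_\ll(x_\alpha)$ is an already-normalized height of a point, so none of $A$, $B$, $C$ depends on the choice of $k$. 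Granting this, Theorem \ref{mainintro} provides, for every $r \le g$ and every $r$-tuple $m$ of non-zero integers, rationals $a_m, b_m, c_m$ (depending only on $m$ and $g$) with $\h'_\ll(Z_{m,\alpha}) = a_m A + b_m B + c_m C$. Hence $V \subseteq \spann_\qq\{A,B,C\}$ and $\dim_\qq V \le 3$.

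For the hyperelliptic refinement I would invoke the equality $\pair{\hat{\omega},\hat{\omega}} = \frac{2g-2}{2g+1}\sum_{v \in M(k)}\varphi(X_v)\log Nv$ recorded in the introduction; recall that it comes from choosing $\alpha_0 \in \Div^1 X$ with $x_{\alpha_0}=0$ (possible after enlarging $k$, since $\Pic^0 X(\qbar)$ is divisible), noting that $\Delta_{\alpha_0}$ is then rationally trivial by \cite[Proposition~4.8]{gs} so that $\pair{\Delta_{\alpha_0},\Delta_{\alpha_0}}=0$, and reading off the claim from Zhang's formula (\ref{htgs}). Since $\pair{\hat{\omega},\hat{\omega}}$ and $\sum_{v}\varphi(X_v)\log Nv$ do not involve $\alpha$, this gives $A = \frac{2g-2}{2g+1}B$ for the $\alpha$ under consideration. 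Substituting into the expression of Theorem \ref{mainintro} turns each $\h'_\ll(Z_{m,\alpha})$ into $\bigl(\tfrac{2g-2}{2g+1}a_m + b_m\bigr)B + c_m C$, so $V \subseteq \spann_\qq\{B,C\}$ and $\dim_\qq V \le 2$.

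I do not expect a genuine obstacle here: the entire content sits in Theorem \ref{mainintro}, and the corollary is pure bookkeeping. The one place calling for a word of care is the independence of $A$, $B$, $C$ of the auxiliary field $k$ — needed so that ``the'' $\qq$-span $V$ is well defined — and, in the hyperelliptic case, the (immediate) remark that the proportionality between $\pair{\hat{\omega},\hat{\omega}}$ and $\sum_{v}\varphi(X_v)\log Nv$ does not involve $\alpha$ and so applies to the $\alpha$ at hand.
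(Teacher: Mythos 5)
Your proposal is correct and is essentially the argument the paper intends: Theorem \ref{mainintro} places every $\h'_\ll(Z_{m,\alpha})$ in the $\qq$-span of $\frac{1}{[k:\qq]}\pair{\hat{\omega},\hat{\omega}}$, $\frac{1}{[k:\qq]}\sum_v \varphi(X_v)\log Nv$ and $\h'_\ll(x_\alpha)$, and in the hyperelliptic case the relation $\pair{\hat{\omega},\hat{\omega}}=\frac{2g-2}{2g+1}\sum_v\varphi(X_v)\log Nv$ recorded in the introduction (via the vanishing of the Gross--Schoen height) cuts the span down to two generators. Your extra care about independence of the auxiliary field $k$ is fine but not strictly needed, since one may simply fix one admissible $k$ and the heights themselves are intrinsic.
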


Apart from \cite{zhgs}, our proof of Theorem \ref{mainintro} is heavily inspired by a method developed recently by R.~Wilms in the context of the moduli space of curves \cite{wi}. In particular, our proof provides an algorithm to compute $a$, $b$ and $c$ from $m$ and~$g$. Upon hearing about our proof, D.~Holmes implemented the algorithm in \verb+SAGE+, and kindly made it publicly available via \verb+https://arxiv.org/abs/1610.01932+. From running the algorithm it seems not straightforward in general to give a closed expression for $a$, $b$ or $c$ in terms of $m$ and~$g$. We will illustrate Theorem \ref{mainintro} by exhibiting some particular examples, and deducing some consequences, in the cases $r=2$ and $r=g-1$, the case $r=1$ being essentially dealt with by (\ref{heightcurve}).

Here is first of all what we find if we set $r=2$. 
\begin{thm} \label{squareintro} Let $X$ be a smooth projective geometrically connected curve of genus $g \geq 2$ with semistable reduction over the number field $k$.  Let $F=Z_{(1,-1)}$ be the image of $X^2$ in $J$ under the map $X^2 \to J$ given by $(x,y) \mapsto [x-y]$. Then the equality
\[ 12g(g-1)\, [k:\qq] \, \h'_\ll(F) = (3g-1)\,\pair{\hat{\omega},\hat{\omega}} - 2 \sum_{v \in M(k)} \varphi(X_v) \log Nv \]
holds. Next, let $\alpha \in \mathrm{Div}^1 X$, and let $Z_{2,\alpha}$ be the image of $X^2$ in $J$ under the map $X^2 \to J$ given by $(x,y) \mapsto [x+y-2\alpha]$.   Then the equality
\[ \begin{split} 12g(g-1)& \,[k:\qq] \, \h'_\ll(Z_{2,\alpha})  = \frac{3g^2-8g-1}{g-1}\pair{\hat{\omega},\hat{\omega}} \\ & + 2 \sum_{v \in M(k)} \varphi(X_v)\log Nv + 48(g-1)(g-2) \,[k:\qq]\,\h'_\ll(x_\alpha) \end{split} \]
holds.
\end{thm}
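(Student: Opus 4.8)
Both identities are the case $r=2$ of Theorem~\ref{mainintro}, for the tuples $m=(1,-1)$ (giving $F=Z_{(1,-1)}$) and $m=(1,1)$ (giving $Z_{2,\alpha}$); so the task is to run the algorithm underlying that theorem in these two cases and to read off the resulting coefficients.

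The first step is to determine the numerical class of $f_m^*c_1(\ll)$ on $X^2$. Let $F_1,F_2\subset X^2$ be the fibres of the two projections and let $\Delta$ be the diagonal. A seesaw argument together with the theorem of the cube --- using that $f_{(1,-1)}$ restricted to a fibre is a translate of $\pm$ the Abel--Jacobi map and collapses $\Delta$ to a point of $J$, while $f_{(1,1),\alpha}$ restricted to $\Delta$ is $[2]$ composed with an Abel--Jacobi map --- gives
\[ f_{(1,-1)}^*c_1(\ll)\equiv (g-1)(F_1+F_2)+\Delta,\qquad f_{(1,1),\alpha}^*c_1(\ll)\equiv (g+1)(F_1+F_2)-\Delta . \]
Hence $\bigl(f_m^*c_1(\ll)\bigr)^2\cdot[X^2]=2g(g-1)$ in both cases; this common value explains why the same factor $12g(g-1)$ occurs on the left of both formulas, once one recalls that $\h'_\ll(Z_{m,\alpha})$ is, with the chosen normalization of the N\'eron--Tate height, a fixed rational multiple of $\overline{\bigl(f_m^*c_1(\bar\ll)\bigr)}{}^{3}$ divided by $\bigl(f_m^*c_1(\ll)\bigr)^2\cdot[X^2]$, an expression that does not see $\deg f_m$ by the projection formula, although $\deg f_{(1,-1)}=1$ while $\deg f_{(1,1),\alpha}=2$.

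The second step refines this to an identity of admissible adelic line bundles on $X^2$. Factor $f_{(1,1),\alpha}$ as $X^2\xrightarrow{\iota_\alpha\times\iota_\alpha}J\times J\to J$ and $f_{(1,-1)}$ as $X^2\xrightarrow{\iota_\alpha\times([-1]\circ\iota_\alpha)}J\times J\to J$, where the second arrow is addition and $\iota_\alpha$ is $x\mapsto[x-\alpha]$. Pulling back the theorem-of-the-cube decomposition $m^*\bar\ll\cong p_1^*\bar\ll\otimes p_2^*\bar\ll\otimes\Lambda$ of the addition map, with $\Lambda$ the Mumford bundle, and using $[-1]^*\bar\ll\cong\bar\ll$, one writes $f_m^*\bar\ll$ as $p_1^*(\iota_\alpha^*\bar\ll)\otimes p_2^*(\iota_\alpha^*\bar\ll)$ twisted by a ``correspondence'' adelic line bundle of numerical class $\pm(\Delta-F_1-F_2)$ carrying the admissible metric inherited from $\Lambda$. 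The arithmetic self-intersections of $\iota_\alpha^*\bar\ll$ on the model of $X$ are given by (\ref{heightcurve}) in terms of $\pair{\hat\omega,\hat\omega}$ and $\h'_\ll(x_\alpha)$; the restriction of $\Lambda$ to a fibre is algebraically trivial, so that when one expands $\overline{\bigl(f_m^*c_1(\bar\ll)\bigr)}{}^{3}$ by multilinearity the whole $\alpha$-dependence is packaged through $\iota_\alpha^*\bar\ll$ and every remaining term involves only the correspondence bundle. The arithmetic self-intersections of the correspondence bundle are evaluated by the same local analysis --- archimedean contributions via Kawazumi's invariant, non-archimedean via the combinatorics of the reduction graphs --- that produces the invariant $\varphi(X_v)$ in Zhang's formula (\ref{htgs}); they come out in terms of $\pair{\hat\omega,\hat\omega}$ and the global invariant $\sum_{v\in M(k)}\varphi(X_v)\log Nv$. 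Collecting coefficients --- and noting that for $m=(1,-1)$ all $\alpha$-dependence cancels, consistently with $f_{(1,-1)}$ not involving $\alpha$, so that the $\h'_\ll(x_\alpha)$-term is absent, whereas for $m=(1,1)$ the base divisor has degree $d=2$ and leaves a surviving $\h'_\ll(x_\alpha)$-term with coefficient $48(g-1)(g-2)$ --- yields the two displayed identities.

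I expect the main obstacle to be the precise evaluation of these correspondence self-intersections: pinning down the local defect by which the restriction of the correspondence bundle to $\Delta$ differs from $\hat\omega^{\vee}$ --- equivalently, describing the pulled-back cubical metric fibre by fibre in terms of the reduction graph of $X$ at the finite places and of the Arakelov--Green's function at the archimedean places --- and checking that it reproduces the coefficient $-2$ (resp.\ $+2$) of $\sum_{v\in M(k)}\varphi(X_v)\log Nv$ displayed for $F$ (resp.\ $Z_{2,\alpha}$), as well as the coefficients of $\pair{\hat\omega,\hat\omega}$. As consistency checks one may specialise to $g=2$, where $F$ and $Z_{2,\alpha}$ have codimension at most $1$ in $J$ and the formulas must agree with (\ref{heightcurve}), (\ref{htgs}) and the known expression for the Faltings height of a genus-$2$ jacobian, and to hyperelliptic $X$ with $x_\alpha=0$, where $\pair{\hat\omega,\hat\omega}=\tfrac{2g-2}{2g+1}\sum_{v\in M(k)}\varphi(X_v)\log Nv$ must make both formulas collapse to the height of a symmetric theta divisor.
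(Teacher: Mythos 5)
Your overall strategy is in essence the paper's own: Theorem~\ref{main} provides precisely the cube-type isometry you describe, namely $f_m^*\hat{\ll}^{\otimes 2}\cong\sum_i m_i^2\hat{\omega}_i-2\sum_{i<j}m_im_j\hat{\Delta}_{ij}+2\sum_i m_i\hat{D}_i-\pair{\hat{D},\hat{D}}$, and the proofs of Theorems~\ref{heightdifferencesurface} and~\ref{square} then expand the triple self-intersection of $2\hat{\Delta}+\hat{\omega}_1+\hat{\omega}_2$, resp. $\hat{\omega}_1+\hat{\omega}_2-2\hat{\Delta}+2\hat{\beta}_1+2\hat{\beta}_2$, by multilinearity. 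Your numerical classes $(g-1)(F_1+F_2)+\Delta$ and $(g+1)(F_1+F_2)-\Delta$ are correct, as is the remark that the height only sees the ratio so that $\deg f_m$ cancels (your aside that $\deg f_{(1,-1)}=1$ fails for hyperelliptic $X$, where it equals $2$; this is harmless for exactly that reason, and the paper instead uses $f_{(1,-1)*}[X^2]=f_{2,\alpha,*}[X^2]$ to get $\deg f\cdot\deg_\ll F=2g(g-1)$).

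The genuine gap is that the coefficients $(3g-1)$, $-2$, $\tfrac{3g^2-8g-1}{g-1}$, $+2$, $48(g-1)(g-2)$ are never derived: you assert that ``collecting coefficients yields the two displayed identities'' while explicitly deferring ``the precise evaluation of these correspondence self-intersections'' as the main obstacle, and that evaluation is the entire content of the theorem. Concretely, what is missing is (i) the adjunction isometry $\Delta^*\hat{\Delta}\cong\hat{\omega}^{\otimes -1}$ of (\ref{adjunction}); (ii) the identity $\langle\hat{\Delta},\hat{\Delta},\hat{\Delta}\rangle=\pair{\hat{\omega},\hat{\omega}}-\sum_{v\in M(k)}\varphi(X_v)\log Nv$ (Proposition~\ref{triplediagonal}), which rests on Zhang's local computation $\int g_{\mu,v}\,c_1(\hat{\Delta})\,c_1(\hat{\Delta})=\varphi(X_v)\log Nv$ at every place --- this is the only point where $\varphi$ enters, and appealing to ``the same local analysis as in (\ref{htgs})'' does not substitute for it; and (iii) the elementary mixed-term rules $\langle\hat{\Delta},\hat{\Delta},\hat{\sigma}_i\rangle=-\pair{\hat{\omega},\hat{\sigma}}$, $\langle\hat{\Delta},\hat{\sigma}_i,\hat{\tau}_j\rangle=\pair{\hat{\sigma},\hat{\tau}}$, $\pair{\hat{\beta},\hat{\beta}}=0$, together with (\ref{htpoint}) to convert $\pair{\hat{\beta},\hat{\omega}}$ into $\h'_\ll(x_\alpha)$. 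There is also a small inaccuracy in your packaging: the pulled-back Mumford bundle on $X^2$ is not $\alpha$-free; as an adelic bundle it is $-\hat{\Delta}+\hat{\alpha}_1+\hat{\alpha}_2$ up to a constant, so the $\alpha$-dependence is not entirely carried by $\iota_\alpha^*\hat{\ll}$ --- after full expansion this only reshuffles terms, but as stated your bookkeeping would need correction. Until items (i)--(iii) are supplied and the trilinear expansion is actually carried out, as in the proofs of Theorems~\ref{heightdifferencesurface} and~\ref{square}, the two displayed identities are not established.
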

As $\h'_\ll(F) \geq 0$ we immediately deduce from the first equation the following result.
\begin{cor} \label{effectivebog} Let $X$ be a smooth projective geometrically connected curve of genus $g \geq 2$ with semistable reduction over the number field $k$. Then the inequalities
\[ \pair{\hat{\omega},\hat{\omega}} \geq \frac{2}{3g-1} \sum_{v \in M(k)} \varphi(X_v)\log Nv > 0 \]
hold.
\end{cor}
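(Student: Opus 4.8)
The plan is to read the statement directly off the first formula in Theorem~\ref{squareintro}, using only the general positivity of N\'eron-Tate heights together with the known sign of the invariant $\varphi(X_v)$ at the various places of $k$.

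First I would invoke the first equality of Theorem~\ref{squareintro}, namely
\[ 12g(g-1)\,[k:\qq]\,\h'_\ll(F) = (3g-1)\,\pair{\hat{\omega},\hat{\omega}} - 2\sum_{v \in M(k)} \varphi(X_v)\log Nv \, . \]
Since $F = Z_{(1,-1)}$ is a closed subvariety of the abelian variety $J$, the inequality $\h'_\ll(F) \geq 0$ recalled in the introduction shows that the left hand side, and hence the right hand side, is non-negative. As $g \geq 2$ we have $3g-1 > 0$, so we may divide by it without reversing the inequality, which gives precisely $\pair{\hat{\omega},\hat{\omega}} \geq \frac{2}{3g-1}\sum_{v \in M(k)} \varphi(X_v)\log Nv$, the first asserted bound.

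It then remains to check that $\sum_{v \in M(k)} \varphi(X_v)\log Nv > 0$. Here I would split $M(k) = M(k)_0 \cup M(k)_\infty$. For $v \in M(k)_0$ the invariant $\varphi(X_v)$ is non-negative: Cinkir's inequality recalled above bounds it below by $c(g)\delta_0(X_v) + \sum_{i=1}^{[g/2]} \frac{2i(g-i)}{g}\delta_i(X_v)$, which is a non-negative combination of the non-negative integers $\delta_i(X_v)$ since $c(g) > 0$. For $v \in M(k)_\infty$ we have $\varphi(X_v) > 0$, this being one of the stated properties of $\varphi$ when $g \geq 2$; moreover $\log Nv = 1 > 0$ for such $v$, and $M(k)_\infty$ is non-empty. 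Summing, the global invariant is strictly positive, which yields the second bound and completes the proof.

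As for the main obstacle: there really is none at this point, since all the substance is already packed into Theorem~\ref{squareintro} (and through it into Theorem~\ref{mainintro} and the determination of the coefficients for $m=(1,-1)$). The only points requiring a moment's care are keeping track of the sign of $\varphi(X_v)$ separately in the archimedean and non-archimedean cases, and noting that division by $3g-1$ preserves the inequality because $g \geq 2$.
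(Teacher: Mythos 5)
Your proposal is correct and is essentially the paper's own argument: the corollary is deduced from the first identity of Theorem~\ref{squareintro} via $\h'_\ll(F)\geq 0$, and the strict positivity of $\sum_{v\in M(k)}\varphi(X_v)\log Nv$ is obtained exactly as you do, from $\varphi(X_v)>0$ at the archimedean places (for $g\geq 2$) together with $\varphi(X_v)\geq 0$ at the finite places (which the paper also backs up by Cinkir's bound). Nothing further is needed.
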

By equation (\ref{heightcurve}) we obtain from Corollary \ref{effectivebog} an effective Bogomolov-type result for $Z_{1,\alpha}$. More generally, whenever one has $a \geq 0$, $b \geq 0$ and $a+b >0$ in the identity of Theorem \ref{mainintro}, one finds by Corollary \ref{effectivebog} an effective Bogomolov-type result for $Z_{m,\alpha}$. Experimenting with the \verb+SAGE+ implementation of our algorithm has so far not led to an  improvement of the lower bound in Corollary~\ref{effectivebog}, and we believe that the lower bound in Corollary \ref{effectivebog} is in fact the best possible one that one can obtain by the methods described in this paper.

Upper bounds for $\h'_\ll(F)$ and $\h'_\ll(Z_{2,\alpha})$ play an important role in a recent work \cite{pa} by P. Parent. 
For example, a combination of Theorem \ref{squareintro} and Corollary \ref{effectivebog} yields the estimate
\[ \h'_\ll(Z_{2,\alpha}) \leq \frac{1}{[k : \qq]} \frac{g-2}{2(g-1)^2} \pair{\hat{\omega},\hat{\omega}} + \frac{4(g-2)}{g} \h'_\ll(x_\alpha) \, , \]
and then using equation (\ref{heightcurve}) we find
\[  \h'_\ll(Z_{2,\alpha}) \leq \frac{4(g-2)}{g-1} \, \h'_\ll(Z_{1,\alpha}) \, . \]
This estimate may be used to simplify some of the arguments in \cite[Section~5.1]{pa}. 

The next example that we consider is the case of a symmetric theta divisor inside~$J$. We note that such theta divisors can be identified with cycles $Z_{g-1,\alpha} \subset J$ where $\alpha \in \mathrm{Div}^1 X$ is such that $x_\alpha=0$, in particular we are dealing with the case that $r=g-1$. We find the following result.
\begin{thm} \label{thetaheight}
Let $X$ be a smooth projective geometrically connected curve of genus $g\geq 1$ with semistable reduction over a number field $k$, and denote by $(J,\lambda)$ its jacobian. Let $\Theta$ be an effective symmetric ample divisor on $J$ that defines the principal polarization $\lambda$, and put $\ll=\oo_J(\Theta)$. Then the identity
\[ 2g \,[k:\qq] \, \h'_\ll(\Theta) = \frac{1}{12} \pair{\hat{\omega},\hat{\omega}} + \frac{1}{6} \sum_{v \in M(k)} \varphi(X_v) \log Nv \]
holds.
\end{thm}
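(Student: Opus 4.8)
The plan is to realise $\Theta$ as one of the cycles $Z_{m,\alpha}$, apply Theorem~\ref{mainintro}, and then pin down the two relevant coefficients by running the argument behind that theorem in this special case. The case $g=1$ is degenerate: there $\varphi(X_v)=0$ for all $v$, $\pair{\hat\omega,\hat\omega}=0$ (the relative dualizing sheaf of a semistable curve of genus one has vanishing admissible self-intersection), and $\Theta$ is supported at a $2$-torsion point of the elliptic curve $J$, so all three terms of the asserted identity vanish. Assume $g\ge 2$ from now on. As the N\'eron--Tate height is insensitive to translation by torsion and any two symmetric theta divisors differ by such a translation, I may choose $\alpha\in\Div^1 X$ with $x_\alpha=0$, i.e.\ with $(2g-2)\alpha\sim K_X$ (such $\alpha$ exists after a finite extension of $k$, under which both sides of the claimed formula scale by the degree of the extension), and identify $\Theta$ with the integral cycle $Z_{g-1,\alpha}$ attached to the all-$1$ tuple $m=(1,\dots,1)$ of length $r=g-1$. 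Theorem~\ref{mainintro} then furnishes rational numbers $a=a(g)$ and $b=b(g)$ with
\[ \h'_\ll(\Theta)=\frac{1}{[k:\qq]}\Bigl(a\,\pair{\hat\omega,\hat\omega}+b\sum_{v\in M(k)}\varphi(X_v)\log Nv\Bigr), \]
the $c\,\h'_\ll(x_\alpha)$ term having disappeared. Everything is reduced to proving $a=\tfrac1{24g}$ and $b=\tfrac1{12g}$.

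To compute $a$ and $b$, I would follow the proof of Theorem~\ref{mainintro} in this case. Writing $f=f_{g-1,\alpha}\colon X^{g-1}\to J$, with $p_i\colon X^{g-1}\to X$ and $p_{ij}\colon X^{g-1}\to X^2$ the projections, $\Delta\subset X^2$ the diagonal and $u\colon X\to J$ the Abel--Jacobi map based at $\alpha$, the theorem of the cube gives $f^*\oo_J(\Theta)\cong\bigotimes_i p_i^*(u^*\oo_J(\Theta))\otimes\bigotimes_{i<j}p_{ij}^*((u\times u)^*\Lambda)$, where $\Lambda$ is the Mumford bundle $m^*\oo_J(\Theta)\otimes\mathrm{pr}_1^*\oo_J(\Theta)^{-1}\otimes\mathrm{pr}_2^*\oo_J(\Theta)^{-1}$ on $J\times J$. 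One computes $u^*\oo_J(\Theta)\cong\oo_X(g\alpha)$ and $(u\times u)^*\Lambda\cong\oo_{X^2}(-\Delta+p_1^*\alpha+p_2^*\alpha)$ (up to an irrelevant $2$-torsion ambiguity); collecting terms and using $(2g-2)\alpha\sim K_X$ yields the clean form
\[ f^*\oo_J(\Theta)\;\cong\;\bigotimes_{i=1}^{g-1}p_i^*\omega_X\;\otimes\;\oo_{X^{g-1}}\Bigl(-\sum_{1\le i<j\le g-1}p_{ij}^*\Delta\Bigr) \]
at the level of line bundles. The height $\h'_\ll(\Theta)$ is then, up to explicit rational normalising constants (built from $\dim\Theta+1=g$, $\deg_\ll\Theta=g!$ and the degree $(g-1)!$ of $f$ onto $\Theta$) and the factor $[k:\qq]$, the admissible arithmetic self-intersection $(\widehat{f^*\ll})^{\,g}$ on a regular model of $X^{g-1}$, where $\widehat{f^*\ll}$ carries the pullback of the canonical metric on $\oo_J(\Theta)$; this differs from the metrised version of the right-hand side above by local metric-comparison terms which, by Zhang's analysis, are governed precisely by the invariants $\varphi(X_v)$.

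It remains to expand this $g$-fold self-intersection by multilinearity, collapse each summand via the projection formula to an admissible intersection number on $X$ or on $X^2$ (or a low power thereof), and evaluate. The summands carrying a factor supported on the $p_i^*\omega_X$ reassemble, using $x_\alpha=0$, into multiples of $\h'_\ll(x_\alpha)=0$ and of $\pair{\hat\omega,\hat\omega}$, while the summands built from the diagonal classes $p_{ij}^*\Delta$ are evaluated by the computation of Zhang \cite{zhgs} (and Wilms \cite{wi}) of admissible diagonal intersection numbers on cartesian powers of $X$ --- the analogue, for this particular combination of diagonals, of \cite[Theorem~1.3.1]{zhgs} --- which outputs exactly linear combinations of $\pair{\hat\omega,\hat\omega}$ and $\sum_v\varphi(X_v)\log Nv$. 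Carrying out the binomial bookkeeping gives $a$ and $b$ as explicit functions of $g$, which one then simplifies to $\tfrac1{24g}$ and $\tfrac1{12g}$; as a check, for $g=2$ one has $\Theta=Z_{1,\alpha}$, and the resulting identity must agree with \eqref{heightcurve} once the hyperelliptic relation $\pair{\hat\omega,\hat\omega}=\tfrac{2g-2}{2g+1}\sum_v\varphi(X_v)\log Nv$ is inserted.

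The main obstacle is this final step: tracking the combinatorics of the multilinear expansion and, above all, reproducing in the exact form needed Zhang's reduction of admissible diagonal intersection numbers, so as to fix the coefficient of $\sum_v\varphi(X_v)\log Nv$. A secondary point requiring care is the metrised comparison of $f^*\oo_J(\Theta)$ with the admissible line bundles on the factors, in particular matching the admissible metric on $\oo_X(\alpha)$ against the canonical metric on $\hat\omega$ under $(2g-2)\alpha\sim K_X$.
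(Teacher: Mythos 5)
Your reduction is set up exactly as in the paper: for $g\ge 2$ one writes $\Theta=Z_{g-1,\alpha}$ with $(2g-2)\alpha\sim K_X$ (no translation argument is even needed, since every symmetric theta divisor arises this way from a theta characteristic), applies Theorem~\ref{main}/Theorem~\ref{mainintro}, and is left with determining two rational numbers depending only on $g$; your target values $a=\tfrac1{24g}$, $b=\tfrac1{12g}$ and your $g=2$ consistency check are correct. But the proof stops precisely where the real work begins. The step you defer as ``binomial bookkeeping'' together with ``Zhang's reduction of admissible diagonal intersection numbers'' is not available in the form you invoke: \cite[Theorem~1.3.1]{zhgs} concerns the Gross--Schoen cycle on $X^3$, not the $g$-fold self-intersection of $\sum_i\hat\omega_i-\sum_{i<j}\hat\Delta_{ij}$ on $X^{g-1}$, and the paper itself observes that closed expressions for the coefficients of Theorem~\ref{mainintro} are not straightforward for general $m$ and $g$. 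So the two coefficients are asserted rather than derived, and no mechanism in your proposal actually produces them.

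The paper closes this gap with two devices you do not use. First, only the coefficient of $\langle\hat\Delta,\hat\Delta,\hat\Delta\rangle$ is computed combinatorially, and even that is imported from Wilms: \cite[Lemma~5.1(b)]{wi} and \cite[Lemma~5.2]{wi} give the value $-g!(g-1)!/12$, which after Proposition~\ref{triplediagonal} yields the coefficient $\tfrac16$ of $\sum_v\varphi(X_v)\log Nv$. Second, the coefficient of $\pair{\hat\omega,\hat\omega}$ is never touched by combinatorics at all: since one knows a priori that $2g\,[k:\qq]\,\h'_\ll(\Theta)=a\,\pair{\hat\omega,\hat\omega}+\tfrac16\sum_v\varphi(X_v)\log Nv$ with $a$ universal in $g$, one specializes to a curve of genus $g$ whose jacobian has everywhere good reduction (such curves exist by Moret-Bailly \cite{mbskolem}); in that case the identity with $a=\tfrac1{12}$ is already known from Corollary~\ref{realcatalyzer}, which rests on Autissier's formula (\ref{aut}), the Noether formula, and Wilms's identity (Theorem~\ref{wilmsrewrite}), and strict positivity of $\pair{\hat\omega,\hat\omega}$ then forces $a=\tfrac1{12}$ universally. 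Your plan makes no use of the good-reduction case or of Autissier's result, so even granting unlimited patience with the multilinear expansion you would still have to reproduce, from scratch, the combinatorial content of Wilms's lemmas for both coefficients --- exactly the obstacle you flag but do not overcome.
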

More generally, let $(A,\lambda)$ be a principally polarized abelian variety over $\qbar$ of dimension $g \geq 1$. Then one has naturally associated to $(A,\lambda)$ the N\'eron-Tate height $\h'_\ll(\Theta)$ of any symmetric effective divisor $\Theta$ defining $\lambda$ on $A$, where $\ll=\oo_A(\Theta)$. Another natural invariant associated to $(A,\lambda)$ is the stable Faltings height $\h_F(A)$ as introduced in \cite{famordell}. It is natural to ask in general how $\h'_\ll(\Theta)$ and $\h_F(A)$ are related. This question has been investigated by P. Autissier in his paper \cite{aut}. Before we can state Autissier's main result, we need to introduce some more notation. 

Assume that $A$ and $\ll$ are defined over a number field $k \subset \qbar$, and assume that an admissible adelic metric $(\|\cdot\|_v)_{v \in M(k)}$ has been chosen on $\ll$ (see Section \ref{admissible} for definitions). Let $s$ be any non-zero global section of $\ll$ on $A$. For each $v \in M(k)_\infty$ we then put
\[ I(A_v,\lambda_v) = -\int_{A_v(\cc)} \log \|s\|_v \, \mu_v + \frac{1}{2} \log \int_{A_v(\cc)} \|s\|_v^2 \, \mu_v \, ,\]
where $\mu_v$ denotes the Haar measure on the complex torus $A_v(\cc)$. The real-valued local invariant $I(A_v,\lambda_v)$ is independent of the choice of $s$ and $\ll$, and an application of Jensen's inequality yields the strict inequality $ I(A_v,\lambda_v) > 0 $.

The main result of \cite{aut} is then the following. As above let $\h_F(A)$ denote the stable Faltings height of $A$, and put $\kappa_0 = \log(\pi \sqrt{2})$. Assume that $A$ has good reduction over the number field $k$. Then one has the identity
\begin{equation} \label{aut} \h_F(A) = 2g \, \h'_\ll(\Theta) - \kappa_0 g + \frac{2}{[k:\qq]} \sum_{v \in M(k)_\infty} I(A_v,\lambda_v) \log Nv  
\end{equation}
in $\rr$. 

Autissier's result is based on the ``key formula'' for principally polarized abelian schemes. Naturally, one wants to remove the condition that $A$ has (potentially) everywhere good reduction. In fact, Autissier proposes in \cite[Question]{aut} that for  
\emph{any} principally polarized abelian variety $(A,\lambda)$ of dimension $g \geq 1$ with semistable reduction over some number field $k$ one should have a natural identity of the type
\begin{equation} \label{autissconj} \begin{split} \h_F(A) = &\,\, 2g \, \h'_\ll(\Theta) +\frac{1}{[k:\qq]} \sum_{v \in M(k)_0} \alpha_v \log Nv \\ & - \kappa_0 g  + \frac{2}{[k:\qq]} \sum_{v \in M(k)_\infty} I(A_v,\lambda_v) \log Nv 
\end{split} \end{equation}
where $\alpha_v$ is a non-negative rational number that can be calculated from the reduction of $A$ at $v$, with $\alpha_v=0$ if $A$ has good reduction at $v$. 

In \cite{aut} Autissier proved his proposed formula (\ref{autissconj}) for all principally polarized abelian varieties of dimension one or two, and for  arbitrary products of such. In these cases he was also able to give an explicit description for the local non-archimedean factors~$\alpha_v$. Our Theorem \ref{autforjac} below will give identity (\ref{autissconj}) for (products of) arbitrary jacobians, with an explicit description of the $\alpha_v$ in terms of the reduction graph of the underlying curves at $v$. 

Let $v \in M(k)$ be a finite place. We recall \cite{zhadm} that the dual graph of the reduction of $X$ at $v$ is naturally a metrized graph. Let $\Gamma$ be any metrized graph. Then viewing $\Gamma$ as an electric circuit we have for each pair $x$, $y$ of points of $\Gamma$ the effective resistance $r(x,y)$ between~$x$ and~$y$. Taking $x$ as a base point the function $r(x,y)$ is piecewise quadratic in $y \in \Gamma$ and has a well-defined Laplacian $\Delta_y \,r(x,y)$ in the sense of \cite[Appendix]{zhadm}. We put
\begin{equation} \label{crmeasure}   \mu_{\mathrm{can}}(y)  = \frac{1}{2} \,\Delta_y \,r(x,y) + \delta_x(y) \, . 
\end{equation}
Then $\mu_{\mathrm{can}}$ is a signed measure on $\Gamma$, independent of the choice of $x \in \Gamma$. The canonical measure $\mu_{\mathrm{can}}$ was first introduced and studied by T.~Chinburg and R.~Rumely \cite{cr} who also give an explicit formula for it. We write $\tau(\Gamma)$ for the natural real-valued invariant 
\begin{equation} \label{tauinv} \tau(\Gamma) =  \frac{1}{2} \int_\Gamma r(x,y) \, \mu_{\mathrm{can}}(y)  
\end{equation}
of $\Gamma$, which can also be shown \cite{cr} to be independent of the choice of $x \in \Gamma$. The invariant $\tau(\Gamma)$ and its applications in arithmetic geometry have been studied in various papers, including \cite{bffourier} \cite{br} \cite{cr} \cite{ciadm} \cite{ciop}.

We note that $\tau(\Gamma)$ is rational if in some model of $\Gamma$ all edges have rational lengths. When $X$ is a smooth projective geometrically connected curve with semistable reduction over a number field $k$ and $v \in M(k)$ is a finite place, we simply write $\tau(X_v)$ for the tau-invariant of the dual graph of the reduction of $X$ at $v$. In a similar vein we write $\delta(X_v) = \sum_{i=0}^{[g/2]} \delta_i(X_v)$ for the total length of the dual graph at $v$, or equivalently the number of geometric singular points on the reduction of $X$ at $v$. Both $\tau(X_v)$ and $\delta(X_v)$ are non-negative rational numbers, and vanish if $X$ has good reduction at $v$.

With these definitions we have the following result, answering Autissier's question in the affirmative for jacobians.
\begin{thm} \label{autforjac} Let $X$ be a smooth projective geometrically connected curve of genus $g \geq 1$ with semistable reduction over a number field $k$. Let $(J,\lambda)$ be the jacobian of $X$. 
For each $v \in M(k)_0$ we define the rational number 
\[ \alpha_v = \frac{1}{8}\delta(X_v) - \frac{1}{2} \tau(X_v) \, .  \]
Then for each $v \in M(k)_0$ we have $\alpha_v \geq 0$. Moreover we have $\alpha_v=0$ if and only if the jacobian $J$ has good reduction at $v$. Finally the identity
\[ \begin{split} \h_F(J) = &\,\, 2g \, \h'_\ll(\Theta) +\frac{1}{[k:\qq]} \sum_{v \in M(k)_0} \alpha_v \log Nv \\ & - \kappa_0 g + \frac{2}{[k:\qq]} \sum_{v \in M(k)_\infty} I(J_v,\lambda_v) \log Nv \end{split} \]
holds in $\rr$.
\end{thm}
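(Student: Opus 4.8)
The plan is to deduce the displayed identity from Theorem~\ref{thetaheight} combined with the arithmetic Noether formula, and to extract the non-negativity of $\alpha_v$ and the description of its vanishing locus from the theory of metrized graphs. Recall that Autissier's formula (\ref{aut}) rests on the key formula for abelian schemes together with the good reduction hypothesis; the extra ingredient needed to reach bad reduction is the arithmetic Noether formula for arithmetic surfaces.

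First I would reduce everything to the relative dualizing sheaf. Replacing $k$ by a finite extension over which $X$ has a regular semistable model $\pi\colon\mathcal{X}\to\Spec\oo_k$ is harmless, since each term of the proposed identity scales by the degree of the extension; and by Raynaud's theorem $\det\pi_*\omega_{\mathcal{X}/\oo_k}$ is canonically the module of invariant differentials of the identity component of the N\'eron model of $J$, so that $\h_F(J)=\frac{1}{[k:\qq]}\widehat{\deg}\det\pi_*\omega_{\mathcal{X}/\oo_k}$ for a suitable metric at the archimedean places — the discrepancy between this metric and the $L^2$-metric being a universal multiple of $g$, which will ultimately supply the constant $-\kappa_0 g$. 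The arithmetic Noether formula then expresses $\widehat{\deg}\det\pi_*\omega_{\mathcal{X}/\oo_k}$ as a universal $\qq$-linear combination of the Arakelov self-intersection $\pair{\omega,\omega}_{\Ar}$, the non-archimedean sum $\sum_{v\in M(k)_0}\delta(X_v)\log Nv$, the archimedean sum $\sum_{v\in M(k)_\infty}\delta_F(X_v)$ of Faltings delta invariants, and $[k:\qq]$ times a constant. Substituting Zhang's comparison $\pair{\omega,\omega}_{\Ar}=\pair{\hat\omega,\hat\omega}+\sum_{v\in M(k)_0}\epsilon(X_v)\log Nv$ from \cite{zhadm}, and then Theorem~\ref{thetaheight} to eliminate $\pair{\hat\omega,\hat\omega}$, I arrive at an identity of the shape
\[
\h_F(J)=2g\,\h'_\ll(\Theta)+\frac{1}{[k:\qq]}\sum_{v\in M(k)_0}\beta_v\log Nv+\frac{1}{[k:\qq]}\sum_{v\in M(k)_\infty}\gamma_v\log Nv+c\,g\,,
\]
where $c$ is a universal constant, $\beta_v$ is a universal $\qq$-linear combination of $\epsilon(X_v)$, $\delta(X_v)$ and $\varphi(X_v)$, and $\gamma_v$ is a universal $\qq$-linear combination of $\delta_F(X_v)$ and $\varphi(X_v)$ plus a multiple of $g$.

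It then remains to identify the local terms. At the archimedean places one invokes the known identity relating the Faltings delta invariant, Kawazumi's invariant $\varphi(X_v)$ and the invariant $I(J_v,\lambda_v)$ of the Jacobian (a theorem of de Jong and Wilms, cf.\ \cite{wi}): this turns $\gamma_v$ into $2\,I(J_v,\lambda_v)$ up to a further multiple of $g$, and assembling the constants coming from the Noether formula, from the metric comparison, and from this archimedean identity, the net $g$-contribution works out to $-\kappa_0 g$, in agreement — as a check — with (\ref{aut}) in the case where $J$ has everywhere good reduction (a case in which, as we shall see, all $\beta_v$ vanish). At the non-archimedean places one must check that $\beta_v=\frac18\delta(X_v)-\frac12\tau(X_v)=\alpha_v$; this amounts to a universal relation between $\epsilon$, $\delta$, $\varphi$ and $\tau$ for polarized metrized graphs, which follows from the explicit formulas of Zhang \cite{zhadm} and Cinkir \cite{ciadm} expressing $\varphi$ and $\epsilon$ through $\tau$, $\delta$ and the genus weights and first Betti number of the graph. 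This proves the displayed identity, and at the same time shows $\alpha_v$ to be rational, being a $\qq$-linear combination of the rational quantities $\delta(X_v)$ and $\tau(X_v)$.

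Finally, $\alpha_v\ge 0$ and the description of its vanishing are purely combinatorial facts about the dual graph $\Gamma_v$: I would prove that $\tau(\Gamma)\le\frac14\delta(\Gamma)$ for every metrized graph $\Gamma$ (with $\delta(\Gamma)$ the total length), with equality precisely when the first Betti number $b_1(\Gamma)$ vanishes. Since $\tau$ and $\delta$ are both additive under the insertion of bridges and pendant segments, this reduces to bridgeless graphs, for which one verifies that any circuit pushes $\tau/\delta$ strictly below $1/4$ (already a single loop of length $\ell$ has $\tau=\ell/12$); compare also \cite{cr} \cite{ciadm}. As $b_1(\Gamma_v)$ is exactly the toric rank of the reduction of $J$ at $v$, one has $b_1(\Gamma_v)=0$ if and only if $J$ has good reduction at $v$, and the two remaining assertions of the theorem follow. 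I expect the principal difficulty to lie not in any single conceptual step but in the bookkeeping of normalizations — of the N\'eron-Tate height, the Faltings height, Zhang's admissible pairing, and the Faltings delta invariant — required to make the local contributions collapse to exactly $\alpha_v\log Nv$ and the global constant to exactly $-\kappa_0 g$; a secondary point, if one wants a self-contained treatment, is the metrized-graph inequality $\tau(\Gamma)\le\delta(\Gamma)/4$.
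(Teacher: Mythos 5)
Your proposal is correct and follows essentially the same route as the paper: combining the arithmetic Noether formula, Zhang's comparison $\pair{\bar{\omega},\bar{\omega}}=\pair{\hat{\omega},\hat{\omega}}+\sum_{v}\epsilon(X_v)\log Nv$, Wilms's archimedean identity, and Theorem \ref{thetaheight} to eliminate $\pair{\hat{\omega},\hat{\omega}}$ produces exactly the paper's intermediate identity (\ref{presymmetric}), with local term $\tfrac{1}{12}(\delta(X_v)+\vareps(X_v)-2\varphi(X_v))$ at each finite place and the archimedean $\varphi$-terms cancelling, after which everything rests on the metrized-graph statements. The one genuine divergence is how you settle the graph identity $\tfrac{1}{12}(\delta+\vareps-2\varphi)=\tfrac18\delta-\tfrac12\tau$: you invoke explicit formulas of Cinkir, whereas the paper (Proposition \ref{graph1}) proves it directly from Moriwaki's formula $\vareps=2g\,g_\mu(x,K)+r(x,K)$ and $r(x,y)=g_\mu(x,x)-2g_\mu(x,y)+g_\mu(y,y)$. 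Your route does work, since Cinkir's formulas $\varphi=\frac{5g-2}{g}\tau+\frac{\theta}{4g}-\frac{\delta}{4}$ and $\vareps=\frac{4(g-1)}{g}\tau+\frac{\theta}{2g}$ yield the identity with the auxiliary $\theta$-invariant cancelling; but be aware that these formulas involve $\theta(\Gamma)$ (resistances weighted by the canonical divisor), not merely ``genus weights and the first Betti number'', and the relevant references are \cite{ciinv} and Cinkir's tau-constant papers rather than \cite{ciadm}. For $\tau\le\tfrac14\delta$ with equality exactly for trees, your additivity-plus-circuits sketch is incomplete as written (the bridgeless case needs an actual argument, e.g.\ Cinkir's explicit formula for $\tau$), but since the inequality is in the literature you and the paper both cite (\cite{br}, \cite{ciop}) this is not a gap; the identification of good reduction of $J$ at $v$ with the dual graph being a tree, and the constant bookkeeping (cross-checked against Autissier's formula in the good reduction case), are handled as in the paper. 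One small slip: the archimedean identity you use is Wilms's theorem (Theorem \ref{wilmsrewrite}), not a joint result.
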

We note that the inequality $\delta(\Gamma) \geq 4\tau(\Gamma)$ for metrized graphs $\Gamma$ is due to Rumely and has been known for some time \cite[Equation (14.3)]{br} \cite[Equation (10)]{ciop}. 

Our proofs of Theorems \ref{thetaheight} and \ref{autforjac} rely on Autissier's result (\ref{aut}) in the potentially everywhere good reduction case, the Noether formula for arithmetic surfaces, and the remarkable identity 
\[ \delta_F(X_v) - 4g\log(2\pi)= -12\,\kappa_0 g + 24\,I(J_v,\lambda_v)+ 2\,\varphi(X_v)  \]
for $v \in M(k)_\infty$ found recently by R. Wilms in the already quoted paper \cite{wi}, expressing the Faltings delta-invariant $\delta_F(X_v)$ from \cite[p.~401]{fa} in terms of the invariants $I(J_v,\lambda_v)$ and $\varphi(X_v)$. \\

The plan of this paper is as follows. In Section \ref{prelims} we review the necessary notions and results from intersection theory of integrable adelic line bundles from Zhang's paper \cite{zhsmall}. We will assume that the reader is familiar with arithmetic intersection theory in the style of H. Gillet and C. Soul\'e \cite{bgs} \cite{giso}. In Section \ref{sec:projection} we show how several projection formulae from \cite{bgs} \cite{giso} generalize to the context of integrable line bundles. These results seem to be of independent interest. We include a study of the Deligne pairing of integrable line bundles along flat projective morphisms.

In Section \ref{admissible} we consider a special class of integrable line bundles on curves and polarized abelian varieties, namely the admissible bundles. On curves, the admissible line bundles are precisely those constructed in \cite{zhadm} by considering Green's functions on the reduction graph of a curve. We will see how the N\'eron-Tate height of a cycle on a polarized abelian variety can be expressed in terms of admissible intersection theory. In Section \ref{phi_explained} we review from \cite{zhgs} the $\varphi$-invariant and some of its properties. 

From Section \ref{NTheightspecial} we start with actual computations. We first derive in Theorem~\ref{main} a general formula for the N\'eron-Tate height of a tautological integral cycle in terms of Deligne pairings of suitable integrable line bundles. In Section \ref{proofmain} we then deduce Theorem \ref{mainintro} from Theorem \ref{main}. We focus on the particular case of two-dimensional cycles in Section \ref{surfaces}, leading to a proof of Theorem \ref{squareintro}. Finally in Section \ref{proofautforjac} we prove Theorems \ref{thetaheight} and \ref{autforjac} about the symmetric theta divisor.

In this paper, a variety is an integral scheme of finite type over a field, and a curve is a variety of dimension one.\\

\noindent {\sc Acknowledgements}---I would like to thank David Holmes for valuable remarks and discussions, and for implementing the method described in this paper in \verb+SAGE+. 

\section{Integrable line bundles} \label{prelims}

We recall the notion of integrable adelic line bundle from \cite{zhsmall} and discuss the arithmetic intersection number of a tuple of integrable line bundles. In particular this leads to the notion of height of a closed subvariety with respect to an ample integrable line bundle, cf. Definition \ref{height}. Although we work mainly from \cite{zhsmall} we also recommend \cite{cl} \cite{clmeas} for a different perspective on the material of this section (and the next ones), making a more consistent use of Berkovich analytic spaces to deal with the metrics at the finite places. 
We assume throughout that the reader is familiar with the basics of arithmetic intersection theory in the style of Gillet-Soul\'e \cite{bgs} \cite{giso}.  A word on terminology: what is here and in \cite{zhsmall} called an algebraic line bundle corresponds to what is called a smooth line bundle in \cite{cl}, and what is here and in \cite{zhsmall} called an integrable line bundle corresponds to what is called an admissible line bundle in \cite{cl}.

Let $(K,|\cdot|)$ be a local field, and let $\Kbar$ be an algebraic closure of $K$. Let $X$ be a projective variety over $\Kbar$, and let $\ll$ be a line bundle on $X$. A continuous metric $\| \cdot \|$ on $\ll$ is the datum of a continuously varying family of $\Kbar$-norms $\|\cdot\|_x$ on the fibers $x^*\ll$ of $\ll$, where $x$ moves through the topological space $X(\Kbar)$. If $K$ is archimedean, we always assume that $(\ll,\|\cdot\|)$ is the pullback, along a closed immersion $X \to Y$ of $X$ into a smooth projective variety $Y$, of a smooth hermitian line bundle on the complex manifold $Y(\Kbar)$. In particular, we have a curvature current $c_1(\ll,\|\cdot\|)$ on $X(\Kbar)$. 

If $K$ is non-archimedean, an important class of continuously metrized line bundles on $X$ can be defined via models of $X$ over the ring of integers $R$ of $\Kbar$ as follows. In general, when $\tilde{L}$ is a locally free $R$-module of rank one, we have a natural $\Kbar$-norm $\| \cdot \|_{\tilde{L}}$ on the $\Kbar$-vector space $L = \tilde{L} \otimes_R \Kbar$ by putting $ \| \ell \|_{\tilde{L}} = \inf_{a \in \Kbar} \{ |a| : \ell \in a \tilde{L} \}  $
for all $\ell \in L$. Now let $S = \Spec R$, and let $\pi \colon \tilde{X} \to S$ be a projective flat morphism with generic fiber $X$. Let $\ll$ be a line bundle on $X$, and let $\tilde{\ll}$ be a line bundle on $\tilde{X}$ whose restriction to $X$ is equal to $\ll^{\otimes e}$, for some $e \in \zz_{>0}$. We call $(\tilde{X},\tilde{\ll})$ a model for $(X,\ll^{\otimes e})$. For $x \in X(\Kbar)$ let $\tilde{x} \in \tilde{X}(S)$ be the unique section of $\pi$ that extends $x$ (recall that $\pi$ is projective). Then $\tilde{x}^*\tilde{\ll}$ is a locally free $R$-module of rank one satisfying $\tilde{x}^*\tilde{\ll} \otimes_R \Kbar = x^*\ll^{\otimes e}$ and for $\ell \in x^*\ll$ we then put $\| \ell \|_{\tilde{\ll}} = \| \ell \|_{\tilde{x}^*\tilde{\ll}}^{1/e}$. We say that the continuous metric $\|\cdot\|_{\tilde{\ll}}$ on $\ll$ is algebraic induced by the model $(\tilde{X},\tilde{\ll})$ of $(X,\ll^{\otimes e})$.

Next, let $k$ be a number field. Let $M(k)_0$ denote the set of finite places of $k$, $M(k)_\infty$ the set of complex embeddings of $k$, and write $M(k)= M(k)_0 \cup M(k)_\infty$. For $v \in M(k)$ let $k_v$ denote the completion of $k$ at $v$, in particular we have for each $v \in M(k)$ a canonical embedding $k \to k_v$. For $v \in M(k)_0$ let $p$ be the unique prime of $\zz$ such that  $v$ divides $p$. Then we endow $k_v$ with the unique absolute value $|\cdot|_v$ such that $|p|_v=p^{-[k_v : \qq_p]}$. For $v \in M(k)_\infty$ we put the usual euclidean norm on $k_v$. The collection of absolute values thus obtained for all $v \in M(k)$ satisfies the product formula: for all $a \in k^\times$ we have $\sum_{v \in M(k)} \log |a|_v = 0$.

Let $X$ be a projective variety over $k$, and let $\ll$ be a line bundle on $X$. For each $v \in M(k)$ we denote $X_v=X \otimes \bar{k}_v$ and $\ll_v = \ll \otimes \bar{k}_v$. An adelic metric on $\ll$ is a collection of continuous metrics $\|\cdot\|_v$ on $\ll_v$ on $X_v$ indexed by $v \in M(k)$. Let $e \in \zz_{>0}$. Let $R$ be the ring of integers of $k$. By a model of $(X,\ll^{\otimes e})$ we mean a pair $(\tilde{X},\bar{\ll})$ where $\tilde{X}$ is a projective flat model of $X$ over $R$ and $\bar{\ll}=(\tilde{\ll},(\|\cdot\|_v)_{v \in M(k)_\infty})$ is a smooth hermitian line bundle on $\tilde{X}$ whose underlying line bundle $\tilde{\ll}$ coincides with $\ll^{\otimes e}$ over $X$. A model of $(X,\ll^{\otimes e})$ gives naturally, using for $v \in M(k)_0$ the construction described before, an adelic metric $(\|\cdot\|_v)_{v \in M(k)}$ on $\ll$.

We say that an adelic metrized line bundle $\hat{\ll}=(\ll,(\| \cdot \|_v)_{v \in M(k)})$ on $X$ is approximated by models $(\tilde{X}_i,\bar{\ll}_i)$ of $(X,\ll^{\otimes e_i})$ for $i=1,2,\ldots$ if for all but finitely many $v \in M(k)$ the metric $\|\cdot\|_{i,v}$ induced by $(\tilde{X}_i,\bar{\ll}_i)$ is independent of $i$ and if for all $v\in M(k)$ the functions $\log( \| \cdot \|_{i,v}/\| \cdot \|_v)$ on $X(\bar{k}_v)$ converge uniformly to zero. 
\begin{remark}
Let $(\tilde{X}_i,\bar{\ll}_i)$ be a sequence of models approximating the adelic line bundle $\hat{\ll}$, and assume that for each $i=1,2,\ldots$ we have a birational morphism $\varphi_i \colon \tilde{X}'_i \to \tilde{X}_i$ of models which restricts to an isomorphism over $k$. Then the sequence of models
$(\tilde{X}'_i,\varphi_i^* \bar{\ll}_i)$ also approximates $\hat{\ll}$. Indeed, write $S =\Spec R$ then each section $x_i \colon S \to \tilde{X}_i$ lifts uniquely to a section $x'_i \colon S \to \tilde{X}'_i$ by properness, and then $(x'_i)^* \varphi_i^* \tilde{\ll}_i = x_i^* \tilde{\ll}_i$ so that for each $i=1,2,\ldots$ the algebraic metrics defined by $\tilde{\ll}_i$ and $\tilde{\ll}'_i$ on $\ll$ are the same. In particular, we may assume the models $\tilde{X}_i$ to be normal.
\end{remark}
\begin{remark} Let $(\tilde{X}_{ij},\bar{\ll}_{ij})$ for $j=0,\ldots,r$ be sequences of models approximating the adelic line bundles $\hat{\ll}_0,\ldots,\hat{\ll}_r$. Let $\tilde{X}_i $ be the Zariski closure of the small diagonal of $X \times_k \cdots \times_k X$ inside $ \tilde{X}_{i0} \times_S \cdots \times_S \tilde{X}_{ir}$ for $i=1,2, \ldots$, and denote by $\bar{\ll}'_{ij}$ the restriction to $\tilde{X}_i$ of the pullback of $\bar{\ll}_{ij}$ along the projection onto $\tilde{X}_{ij}$. Then for each $j=0,\ldots,r$ the sequence of models $(\tilde{X}_i,\bar{\ll}'_{ij})$ also approximates $\hat{\ll}_j$.
\end{remark}
Let $\tilde{X}$ be a projective flat scheme over $R$.
We say that a smooth hermitian line bundle $\bar{\ll}=(\tilde{\ll},(\|\cdot\|_v)_{v \in M(k)_\infty})$ on $\tilde{X}$ is semipositive (resp. ample) if $\tilde{\ll}$ is relatively semipositive (resp. relatively ample), and each metric $\|\cdot\|_v$ for $v \in M(k)_\infty$ is the restriction of a smooth hermitian metric with semipositive (resp. positive) curvature form. Let $X$ be a projective geometrically irreducible variety over $k$, then an adelic metrized line bundle $\hat{\ll}=(\ll,(\| \cdot \|_v)_v)$ on $X$ is said to be semipositive (resp. ample) if $\hat{\ll}$ is approximated by models $(\tilde{X}_i,\bar{\ll}_i)$ of $(X,\ll^{\otimes e_i})$ with each $\bar{\ll}_i$ semipositive (resp. ample). Finally we say that the adelic line bundle $\hat{\ll}$ on $X$ is integrable if there exist two semipositive adelic line bundles $\hat{\ll}_1$, $\hat{\ll}_2$ on $X$ such that $\hat{\ll}=\hat{\ll}_1 \otimes \hat{\ll}_2^{\otimes -1}$. 
\begin{remark} Let $f \colon Y \to X$ be a morphism of projective varieties over $k$, and let $\hat{\ll}$ be an integrable line bundle on $X$. Then $f^*\hat{\ll}$ is an integrable line bundle on $Y$. Indeed, for each model $\tilde{X}$ of $X$ there exists a model $\tilde{Y}$ of $Y$ together with a morphism $\tilde{f} \colon \tilde{Y} \to \tilde{X}$ extending $f$. Then if $\bar{\ll}$ is a semipositive smooth hermitian line bundle on $\tilde{X}$, we have that $\tilde{f}^*\bar{\ll}$ is a semipositive smooth hermitian line bundle on $\tilde{Y}$.
\end{remark}
For a projective flat scheme $\tilde{X}$ over $R$ with smooth generic fiber we denote by $\widehat{CH}^i(\tilde{X})$ the arithmetic Chow group of $\tilde{X}$ in degree $i$, cf. \cite[Section~3.3]{giso}. We write $S = \Spec R$. By \cite[Section 2.1]{bgs} we have canonical maps 
\[ \deg \colon \widehat{CH}^0(S) \isom \zz\, , \quad \widehat{\deg} \colon \widehat{CH}^1(S) \to \rr \, . \]
The arithmetic Chow groups $\widehat{CH}^i(S)$ vanish for $i \geq 2$. Let $\tilde{Z} \in \mathrm{Z}_{d+1}(\tilde{X})$ be an integral closed subscheme of dimension $d+1$ on $\tilde{X}$. Then for each collection $\bar{\ll}_0,\ldots,\bar{\ll}_r$ of smooth hermitian line bundles on $\tilde{X}$ we have by \cite[Section 2.3]{bgs} an element 
\[ \langle \bar{\ll}_0,\ldots,\bar{\ll}_r | \tilde{Z} \rangle 
= ( \hat{c}_1(\bar{\ll}_0)\cdots \hat{c}_1(\bar{\ll}_r) | \tilde{Z} ) \]
in $\widehat{CH}^{r+1-d}(S)$. If $d \in \{ r,r+1\}$ we obtain, by taking $\widehat{\deg}$ or $\deg$, an element of $\rr$ or $\zz$, for which we use the same notation. If $\tilde{X}$ is regular, the element $\langle \bar{\ll}_0,\ldots,\bar{\ll}_r | \tilde{Z} \rangle$ in $\widehat{CH}^{r+1-d}(S)$ is constructed in \cite[Section 2.3]{bgs} using arithmetic intersection theory on $\tilde{X}$. If $\tilde{X}$ is not necessarily regular, one way of obtaining $\langle \bar{\ll}_0,\ldots,\bar{\ll}_r | \tilde{Z} \rangle$ is to use the existence of a smooth morphism $\tilde{X}' \to S$, a closed immersion $j \colon \tilde{X} \to \tilde{X}'$ over $S$, and smooth hermitian line bundles $\bar{\ll}'_0,\ldots,\bar{\ll}'_r$ on $\tilde{X}'$ together with isometries $\bar{\ll}_i \isom j^*\bar{\ll}'_i$ for $i=0,\ldots,r$, and to put $\langle \bar{\ll}_0,\ldots,\bar{\ll}_r | \tilde{Z} \rangle =\langle \bar{\ll}'_0,\ldots,\bar{\ll}'_r|j_*(\tilde{Z})\rangle$. It can be shown that the result is independent of the choices of $j$ and $\bar{\ll}'_i$. We refer to \cite[Section 2.3.1, Remark (ii)]{bgs} and \cite[Section 3.2.1, Remark]{bgs} for details. 
The intersection number  $\langle \bar{\ll}_0,\ldots,\bar{\ll}_r | \tilde{Z} \rangle$ is symmetric and multilinear in $\bar{\ll}_0,\ldots,\bar{\ll}_r $.  Moreover, when $d = r+1$ we have
\[ \langle \bar{\ll}_0,\ldots,\bar{\ll}_r | \tilde{Z} \rangle = c_1(\tilde{\ll}_0|_X)\cdots c_1(\tilde{\ll}_r|_X)[Z] \in \zz \, ,  \]
where $Z, X$ denote the generic fibers of $\tilde{Z}, \tilde{X}$.

We can extend these arithmetic intersection numbers to the integrable adelic case as follows. Let $X$ be a smooth projective variety over $k$ and let $\hat{\ll}_0,\ldots,\hat{\ll}_r$ be semipositive adelic line bundles on $X$ approximated by semipositive models $(\tilde{X}_i,\bar{\ll}_{ij})$ of $(X,\ll^{\otimes e_{ij}})$. Let $Z \in \mathrm{Z}_d(X)$ be an integral cycle of dimension $d$ on $X$ and let $\tilde{Z}_i$ be the Zariski closure of $Z$ in $\tilde{X}_i$.  By \cite[Theorem~1.4]{zhsmall} the sequence
$ \langle \bar{\ll}_{i0},\ldots,\bar{\ll}_{ir} | \tilde{Z}_i \rangle/e_{i0}\cdots e_{ir} $
converges, with limit independent of choice of models. We denote the limit by
$ \langle \hat{\ll}_0,\ldots,\hat{\ll}_r | Z \rangle$.
By writing an arbitrary integrable line bundle as a quotient of two semipositive line bundles, the definition of $\langle \hat{\ll}_0,\ldots,\hat{\ll}_r | Z \rangle$ is extended to arbitrary tuples $(\hat{\ll}_0,\ldots,\hat{\ll}_r)$ of integrable line bundles on $X$. When $\hat{\ll}_0=\ldots=\hat{\ll}_r=\hat{\ll}$ we often simply write $\langle \hat{\ll}^{r+1}|Z \rangle$, and 
when $Z=X$ we often write $\langle \hat{\ll}_0,\ldots,\hat{\ll}_r \rangle $. The intersection number  $\langle\hat{\ll}_0,\ldots,\hat{\ll}_r | Z \rangle$ is symmetric and multilinear in $\hat{\ll}_0,\ldots,\hat{\ll}_r $, and vanishes unless $d \in \{ r, r+1 \}$. In case $d = r+1$ we have
\[ \langle \hat{\ll}_0,\ldots,\hat{\ll}_r | Z \rangle = c_1(\ll_{0})\cdots c_1(\ll_{r})[Z] \in \zz \, .  \]
The definition of the intersection number $\langle \hat{\ll}_0,\ldots,\hat{\ll}_r | Z \rangle $ extends in a straightforward manner to arbitrary cycles $Z \in \mathrm{Z}_*(X)$.
\begin{definition} \label{height} Let $Z \subset X$ be a closed subvariety of dimension $r$. Let $\hat{\ll}$ be an integrable ample line bundle on $X$. Then one puts
\[ \h'_{\hat{\ll}}(Z) = 
\frac{1}{[k:\qq]} \frac{ \langle \hat{\ll}^{r+1} | Z \rangle}{\langle \hat{\ll}^r|Z \rangle (r+1) } =
\frac{1}{[k:\qq]} \frac{ \langle \hat{\ll}^{r+1} | Z \rangle}{\deg_{\ll}(Z) (r+1) }  \]
for the (normalized or absolute) height of $Z$ with respect to $\hat{\ll}$.
\end{definition}

\section{Projection formulae} \label{sec:projection}

The purpose of this section is to state and prove some projection formulae for integrable line bundles. Recall that an integrable line bundle is essentially determined by a sequence of models. At each level in the sequence, the usual arithmetic intersection numbers satisfy a number of projection formulae as shown in \cite{bgs} \cite{giso}. These formulae yield analogues for integrable line bundles by passage to the limit. Along the way we include a discussion of the Deligne pairing of integrable bundles.

The first projection formula we state is also observed in \cite[Section~3.1.2]{cl}. 
\begin{prop} \label{projection} 
Let $X, Y$ be smooth projective varieties over the number field $k$. Let $f \colon Y \to X$ be a morphism.  Let $Z \in \mathrm{Z}_*(Y)$ be a cycle on $Y$. Let $\hat{\ll}_0,\ldots,\hat{\ll}_r$ be  integrable adelic line bundles on $X$. Then the adelic line bundles $f^*\hat{\ll}_0,\ldots,f^*\hat{\ll}_r$ are integrable on $Y$, and the equality 
\[ \langle \hat{\ll}_0, \ldots, \hat{\ll}_r | f_*(Z) \rangle = \langle f^*(\hat{\ll}_0),\ldots, f^*(\hat{\ll}_r) | Z \rangle  \]
holds in $\rr$. 
\end{prop}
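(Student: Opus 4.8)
The plan is to reduce the statement to the corresponding projection formula for ordinary (non-adelic) arithmetic intersection numbers proved in \cite{bgs} \cite{giso}, and then pass to the limit. First I would dispose of the integrability claim: this is essentially the content of the last Remark before Section~\ref{sec:projection}, which handles the case of a single semipositive bundle; since $f^*$ commutes with tensor products and inverses and every integrable bundle is a quotient of two semipositive ones, $f^*\hat{\ll}_0,\ldots,f^*\hat{\ll}_r$ are integrable on $Y$. By multilinearity of both sides of the asserted identity in each $\hat{\ll}_i$, and by linearity in $Z$, it suffices to treat the case where each $\hat{\ll}_i$ is semipositive and $Z$ is an integral closed subvariety of $Y$.

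Next I would set up the models. For each $i$, choose semipositive models $(\tilde{X}_i,\bar{\ll}_{ij})$ of $(X,\ll_j^{\otimes e_{ij}})$ approximating $\hat{\ll}_j$; after taking the fibre product over $S$ of the $\tilde{X}_{i0},\ldots,\tilde{X}_{ir}$, restricting to the Zariski closure of the small diagonal, and using the Remark on the small diagonal, I may assume all the $\bar{\ll}_{ij}$ live on a common model $\tilde{X}_i$. Then, exactly as in the Remark before the discussion of integrability, there is a model $\tilde{Y}_i$ of $Y$ and a morphism $\tilde{f}_i \colon \tilde{Y}_i \to \tilde{X}_i$ extending $f$ (after possibly enlarging $\tilde{Y}_i$ by a further fibre product and closure, which does not change the induced metrics), and $\tilde{f}_i^*\bar{\ll}_{ij}$ is a semipositive model of $(Y,f^*\ll_j^{\otimes e_{ij}})$ approximating $f^*\hat{\ll}_j$. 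Let $\tilde{Z}_i$ be the Zariski closure of $Z$ in $\tilde{Y}_i$; then $\tilde{f}_{i*}(\tilde{Z}_i)$ is a cycle on $\tilde{X}_i$ whose generic fibre is $f_*(Z)$, and one checks that its Zariski closure (or rather the relevant components of it) agrees with what is obtained by closing up $f_*(Z)$ in $\tilde{X}_i$, up to vertical cycles supported on finitely many fibres — but since the $\hat{\ll}_j$ are globally defined and the approximation is uniform, such vertical contributions are harmless in the limit. Here I would invoke the projection formula for arithmetic intersection numbers at each finite level, namely $\langle \bar{\ll}_{i0},\ldots,\bar{\ll}_{ir} \mid \tilde{f}_{i*}(\tilde{Z}_i)\rangle = \langle \tilde{f}_i^*\bar{\ll}_{i0},\ldots,\tilde{f}_i^*\bar{\ll}_{ir}\mid \tilde{Z}_i\rangle$, which is precisely the projection formula of \cite[Section~2.3]{bgs} (or \cite[Section~3.3]{giso}) combined with $\tilde{f}_{i*}\tilde{f}_i^* = \mathrm{id}$ on the relevant Chow groups after pushforward.

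Finally, dividing both sides by $e_{i0}\cdots e_{ir}$ and letting $i \to \infty$, the left side converges to $\langle \hat{\ll}_0,\ldots,\hat{\ll}_r \mid f_*(Z)\rangle$ and the right side to $\langle f^*\hat{\ll}_0,\ldots,f^*\hat{\ll}_r \mid Z\rangle$ by \cite[Theorem~1.4]{zhsmall}, giving the claimed equality in $\rr$. The main obstacle I anticipate is bookkeeping at the level of models: ensuring that one can arrange a \emph{compatible} tower of models $\tilde{Y}_i \to \tilde{X}_i$ over $S$ such that $\tilde{f}_i^*\bar{\ll}_{ij}$ still \emph{approximates} $f^*\hat{\ll}_j$ in the precise sense (uniform convergence of the log-ratios of metrics at every place, and stabilization at all but finitely many places), and that the closure $\tilde{Z}_i$ behaves well under $\tilde{f}_{i*}$ relative to the closure of $f_*(Z)$. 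None of these steps is deep — each is a routine manipulation of models and the cited finite-level projection formula — but the identity must be assembled carefully so that the passage to the limit is legitimate; this is exactly the sort of argument already used implicitly in the Remarks preceding the statement, so I would lean on those.
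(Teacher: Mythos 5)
Your proposal is correct and follows essentially the same route as the paper's proof: reduce to semipositive $\hat{\ll}_j$ and integral $Z$, extend $f$ to a morphism of models $\tilde{f}_i \colon \tilde{Y}_i \to \tilde{X}_i$ so that $\tilde{f}_i^*\bar{\ll}_{ij}$ approximates $f^*\hat{\ll}_j$, apply the projection formula of \cite[Proposition~3.2.1]{bgs} at each finite level, and pass to the limit using \cite[Theorem~1.4]{zhsmall}. Two small points: no vertical components arise at all, since $\tilde{f}_{i*}(\tilde{Z}_i)$ is exactly the Zariski closure of the cycle $f_*(Z)$ (the proper image of the horizontal closure of $Z$ is the closure of $f(Z)$, with matching generic degree), so your appeal to vertical contributions being ``harmless in the limit'' is both unnecessary and, as a general principle, not valid; and the finite-level identity is the cited projection formula itself, with no need for any ``$\tilde{f}_{i*}\tilde{f}_i^*=\mathrm{id}$'' statement.
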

\begin{proof} Without loss of generality we may assume that the $\hat{\ll}_j$ are semipositive and approximated by semipositive models $(\xx_i,\bar{\ll}_{ij})$ of $(X,\ll_j^{\otimes e_{ij}})$, and that $Z$ is an integral cycle. For each $i=1, 2, \ldots$ there exists a model $\yy_i$ of $Y$ together with a morphism $\tilde{f}_i \colon \yy_i \to \xx_i$ extending the morphism $f$. Then each $f^*\hat{\ll}_j$ is semipositive, approximated by semipositive models $(\yy_i,\tilde{f}_i^*\bar{\ll}_{ij})$. For each $i=1, 2, \ldots$ let $\tilde{Z}_i$ denote the Zariski closure of $Z$ inside $\yy_i$. By the projection formula from \cite[Proposition 3.2.1]{bgs} and the Remark following \cite[Proposition 3.2.1]{bgs} we have for each $i=1, 2, \ldots$ an equality of arithmetic intersection numbers
\[  \langle \bar{\ll}_{i0},\ldots, \bar{\ll}_{ir} | \tilde{f}_{i*}(\tilde{Z}_i) \rangle = \langle \tilde{f}_i^*(\bar{\ll}_{i0}), \ldots, \tilde{f}_i^*(\bar{\ll}_{ir}) | \tilde{Z}_i \rangle    \]
in $\rr$. Dividing both sides by $e_{i0}\cdots e_{ir}$ and taking the limit as $i \to \infty$ we obtain the equality from the proposition. 
\end{proof}
\begin{prop} \label{externalproducts} Let $X, Y$ be smooth projective varieties over the number field $k$. Let $p \colon X \times Y \to X$ and $q \colon X \times Y \to Y$ denote the canonical projections. Let $\hat{\ll}_0,\ldots,\hat{\ll}_r$ be integrable adelic line bundles on $X$, and let $\hat{\mm}_0,\ldots, \hat{\mm}_s$ be integrable adelic line bundles on $Y$. Then the adelic line bundles $p^*\hat{\ll}_0, \ldots, p^*\hat{\ll}_r, q^*\hat{\mm}_0, \ldots, q^*\hat{\mm}_s$ are integrable on $X \times Y$, and the equality
\[ \langle p^*\hat{\ll}_0, \ldots, p^*\hat{\ll}_r, q^*\hat{\mm}_0, \ldots, q^*\hat{\mm}_s | X \times Y \rangle = \langle \hat{\ll}_0, \ldots, \hat{\ll}_r | X \rangle \langle \hat{\mm}_0, \ldots, \hat{\mm}_s | Y\rangle    \]
holds in $\rr$. 
\end{prop}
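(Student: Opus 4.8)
The plan is to follow the pattern of the proof of Proposition~\ref{projection}: reduce to semipositive bundles, prove the identity at each finite level of an approximating sequence of models, and pass to the limit. By the multilinearity of the intersection pairing, and since the pullback of an integrable line bundle along a morphism is again integrable (cf.\ the Remark in Section~\ref{prelims}), we may assume that all the $\hat\ll_j$ and all the $\hat\mm_j$ are semipositive. Fix semipositive models $(\tilde X_i,\bar\ll_{i0},\ldots,\bar\ll_{ir})$ of $(X,\ll_0^{\otimes e_{i0}},\ldots,\ll_r^{\otimes e_{ir}})$ approximating $\hat\ll_0,\ldots,\hat\ll_r$ and, over the same index set, semipositive models $(\tilde Y_i,\bar\mm_{i0},\ldots,\bar\mm_{is})$ of $(Y,\mm_0^{\otimes f_{i0}},\ldots,\mm_s^{\otimes f_{is}})$ approximating $\hat\mm_0,\ldots,\hat\mm_s$. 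Put $S=\Spec R$ and let $\tilde p_i,\tilde q_i$ be the projections of $\tilde X_i\times_S\tilde Y_i$ onto $\tilde X_i$ and $\tilde Y_i$; then $\tilde X_i\times_S\tilde Y_i$ is a flat projective model of $X\times Y$ over $S$, and $\tilde p_i,\tilde q_i$ are flat and proper, being base changes of $\tilde Y_i\to S$ and $\tilde X_i\to S$. A relatively semipositive line bundle pulls back to a relatively semipositive one along the flat morphisms $\tilde p_i,\tilde q_i$, and smooth semipositive curvature forms pull back to smooth semipositive curvature forms; moreover the metrics that $\tilde p_i^*\bar\ll_{ij}$ and $\tilde q_i^*\bar\mm_{ij}$ induce on $X\times Y$ at the finite places are the pullbacks of those induced by $\bar\ll_{ij}$ and $\bar\mm_{ij}$, so the two convergence conditions in the definition of an approximating sequence are inherited. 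Hence $(\tilde X_i\times_S\tilde Y_i,\tilde p_i^*\bar\ll_{ij})$ and $(\tilde X_i\times_S\tilde Y_i,\tilde q_i^*\bar\mm_{ij})$ are semipositive models approximating $p^*\hat\ll_j$, resp.\ $q^*\hat\mm_j$, which are therefore integrable, as asserted. (If integral models are wanted one replaces $\tilde X_i\times_S\tilde Y_i$ by the Zariski closure of $X\times Y$ in it, with no effect on what follows.)

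The heart of the matter is the finite-level Künneth-type identity
\[ \langle\,\tilde p_i^*\bar\ll_{i0},\ldots,\tilde p_i^*\bar\ll_{ir},\tilde q_i^*\bar\mm_{i0},\ldots,\tilde q_i^*\bar\mm_{is}\;|\;\tilde X_i\times_S\tilde Y_i\,\rangle = \langle\,\bar\ll_{i0},\ldots,\bar\ll_{ir}\;|\;\tilde X_i\,\rangle\cdot\langle\,\bar\mm_{i0},\ldots,\bar\mm_{is}\;|\;\tilde Y_i\,\rangle \]
of ordinary arithmetic intersection numbers, the arithmetic analogue of the classical product formula $c_1(\tilde p_i^*\ll_{i0})\cdots c_1(\tilde q_i^*\mm_{is})\,[X\times Y]=\bigl(c_1(\ll_{i0})\cdots c_1(\ll_{ir})[X]\bigr)\bigl(c_1(\mm_{i0})\cdots c_1(\mm_{is})[Y]\bigr)$ for degrees of cycles. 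To prove it one pushes the class $\hat{c}_1(\tilde p_i^*\bar\ll_{i0})\cdots\hat{c}_1(\tilde q_i^*\bar\mm_{is})\cap[\tilde X_i\times_S\tilde Y_i]$ down to $S$ along $\tilde X_i\times_S\tilde Y_i\to\tilde Y_i\to S$ in two stages. Pushing along $\tilde q_i$ first, the projection formula \cite[Proposition~3.2.1]{bgs} extracts each $\hat{c}_1(\tilde q_i^*\bar\mm_{ij})$ as $\hat{c}_1(\bar\mm_{ij})$ and leaves $\hat{c}_1(\bar\mm_{i0})\cdots\hat{c}_1(\bar\mm_{is})\cap\tilde q_{i*}\bigl(\hat{c}_1(\tilde p_i^*\bar\ll_{i0})\cdots\hat{c}_1(\tilde p_i^*\bar\ll_{ir})\cap[\tilde X_i\times_S\tilde Y_i]\bigr)$; the inner class equals the flat pullback along $\tilde p_i$ of $\hat{c}_1(\bar\ll_{i0})\cdots\hat{c}_1(\bar\ll_{ir})\cap[\tilde X_i]$, and base change in the evident Cartesian square identifies its $\tilde q_{i*}$-pushforward with the pullback to $\tilde Y_i$ of $\langle\bar\ll_{i0},\ldots,\bar\ll_{ir}\,|\,\tilde X_i\rangle\in\widehat{CH}(S)$. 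Pushing the result down along $\tilde Y_i\to S$ and applying the projection formula once more, now to move this pulled-back class past the pushforward, yields $\langle\bar\ll_{i0},\ldots,\bar\ll_{ir}\,|\,\tilde X_i\rangle\cdot\langle\bar\mm_{i0},\ldots,\bar\mm_{is}\,|\,\tilde Y_i\rangle$, as required. When $\tilde X_i$ or $\tilde Y_i$ is not regular one first passes, exactly as recalled in Section~\ref{prelims}, to a closed immersion into a smooth $S$-scheme, so that all these Chow-theoretic operations are available.

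Finally, dividing the displayed identity through by $e_{i0}\cdots e_{ir}f_{i0}\cdots f_{is}$ and letting $i\to\infty$, the left-hand side converges to $\langle p^*\hat\ll_0,\ldots,p^*\hat\ll_r,q^*\hat\mm_0,\ldots,q^*\hat\mm_s\,|\,X\times Y\rangle$, and each of the two factors on the right converges to $\langle\hat\ll_0,\ldots,\hat\ll_r\,|\,X\rangle$, resp.\ $\langle\hat\mm_0,\ldots,\hat\mm_s\,|\,Y\rangle$, all by \cite[Theorem~1.4]{zhsmall}; this proves the proposition. (Here the dimensions are understood to be compatible, the generic case being $\dim X+\dim Y=r+s+1$, where one of the two factors on the right is an arithmetic degree and the other a geometric one; when $\dim X+\dim Y=r+s+2$ the identity is the classical product formula for degrees of cycles.) The step that requires genuine care is the finite-level identity above: one must be sure that flat pullback along $\tilde p_i$, proper pushforward along $\tilde q_i$ and $\tilde Y_i\to S$, cap products with first arithmetic Chern classes, and base change all interact as expected within the Gillet-Soul\'e formalism. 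This is legitimate because every scheme occurring is flat over the Dedekind base $S$, so the relevant square is Tor-independent, and non-regular models are dealt with by embedding into smooth $S$-schemes; a slicker alternative, once it is available, is to invoke the base-change compatibility of the Deligne pairing of integrable line bundles.
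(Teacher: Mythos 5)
Your overall strategy coincides with the paper's: reduce by multilinearity to semipositive bundles approximated by semipositive models, establish a K\"unneth-type identity for ordinary arithmetic intersection numbers at each finite level on the product model $\xx_i\times_S\yy_i$, and pass to the limit via \cite[Theorem 1.4]{zhsmall}. The reduction, the verification that the product models approximate the pulled-back bundles (hence their integrability), and the limiting step are all fine and match the paper. The divergence is in how the finite-level identity is justified, and that is precisely where your argument is under-supported. The paper handles it in one stroke: realize $\xx_i$ and $\yy_i$ as closed subschemes of schemes $\xx'_i$, $\yy'_i$ smooth over $S$, with the hermitian bundles extended from there, and apply \cite[Proposition 2.3.3]{bgs}, which is exactly the product formula for heights of product cycles.

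Your substitute argument --- push down along $\tilde{q}_i$ using the projection formula, identify the intermediate class by base change, then push down along $\yy_i\to S$ --- is morally how such a product formula is proved, but two points need more than the references available. First, the base-change compatibility $\tilde{q}_{i*}\,\tilde{p}_i^{\,*}=g^{*}f_{*}$ in arithmetic Chow groups (including its archimedean, Green-current part) is asserted rather than cited; the results from \cite{giso} and \cite{bgs} invoked in this paper give projection formulae, not this base-change statement, so it would have to be justified separately. Second, and more seriously, you treat non-regularity of the models as an afterthought, whereas regular models need not exist at all (resolution of singularities is unavailable in this setting), so the case requiring the embedding into smooth $S$-schemes is the general one. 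After embedding, the class to be manipulated is $j_*[\xx_i\times_S\yy_i]$ inside the smooth ambient product $\xx'_i\times_S\yy'_i$, not a fundamental class, so your step ``the inner class equals the flat pullback along $\tilde{p}_i$ of $\hat{c}_1(\bar{\ll}_{i0})\cdots\hat{c}_1(\bar{\ll}_{ir})\cap[\xx_i]$'' no longer applies as stated; what one then needs is precisely the compatibility of the height of a product cycle with the product of the two heights, i.e.\ the content of \cite[Proposition 2.3.3]{bgs}. The cleanest repair is to cite that result, as the paper does; otherwise you must supply the base-change theorem and the product-cycle (star-product of Green currents) analysis yourself.
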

\begin{proof} Without loss of generality we may assume that the $\hat{\ll}_j$ resp. $\hat{\mm}_j$ are semipositive and approximated by semipositive models $(\xx_i,\bar{\ll}_{ij})$ of $(X,\ll_j^{\otimes e_{ij}})$ resp. $(\yy_i,\bar{\mm}_{ij})$ of $(Y,\mm_j^{\otimes f_{ij}})$. 
Denote by $\tilde{p}_i \colon \xx_i \times_S \yy_i \to \xx_i$ and $\tilde{q}_i \colon \xx_i \times_S \yy_i \to \yy_i$ the canonical projections extending $p$ and $q$. 
We claim that for each $i=1,2,\ldots$ the equality
\[ \langle \tilde{p}_i^*\bar{\ll}_{i0}, \ldots,  \tilde{q}_i^*\bar{\mm}_{is} | \xx_i \times_S \yy_i \rangle = \langle \bar{\ll}_{i0}, \ldots, \bar{\ll}_{ir} | \xx_i \rangle \langle \bar{\mm}_{i0}, \ldots, \bar{\mm}_{is} | \yy_i\rangle  \]
holds. Upon dividing both sides of the equality by $e_{i0}\cdots e_{ir}f_{i0}\cdots f_{is}$ and taking limits as $i \to \infty$ we then obtain the equality from the proposition. To prove the claim, recall that $\xx_i$ and $\yy_i$ can be realized as integral closed subschemes of projective schemes $\xx'_i$ and $\yy'_i$ that are smooth over $S$, and that the hermitian line bundles $\bar{\ll}_{ij}$ and $\bar{\mm}_{ij}$ can be assumed to be restrictions to $\xx_i$ and $\yy_i$ of hermitian line bundles on $\xx'_i$ and $\yy'_i$. The required equality then follows by an application of \cite[Proposition 2.3.3]{bgs}.
\end{proof}
\begin{remark} Both sides of the equality in Proposition \ref{externalproducts} vanish unless $\dim X=r+1$ and $\dim Y=s+1$, or $\dim X=r$ and $\dim Y=s+1$, or $\dim X=r+1$ and $\dim Y=s$.
\end{remark}
Our next goal is to introduce and study the Deligne pairing for integrable adelic line bundles. We refer to \cite[Section 1.1]{zhss} and \cite[Section 2.1]{zhgs} for more details. Let $X, Y$ be integral schemes and let $f \colon Y \to X$ be a flat projective morphism of relative dimension~$n$, and let $\mm_0,\ldots,\mm_n$ be $n+1$ line bundles on $Y$. Then we have a canonical line bundle $\langle \mm_0,\ldots,\mm_n \rangle$ on $X$ locally generated by sections $\langle m_0,\ldots,m_n \rangle$, where $m_0,\ldots,m_n$ are local sections of $\mm_0,\ldots,\mm_n$ with empty common zero locus, and with relations 
\[ \langle m_0, \ldots, h\, m_j , \ldots, m_n \rangle = \prod_s \mathrm{Nm}_{Z_s/X}(h)^{a_s} \langle m_0, \ldots,  m_n \rangle \]
for all $j=0,\ldots,n$ and all regular functions $h$ such that $\cap_{i \neq j} \divisor m_i=\sum_s a_s Z_s$ is finite flat over $X$ and has empty intersection with $\divisor h$. Here $\mathrm{Nm}_{Z_s/X}(h)$ denotes the norm of $h|_{Z_s}$ along the finite flat map $Z_s \to X$. The formation of the Deligne pairing $\langle \mm_0,\ldots,\mm_n \rangle$ is symmetric and multilinear in $\mm_0,\ldots,\mm_n$ and commutes with arbitrary base change. 

Now let $f \colon Y \to X$ be a flat morphism of relative dimension $n$ between two smooth projective varieties over $k$, let $\xx$ and $\yy$ be projective flat models of $X$ and $Y$ over the ring of integers $R$ of $k$, and assume that $f$ extends into a flat morphism $\tilde{f} \colon \tilde{Y} \to \tilde{X}$. Let $\bar{\mm}_0,\ldots,\bar{\mm}_n$ be $n+1$ smooth hermitian line bundles on $\yy$ with underlying line bundles $\tilde{\mm}_0,\ldots,\tilde{\mm}_n$ and generic fibers $\mm_0,\ldots,\mm_n$. Then for each $v \in M(k)_\infty$ the Deligne pairing $\langle \tilde{\mm}_0,\ldots,\tilde{\mm}_n \rangle$ along $\tilde{f}$ is equipped with a canonical hermitian metric $\|\cdot\|_v$ which is given recursively as follows. Let $m_0,\ldots,m_n$ be non-zero local sections of $\mm_0,\ldots,\mm_n$ over $Y$ with empty common zero locus. Assume that $\divisor m_n$ is a prime divisor on $Y$ and that $m_i|_{\divisor m_n}$ is non-zero for $i=0,\ldots,n-1$. Then for all $v \in M(k)_\infty$ we have 
\begin{equation} \label{local_recursive_infinite} \begin{split} \log \| \langle m_0,\ldots,m_n \rangle \|_v  = & \,\,
\log \| \langle m_0|_{\divisor m_n},\ldots,m_{n-1}|_{\divisor m_n} \rangle \|_v \\ & \,\,+ \int_{Y_v(\cc)/X_v(\cc)} \log \|m_n\|_v \, c_1(\bar{\mm}_{0,v}) \cdots c_1(\bar{\mm}_{n-1,v}) \, .   
\end{split} \end{equation}
By \cite[Theorem A]{mor} the metric $\|\cdot\|_v$ is indeed continuous on $X_v(\cc)$. If $f$ is smooth then the metric $\|\cdot\|_v$ is actually smooth on $X_v(\cc)$ by \cite[Proposition~8.5]{de} and \cite[Th\'eor\`eme I.1.1]{elkik}. 
\begin{prop} \label{pushforwardDeligne} Let $X$, $Y$ be smooth projective varieties over $k$, and let $f \colon Y \to X$ be a smooth morphism of relative dimension $n$. Let $\xx$ and $\yy$ be projective flat models of $X$ and $Y$ over $R$, and assume that $f$ extends into a flat morphism $\tilde{f} \colon \tilde{Y} \to \tilde{X}$. Then the Deligne pairing $(\langle \tilde{\mm}_0,\ldots,\tilde{\mm}_n \rangle, (\|\cdot\|_v)_{v \in M(k)_\infty})$ along $\tilde{f}$ is a smooth hermitian line bundle on $\xx$. Moreover, we have a natural induced map $\tilde{f}_* \colon \widehat{CH}^{n+1}(\yy) \to \widehat{CH}^1(\xx)$, and the equality
\[ \tilde{f}_*(\hat{c}_1(\bar{\mm}_0) \cdots \hat{c}_1(\bar{\mm}_n)) = \hat{c}_1(\langle \tilde{\mm}_0,\ldots,\tilde{\mm}_n \rangle, (\|\cdot\|_v)_{v \in M(k)_\infty})  \]
holds in $\widehat{CH}^1(\xx)$.
\end{prop}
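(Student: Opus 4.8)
The plan is to reduce the statement to the known projection formula for the Deligne pairing in classical Gillet–Soulé arithmetic intersection theory, as recorded in \cite{bgs} \cite{giso}, together with the explicit recursive description of the metric at the archimedean places given in \eqref{local_recursive_infinite}. First I would handle the statement about smoothness: since $f$ is smooth, the underlying line bundle $\langle \tilde{\mm}_0,\ldots,\tilde{\mm}_n \rangle$ is a genuine line bundle on $\tilde{X}$ (flatness of $\tilde f$ guarantees the Deligne pairing is well-defined and commutes with base change), and the metric $\|\cdot\|_v$ at each $v \in M(k)_\infty$ is smooth by the cited results of Deligne \cite[Proposition~8.5]{de} and Elkik \cite[Th\'eor\`eme I.1.1]{elkik}, exactly as noted in the paragraph preceding the statement. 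Hence $(\langle \tilde{\mm}_0,\ldots,\tilde{\mm}_n \rangle, (\|\cdot\|_v)_v)$ is a smooth hermitian line bundle on $\tilde X$, and in particular defines a class in $\widehat{CH}^1(\tilde X)$.

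Next I would construct the pushforward map $\tilde f_* \colon \widehat{CH}^{n+1}(\tilde Y) \to \widehat{CH}^1(\tilde X)$. Even when $\tilde X$ and $\tilde Y$ are not regular, one may, as explained in the excerpt, embed $\tilde Y$ in a scheme $\tilde Y'$ smooth over $S$ and likewise for $\tilde X$; but the cleanest route is to invoke the covariant functoriality of arithmetic Chow groups for projective morphisms of arithmetic varieties established in \cite[Section~3.3]{giso} (proper pushforward on $\widehat{CH}$), applied to $\tilde f$, which lowers degree by the relative dimension $n$. This gives the map $\tilde f_*$ on the nose. The content of the proposition is then the identity $\tilde f_*(\hat c_1(\bar{\mm}_0)\cdots \hat c_1(\bar{\mm}_n)) = \hat c_1(\langle \tilde{\mm}_0,\ldots,\tilde{\mm}_n\rangle, (\|\cdot\|_v)_v)$ in $\widehat{CH}^1(\tilde X)$.

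To prove this identity I would argue by induction on $n$, following the structure of the recursion \eqref{local_recursive_infinite}. The key input is the arithmetic projection formula / pushforward compatibility for a single arithmetic first Chern class: given a section $s$ of $\tilde{\mm}_n$ whose divisor $D = \divisor s$ is flat over $\tilde X$ of relative dimension $n-1$, one has $\hat c_1(\bar{\mm}_0)\cdots\hat c_1(\bar{\mm}_n) = \hat c_1(\bar{\mm}_0|_D)\cdots \hat c_1(\bar{\mm}_{n-1}|_D) + (\text{a Green-current contribution from }\log\|s\|)$ in the arithmetic intersection product on $\tilde Y$. Pushing forward via $\tilde f_*$, the first term becomes, by the inductive hypothesis applied to the Deligne pairing along $D \to \tilde X$, the class $\hat c_1\langle \tilde{\mm}_0|_D,\ldots,\tilde{\mm}_{n-1}|_D\rangle$; the archimedean Green-current term contributes precisely the fiber integral $\int_{Y_v/X_v}\log\|s\|_v\, c_1(\bar{\mm}_{0,v})\cdots c_1(\bar{\mm}_{n-1,v})$. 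Comparing with \eqref{local_recursive_infinite}, these two pieces are exactly the two summands in the recursive definition of the metric $\|\cdot\|_v$ on $\langle \tilde{\mm}_0,\ldots,\tilde{\mm}_n\rangle$, and by multilinearity and the density of such "good position" configurations one gets the identity on the nose. (The base case $n=0$, where $f$ is finite flat, is the statement that $\tilde f_* \hat c_1(\bar{\mm}_0) = \hat c_1\langle \tilde{\mm}_0\rangle$ with the norm metric, which is \cite[Section~1.1]{zhss}.)

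The main obstacle I expect is bookkeeping at the archimedean places: matching the Green-current / fiber-integration term arising from the Gillet–Soulé pushforward with the integral appearing in \eqref{local_recursive_infinite}, including getting all signs and normalizations right and justifying that one may reduce to the generic situation where $\divisor s$ is a prime divisor flat over the base meeting the other divisors properly. The purely algebraic (non-archimedean) part of the identity is essentially the classical compatibility of the Deligne pairing with proper pushforward of cycles and is comparatively routine.
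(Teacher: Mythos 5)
Your list of ingredients is the right one, and the algebraic half of your argument (existence of $\tilde f_*$ via \cite[Theorem 3.6.1]{giso}, and the fact that the divisor of the Deligne-pairing section is the pushforward of the intersection of the $\divisor \tilde m_j$) is exactly what the paper uses. The gap is in the inductive organization of the archimedean comparison. In your inductive step you restrict to $D=\divisor s$ and invoke the inductive hypothesis for ``the Deligne pairing along $D\to\xx$''. But the statement you are inducting on assumes the morphism is \emph{smooth} of the relevant relative dimension, whereas $D\to\xx$ is at best flat: its generic fibre is a divisor inside the fibres of $f$ and is in general singular over $X$. Consequently neither the Gillet--Soul\'e pushforward $\widehat{CH}^{n}(D)\to\widehat{CH}^{1}(\xx)$ (which, as the paper itself stresses, requires smoothness of the induced map on generic fibres) nor the smoothness of the intermediate Deligne metric is available --- Moriwaki \cite{mor} gives only continuity in the flat case --- so the intermediate class $\hat c_1\bigl(\langle \tilde{\mm}_0|_D,\ldots,\tilde{\mm}_{n-1}|_D\rangle\bigr)$ you want to push forward does not live in the arithmetic Chow group in the sense being used. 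This is more than the ``bookkeeping'' you flag; as written the induction does not close.

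The paper's proof avoids forming any intermediate arithmetic classes. One picks sufficiently general rational sections $\tilde m_0,\ldots,\tilde m_n$, represents $\tilde f_*(\hat c_1(\bar{\mm}_0)\cdots\hat c_1(\bar{\mm}_n))$ by the Arakelov divisor whose finite part is the pushforward of the intersection of the $\divisor\tilde m_j$ and whose archimedean part is the fibre integral of the star product $g_{0,v}\star\cdots\star g_{n,v}$, with $g_{j,v}=-\log\|m_j\|_v^2$, and represents $\hat c_1$ of the metrized Deligne pairing by $\bigl(\divisor\langle\tilde m_0,\ldots,\tilde m_n\rangle,\,-\log\|\langle m_0,\ldots,m_n\rangle\|_v^2\bigr)$. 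The finite parts agree by the defining property of the Deligne pairing, and the archimedean parts agree by unfolding the recursion (\ref{local_recursive_infinite}) completely into a sum of fibre integrals of the $g_{i,v}$ against delta currents and curvature forms, which is precisely the fibre-integrated star product. In other words, the recursion is exploited only at the level of Green functions on $X_v(\cc)$, where restricting to divisors costs nothing, and smoothness of $f$ enters exactly twice: to know the resulting metric is smooth and to have $\tilde f_*$ at all. To rescue your scheme you would either have to prove a preliminary version of the identity for continuous metrics and currents along flat families, or collapse the induction into the one-step comparison of representatives that the paper performs.
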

\begin{proof} The first assertion rephrases what was said just before the proposition. The existence of $\tilde{f}_* \colon \widehat{CH}^{n+1}(\yy) \to \widehat{CH}^1(\xx)$ follows from \cite[Theorem 3.6.1]{giso}. To prove the equality, let $\tilde{m}_0,\ldots,\tilde{m}_n$ be sufficiently general rational sections of $\tilde{\mm}_0,\ldots,\tilde{\mm}_n$ over $\yy$, and write $m_0,\ldots,m_n$ for their restrictions to $Y$. For each $j=0,\ldots,n$ and each $v \in M(k)_\infty$ we set $g_{j,v}= -\log \|m_j\|^2_v$. Then for each $j=0,\ldots,n$ the class $\hat{c}_1(\bar{\mm}_j)$  in $\widehat{CH}^1(\yy)$ is represented by the Arakelov divisor $\left(\divisor \tilde{m}_j,g_{j,v}\right)$, and the class $\tilde{f}_*(\hat{c}_1(\bar{\mm}_0) \cdots \hat{c}_1(\bar{\mm}_n))$ in $\widehat{CH}^1(\xx)$ is represented by the Arakelov divisor 
\[ \left(\tilde{f}_*(\divisor \tilde{m}_0) \cdots \tilde{f}_*(\divisor \tilde{m}_n),   \int_{Y_v(\cc)/X_v(\cc)}
g_{0,v} \star \cdots \star g_{n,v}\right) \, , \]
using the star-product of Green's currents as in \cite[Section~2.1]{giso}.
On the other hand the class $\hat{c}_1(\langle \tilde{\mm}_0,\ldots,\tilde{\mm}_n \rangle)$ in $\widehat{CH}^1(\xx)$ is represented by the Arakelov divisor 
\[ \left(\divisor \langle \tilde{m}_0,\ldots, \tilde{m}_n \rangle , g_v  \right)   \]
where $g_v =-\log \|\langle m_0,\ldots,m_n \rangle\|^2_v$ for each $v \in M(k)_\infty$. Now we have 
\[ \divisor \langle \tilde{m}_0,\ldots, \tilde{m}_n \rangle=\tilde{f}_*(\divisor \tilde{m}_0) \cdots \tilde{f}_*(\divisor \tilde{m}_n) \, , \]
and the recursive relations (\ref{local_recursive_infinite}) imply that 
\[ \begin{split} g_v & = -\log \|\langle m_0,\ldots,m_n \rangle\|^2_v \\
& = - \sum_{i=0}^n \int_{Y_v(\cc)/X_v(\cc)}   \log \|m_i\|^2 \,\delta_{\divisor m_{i+1}} \wedge \cdots \wedge \delta_{\divisor m_n} \wedge
c_1(\mm_0) \cdots c_1(\mm_{i-1}) \\
& = \sum_{i=0}^n \int_{Y_v(\cc)/X_v(\cc)}   g_{i,v} \, \delta_{\divisor m_{i+1}} \wedge \cdots \wedge \delta_{\divisor m_n} \wedge
c_1(\mm_0) \cdots c_1(\mm_{i-1}) \\
& = \int_{Y_v(\cc)/X_v(\cc)} g_{0,v} \star \cdots \star g_{n,v} \, . 
\end{split} \]
The proposition follows.
\end{proof}
Let $\hat{\mm}_0,\ldots,\hat{\mm}_n$ be $n+1$ semipositive adelic line bundles on $Y$, approximated by semipositive models $(\yy_i,\bar{\mm}_{ij})$ of $(Y,\mm_j^{\otimes e_i})$ for $i=1,2,\ldots$. Let $\tilde{X}_i$ be any sequence of models of $X$. Possibly after replacing each $\yy_i$ by a birational model $\yy'_i \to \yy_i$ we may assume that there exist morphisms $\tilde{f}_i \colon \yy_i \to \xx_i$ extending $f$. By an elementary argument using Hilbert schemes (cf. \cite[Section 5.2]{rg}) we may even assume, after replacing $\tilde{X}_i$ by a suitable blow-up and replacing $\tilde{Y}_i$ by its strict transform along that blow-up, that each $\tilde{f}_i$ is flat. We then find  a sequence of Deligne pairings $\langle \tilde{\mm}_{i0},\ldots,\tilde{\mm}_{in} \rangle$ along $\tilde{f}_i$ on the models $\xx_i$. It turns out that this sequence defines an adelic line bundle
\[ \langle \hat{\mm}_0,\ldots,\hat{\mm}_n \rangle= (\langle \mm_0,\ldots,\mm_n \rangle, (\|\cdot\|_v)_{v \in M(k)}) \]
on $X$, independent of choices. Assuming that the morphism $f$ is smooth, the metrics $\|\cdot\|_v$ for $v \in M(k)_\infty$ are smooth by Proposition \ref{pushforwardDeligne}, and the adelic line bundle $\langle \hat{\mm}_0,\ldots,\hat{\mm}_n \rangle$ is then in fact integrable. 

The metrics $\|\cdot \|_v$ can be described recursively as follows. Assume without loss of generality that $X = \Spec k$ and that the models $\yy_i$ are normal. Let $m_0,\ldots,m_n$ be non-zero local sections of $\mm_0,\ldots,\mm_n$ with empty common zero locus.  Assume that $\divisor m_n$ is a prime divisor on $Y$ and that $m_i|_{\divisor m_n}$ is non-zero for $i=0,\ldots,n-1$. Then for all $v \in M(k)$ we have 
\begin{equation} \label{local_recursive} \begin{split} \log \| \langle m_0,\ldots,m_n \rangle \|_v  = & \,\,
\log \| \langle m_0|_{\divisor m_n},\ldots,m_{n-1}|_{\divisor m_n} \rangle \|_v \\ & \,\,+ \int_{Y(\bar{k}_v)} \log \|m_n\|_v \, c_1(\hat{\mm}_0) \cdots c_1(\hat{\mm}_{n-1}) \, .   
\end{split} \end{equation}
For $v\in M(k)_\infty$ this formula boils down to (\ref{local_recursive_infinite}). Assume therefore that $v \in M(k)$ is a finite place. Then we have to make sense of the integral  in (\ref{local_recursive}). We can do that using the models $(\yy_i,\bar{\mm}_{ij})$ of $(Y,\mm_j^{\otimes e_i})$ as follows. For all $i=1,2,\ldots$ let $\tilde{m}_{in}$ be the rational section of $\tilde{\mm}_{in}$ extending the section $m_n^{\otimes e_i}$ of $\mm_n^{\otimes e_i}$. Then we have on each $\yy_i$ an equality of Weil divisors
\[ \divisor \tilde{m}_{in} = e_i \, \overline{ \divisor{m_n} } + V_i \, ,\]
where $V_i=\sum_{v \in M(k)_0} V_{i,v}$ is a Weil divisor supported in the closed fibers $\yy_{i,v}$ of $\yy_i$ over $\xx_i=\Spec R$. Here $\overline{ \divisor{m_n} }$ denotes the Zariski closure of $\divisor{m_n}$ in $\yy_i$. Then we define
\[ \begin{split} \int_{Y(\bar{k}_v)} \log \|m_n\|_v \,& c_1(\hat{\mm}_0) \cdots c_1(\hat{\mm}_{n-1}) = \\
& \lim_{i \to \infty} c_1(\tilde{\mm}_{i0})\cdots c_1(\tilde{\mm}_{i,n-1})[V_{i,v}] \log Nv /e_i^n\, . \end{split}  \]
We see that the definition of $\langle \hat{\mm}_0,\ldots,\hat{\mm}_n \rangle$ extends to any collection of integrable line bundles $\hat{\mm}_0,\ldots,\hat{\mm}_n$ on $Y$. Also we note that in the context of the Berkovich analytic space $Y_v^{\mathrm{an}}$ associated to $Y_v$, where $v \in M(k)_0$, the measure $c_1(\hat{\mm}_0) \cdots c_1(\hat{\mm}_{n-1})$ would be the Chambert-Loir measure \cite{clmeas} associated to the integrable line bundles $\hat{\mm}_0,\ldots,\hat{\mm}_{n-1}$.

Continuing with the case that $X = \Spec k$, we note the global equalities
\[  \langle \hat{\mm}_0, \ldots, \hat{\mm}_n | Y \rangle  = \langle \langle \hat{\mm}_0, \ldots, \hat{\mm}_n \rangle | \Spec k \rangle = -\sum_{v \in M(k)} \log \| \langle m_0,\ldots,m_n \rangle \|_v   \]
in $\rr$, where $m_0,\ldots,m_n$ can be any non-zero rational sections of $\mm_0,\ldots,\mm_n$ whose supports have empty common intersection on $Y$.
Assuming that $\divisor m_n$ is a prime divisor on $Y$, we then obtain from the local recursive formula (\ref{local_recursive}) the global recursive formula
\begin{equation} \label{global_recursive} \begin{split} \langle \hat{\mm}_0, \ldots, \hat{\mm}_n | Y \rangle = &\,\, \langle \hat{\mm}_0, \ldots, \hat{\mm}_{n-1} | \divisor m_n \rangle \\ & - \sum_{v \in M(k)} \int_{Y(\bar{k}_v)} \log \|m_n\|_v \, c_1(\hat{\mm}_0) \cdots c_1(\hat{\mm}_{n-1}) \, .   
\end{split} \end{equation}
It is easily seen that one has (\ref{global_recursive}) for all non-zero rational sections $m_n$ of $\mm_n$.
\begin{prop}  \label{projII}
Let $X, Y$ be smooth projective varieties over $k$ and let $f \colon Y \to X$ be a smooth morphism of relative dimension $n$.  Let $\hat{\ll}_0,\ldots,\hat{\ll}_r$ be  integrable adelic line bundles on $X$, and let $\hat{\mm}_{0},\ldots,\hat{\mm}_n$ be integrable adelic line bundles on $Y$. Then $f^*\hat{\ll}_j$ are integrable adelic line bundles on $Y$, and we have that
\[ \langle f^*(\hat{\ll}_0), \ldots, f^*(\hat{\ll}_r),  \hat{\mm}_1, \ldots,  \hat{\mm}_n |Y \rangle = c_1(\mm_1) \cdots c_1(\mm_n)[f]
 \langle \hat{\ll}_0, \ldots, \hat{\ll}_r | X \rangle \, . 
\] 
Here 
$c_1(\mm_1) \cdots c_1(\mm_n)[f]$ denotes the multidegree of the generic fiber of $f \colon Y \to X$ with respect to the line bundles $\mm_1, \ldots, \mm_n$. We also have that $\langle \hat{\mm}_0, \ldots, \hat{\mm}_n\rangle$ is an integrable adelic line bundle on $X$, and 
the identity
\[ \langle f^*(\hat{\ll}_0), \ldots, f^*(\hat{\ll}_r), \hat{\mm}_0, \ldots,  \hat{\mm}_n |Y \rangle =  \langle \hat{\ll}_0, \ldots, \hat{\ll}_r, \langle \hat{\mm}_0, \ldots,  \hat{\mm}_n \rangle | X \rangle 
\] 
holds in $\rr$.
\end{prop}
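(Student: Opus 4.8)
The plan is to reduce everything, by multilinearity, to the case where all the bundles $\hat{\ll}_j$ and $\hat{\mm}_j$ are semipositive, and then to deduce both identities from their counterparts for ordinary arithmetic intersection numbers at each finite level of a system of approximating models, passing to the limit by means of \cite[Theorem~1.4]{zhsmall}. Integrability of the pulled back bundles $f^*\hat{\ll}_j$ is already the content of the Remark following Proposition~\ref{projection}.

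First I would fix the setup. Assuming the $\hat{\ll}_j$ and $\hat{\mm}_j$ semipositive, choose with a common index $i$ semipositive models $(\tilde{X}_i,\bar{\ll}_{ij})$ of $(X,\ll_j^{\otimes e_{ij}})$ approximating $\hat{\ll}_j$ and semipositive models $(\tilde{Y}_i,\bar{\mm}_{ij})$ of $(Y,\mm_j^{\otimes f_{ij}})$ approximating $\hat{\mm}_j$. As in the discussion preceding the proposition --- using passage to birational models and the flattening technique of \cite[Section~5.2]{rg} --- one arranges that $f$ extends to a flat morphism $\tilde{f}_i\colon\tilde{Y}_i\to\tilde{X}_i$ for every $i$; then $(\tilde{Y}_i,\tilde{f}_i^*\bar{\ll}_{ij})$ are semipositive models of $(Y,f^*\ll_j^{\otimes e_{ij}})$ approximating $f^*\hat{\ll}_j$. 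For the second identity I would moreover record that the Deligne pairing $\langle\tilde{\mm}_{i0},\ldots,\tilde{\mm}_{in}\rangle$ along $\tilde{f}_i$, with its canonical archimedean metrics, is a semipositive smooth hermitian line bundle on $\tilde{X}_i$: smoothness of the metric is Proposition~\ref{pushforwardDeligne}, its curvature is the fibre integral $\tilde{f}_{i*}\bigl(c_1(\bar{\mm}_{i0})\cdots c_1(\bar{\mm}_{in})\bigr)$ of a product of semipositive forms hence semipositive, and relative semipositivity of the underlying line bundle follows by restricting to closed fibres, where by compatibility of Deligne pairings with base change it reduces to the nefness of the Deligne pairing of nef line bundles on a projective variety over a field, which in turn follows from the degree projection formula applied to curves. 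By construction --- again as in the discussion preceding the proposition --- the sequence $(\tilde{X}_i,\langle\tilde{\mm}_{i0},\ldots,\tilde{\mm}_{in}\rangle)$ approximates the integrable adelic line bundle $\langle\hat{\mm}_0,\ldots,\hat{\mm}_n\rangle$.

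Next I would establish the two identities at each finite level $i$, factoring the structure morphism $\tilde{Y}_i\to S$ through $\tilde{f}_i$ and $\tilde{X}_i\to S$. For the first, the projection formula for arithmetic Chow groups (\cite[Proposition~3.2.1]{bgs} together with the Remark following it, or \cite[Theorem~3.6.1]{giso}) gives $\tilde{f}_{i*}\bigl(\tilde{f}_i^*(\xi)\cdot\eta\bigr)=\xi\cdot\tilde{f}_{i*}(\eta)$, while $\tilde{f}_{i*}\bigl(\hat{c}_1(\bar{\mm}_{i1})\cdots\hat{c}_1(\bar{\mm}_{in})\bigr)$ equals $f_{i1}\cdots f_{in}\cdot\bigl(c_1(\mm_1)\cdots c_1(\mm_n)[f]\bigr)$ times the fundamental class in $\widehat{CH}^0(\tilde{X}_i)=\zz$. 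Pushing forward to $S=\Spec R$ and taking $\widehat{\deg}$ yields
\[ \langle\tilde{f}_i^*\bar{\ll}_{i0},\ldots,\tilde{f}_i^*\bar{\ll}_{ir},\bar{\mm}_{i1},\ldots,\bar{\mm}_{in}\mid\tilde{Y}_i\rangle = f_{i1}\cdots f_{in}\,\bigl(c_1(\mm_1)\cdots c_1(\mm_n)[f]\bigr)\,\langle\bar{\ll}_{i0},\ldots,\bar{\ll}_{ir}\mid\tilde{X}_i\rangle \, . \]
For the second, I would combine the same projection formula with Proposition~\ref{pushforwardDeligne}, which identifies $\tilde{f}_{i*}\bigl(\hat{c}_1(\bar{\mm}_{i0})\cdots\hat{c}_1(\bar{\mm}_{in})\bigr)$ with $\hat{c}_1\langle\tilde{\mm}_{i0},\ldots,\tilde{\mm}_{in}\rangle$ in $\widehat{CH}^1(\tilde{X}_i)$, to obtain
\[ \langle\tilde{f}_i^*\bar{\ll}_{i0},\ldots,\tilde{f}_i^*\bar{\ll}_{ir},\bar{\mm}_{i0},\ldots,\bar{\mm}_{in}\mid\tilde{Y}_i\rangle = \langle\bar{\ll}_{i0},\ldots,\bar{\ll}_{ir},\langle\tilde{\mm}_{i0},\ldots,\tilde{\mm}_{in}\rangle\mid\tilde{X}_i\rangle \, . \]

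Finally I would pass to the limit. Dividing the two displayed identities by $e_{i0}\cdots e_{ir}f_{i0}\cdots f_{in}$ (respectively by $e_{i0}\cdots e_{ir}f_{i1}\cdots f_{in}$ for the first) and letting $i\to\infty$: the left-hand sides converge, by the definitions of the integrable intersection numbers and \cite[Theorem~1.4]{zhsmall}, to the left-hand sides of the proposition; on the right, in the first identity the factor $f_{i1}\cdots f_{in}$ cancels so that the multidegree $c_1(\mm_1)\cdots c_1(\mm_n)[f]$ appears exactly and the rest converges to $\langle\hat{\ll}_0,\ldots,\hat{\ll}_r\mid X\rangle$, while in the second identity $\langle\tilde{\mm}_{i0},\ldots,\tilde{\mm}_{in}\rangle$ is a model of $\langle\mm_0,\ldots,\mm_n\rangle^{\otimes f_{i0}\cdots f_{in}}$ by multilinearity of the Deligne pairing, so the normalizations on both sides match and, using that $(\tilde{X}_i,\langle\tilde{\mm}_{i0},\ldots,\tilde{\mm}_{in}\rangle)$ is a semipositive approximating system for $\langle\hat{\mm}_0,\ldots,\hat{\mm}_n\rangle$, the right-hand side converges to $\langle\hat{\ll}_0,\ldots,\hat{\ll}_r,\langle\hat{\mm}_0,\ldots,\hat{\mm}_n\rangle\mid X\rangle$. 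Removing the semipositivity hypothesis by multilinearity --- and, for the second identity, by also expanding the Deligne pairing multilinearly --- completes the argument. The main obstacle I anticipate is precisely the verification recorded in the first step, that the Deligne pairing of semipositive models along the $\tilde{f}_i$ is again a semipositive model, together with the slightly delicate bookkeeping needed to produce flat extensions $\tilde{f}_i$ compatibly with the prescribed approximating systems; once these are in hand, everything else is a formal passage to the limit from the finite-level projection formulae.
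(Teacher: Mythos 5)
Your proposal is correct and follows essentially the same route as the paper: reduce to semipositive approximating models admitting flat extensions $\tilde{f}_i$, apply the Gillet--Soul\'e arithmetic projection formula at each finite level (with $\tilde{f}_{i*}(\hat{c}_1(\bar{\mm}_{i0})\cdots\hat{c}_1(\bar{\mm}_{in}))$ identified with the metrized Deligne pairing via Proposition \ref{pushforwardDeligne} for the second identity, and with the multidegree appearing in degree zero for the first), and pass to the limit after the appropriate normalization. The only real difference is that you spell out the semipositivity of the Deligne-pairing models, a point the paper absorbs into the discussion preceding the proposition rather than the proof itself.
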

\begin{proof} Without loss of generality we may assume that the $\hat{\ll}_j$ and $\hat{\mm}_j$ are semipositive, approximated  by semipositive models $(\xx_i,\bar{\ll}_{ij})$ of $(X,\ll_j^{\otimes e_i})$ and $(\yy_i,\bar{\mm}_i)$ of $(Y,\mm_j^{\otimes e'_i})$. We may assume in addition that there exist flat morphisms $\tilde{f}_i \colon \yy_i \to \xx_i$ for each $i=1,2,\ldots$ extending the morphism $f$. Since the morphism $f$ is smooth of relative dimension $n$ by assumption we have by \cite[Theorem 3.6.1]{giso} natural induced pushforward maps $\tilde{f}_{i*} \colon \widehat{\CH}^{n+1-s}(\tilde{Y}_i) \to \widehat{\CH}^{1-s}(\xx_i)$ for $s=0, 1$ and $i=1,2,\ldots$. Then by \cite[Theorem 4.4.3]{giso} we have for $s=0, 1$ and $i=1,2,\ldots$ an identity
\begin{equation} \label{bigformula} \begin{split} \tilde{f}_{i*} ( \tilde{f}_i^*(\hat{c}_1(\bar{\ll}_{i0})) & \cdots  
\tilde{f}_i^*(\hat{c}_1(\bar{\ll}_{ir}))\cdot \hat{c}_1( \bar{\mm}_{is}) \cdots \hat{c}_1(\bar{\mm}_{in} )) \\ & = 
\hat{c}_1(\bar{\ll}_{i0}) \cdots \hat{c}_1(\bar{\ll}_{ir}) \cdot  \tilde{f}_{i*}( \hat{c}_1(\bar{\mm}_{is}) \cdots \hat{c}_1(\bar{\mm}_{in}) ) \end{split} 
\end{equation}
in the arithmetic Chow group $\widehat{\CH}^{r+2-s}(\xx_i)$. We note that the condition in \cite[Theorem 4.4.3]{giso} that the models $\xx_i$ and $\yy_i$ should be regular can be removed in our setting by the argument in \cite[Section 2.3.1, Remark (ii)]{bgs}. Taking $s=1$ in (\ref{bigformula}) we find
\[ \langle \tilde{f}_i^*(\bar{\ll}_{i0}), \ldots,  
\tilde{f}_i^*(\bar{\ll}_{ir}), \bar{\mm}_{i1}, \ldots, \bar{\mm}_{in}|\yy_i \rangle = c_1(\mm_1) \cdots c_1(\mm_n)[f]
\langle \bar{\ll}_{i0} \cdots \bar{\ll}_{ir}|\xx_i \rangle   \]
in $\rr$. By Proposition \ref{pushforwardDeligne} the class of the Deligne pairing  $\langle \bar{\mm}_{i0},\ldots, \bar{\mm}_{in} \rangle$ along $\tilde{f}_i$ in $\widehat{CH}^1(\xx_i)$ is equal to $\tilde{f}_{i*}(\hat{c}_1(\bar{\mm}_{i0})\cdots\hat{c}_1(\bar{\mm}_{in}))$. Hence by taking $s=0$ in (\ref{bigformula}) we obtain the identity
\[ \langle \tilde{f}_i^*(\bar{\ll}_{i0}), \ldots,  
\tilde{f}_i^*(\bar{\ll}_{ir}),  \bar{\mm}_{i0}, \ldots, \bar{\mm}_{in}|\yy_i \rangle = 
\langle \bar{\ll}_{i0}, \ldots, \bar{\ll}_{ir},  \langle \bar{\mm}_{i0},\ldots, \bar{\mm}_{in} \rangle |\xx_i \rangle
\]
in $\rr$. We obtain the identities from the proposition by dividing both sides in the identities above by $e_i^{r+1} {e'_i}^{n}$ resp. $e_i^{r+1} {e'_i}^{n+1}$ and taking limits as $i \to \infty$.
\end{proof}

\section{Admissible bundles on abelian varieties and curves} \label{admissible}
 
We continue to work with the number field $k$. We recall that we have fixed a collection of absolute values $|\cdot|_v$ on all $k_v$ for $v \in M(k)$ satisfying the product formula.
Let $A$ be an abelian variety over $k$ and let $\ll$ be a symmetric  line bundle on $A$.
The following proposition follows directly from \cite[Theorem~2.2]{zhsmall}.
\begin{prop}  \label{basicadmissible} Let $\phi \colon 
\ll^{\otimes 4} \isom [2]^*\ll$ be an isomorphism of line bundles. Then there exists a unique integrable adelic metric $(\|\cdot \|_v)_v$ on $\ll$ such that for each $v \in M(k)$ the isomorphism $\phi$ is an isometry. If $\phi$ is changed into $a\phi$ for some $a \in k^*$ then for all $v \in M(k)$ the metric $\|\cdot\|_v$ changes into $|a|_v^{1/3}\|\cdot\|_v$.  
\end{prop}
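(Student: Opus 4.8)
The plan is to realise the metric as the limit of a Tate-style contraction --- the mechanism underlying \cite[Theorem~2.2]{zhsmall} --- and then to read off the dependence on $\phi$ by tracking scalars. Fix a place $v\in M(k)$ and write $A_v=A\otimes\bar k_v$, $\ll_v=\ll\otimes\bar k_v$. On the set of continuous metrics on $\ll_v$ (smooth hermitian at the archimedean places) introduce the operator $F=F_\phi$ sending a metric $\aabs{\cdot}$ to the unique metric $F(\aabs{\cdot})$ for which $\phi$ becomes an isometry $(\ll_v^{\otimes 4},F(\aabs{\cdot})^{\otimes 4})\isom([2]^*\ll_v,[2]^*\aabs{\cdot})$; concretely, for $x\in A_v(\bar k_v)$ and $\ell\in x^*\ll$ one sets $F(\aabs{\cdot})(\ell)^4=\aabs{\phi_x(\ell^{\otimes 4})}$, the right-hand norm being taken in $(2x)^*\ll$ with respect to $\aabs{\cdot}$. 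Since $[2]\colon A_v\to A_v$ is finite and surjective, $F$ preserves continuity of metrics; at the archimedean places it preserves smoothness, and because $c_1([2]^*\aabs{\cdot})=[2]^*c_1(\aabs{\cdot})$ it sends metrics of semipositive curvature to metrics of semipositive curvature. By construction, a metric on $\ll_v$ makes $\phi$ an isometry precisely when it is a fixed point of $F$.

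The engine is the elementary estimate that, for continuous metrics $\aabs{\cdot},\aabs{\cdot}'$ on $\ll_v$ with bounded continuous quotient $u=\log(\aabs{\cdot}'/\aabs{\cdot})$, one has directly from the defining formula $\log\big(F(\aabs{\cdot}')/F(\aabs{\cdot})\big)=\tfrac14\,[2]^*u$, hence $\sup_{A_v(\bar k_v)}\abs{\log\big(F(\aabs{\cdot}')/F(\aabs{\cdot})\big)}=\tfrac14\sup_{A_v(\bar k_v)}\abs{u}$. Thus $F$ is a $\tfrac14$-contraction for the uniform distance between metrics, so starting from any continuous metric $\aabs{\cdot}_0$ the sequence $\big(F^n(\aabs{\cdot}_0)\big)_n$ is uniformly Cauchy and converges to a metric $\aabs{\cdot}_v$, the unique fixed point of $F$; this is then the unique continuous metric on $\ll_v$ for which $\phi$ is an isometry. (At the archimedean places this limit agrees with the real-analytic metric attached to the relevant positive-definite hermitian form, which is classically known to make $\phi$ an isometry, so it is in particular smooth.) Carrying this out at every $v$ and recording uniqueness place by place gives the uniqueness assertion, once we know that the collection $(\aabs{\cdot}_v)_v$ is an integrable metric.

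Integrability is where \cite[Theorem~2.2]{zhsmall} and some care enter. First reduce to the case $\ll$ symmetric ample by writing $\ll=\ll_1\otimes\ll_2^{\otimes-1}$ with $\ll_1=\ll\otimes\mathcal N^{\otimes m}$ and $\ll_2=\mathcal N^{\otimes m}$, for a symmetric ample line bundle $\mathcal N$ and $m\gg0$; as $\Hom(\ll_i^{\otimes4},[2]^*\ll_i)$ is a one-dimensional $\bar k$-vector space, $\phi$ factors as $\phi_1\otimes\phi_2^{\otimes-1}$ for suitable isomorphisms $\phi_i\colon\ll_i^{\otimes4}\isom[2]^*\ll_i$, and the quotient metric $\aabs{\cdot}_1\otimes\aabs{\cdot}_2^{\otimes-1}$ is readily seen to be independent of the factorisation, to make $\phi$ an isometry, and hence (by the uniqueness above) to be the metric constructed here. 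For $\ll$ symmetric ample one runs $F$ starting from the adelic metric $\aabs{\cdot}_0$ attached to a projective flat model $(\tilde A,\bar{\ll}_0)$ of $(A,\ll^{\otimes e})$ over $R$ with $\bar{\ll}_0$ semipositive; after extending $[2]$ to a morphism of (suitably blown-up) models and extending $\phi$ accordingly, each $F^n(\aabs{\cdot}_0)$ is again induced by a semipositive model, these model metrics stabilise for all but finitely many $v$ (namely at the places of good reduction, where $[2]$ extends to a finite flat endomorphism of an abelian scheme and $\phi$ extends to an isomorphism of model line bundles), and $F^n(\aabs{\cdot}_0)\to\aabs{\cdot}$ uniformly at each $v$ by the contraction bound. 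Hence the limit is approximated by semipositive models, so it is semipositive, and in particular integrable. The main obstacle will be this last step --- checking that the Tate limit satisfies the two defining conditions of ``approximated by models'' --- which is exactly the content of \cite[Theorem~2.2]{zhsmall}; the contraction estimate and the scalar bookkeeping below are routine.

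Finally, to see the effect of replacing $\phi$ by $a\phi$ with $a\in k^\times$, note that the defining formula gives $F_{a\phi}(\aabs{\cdot})(\ell)^4=\aabs{(a\phi)_x(\ell^{\otimes4})}=\abs{a}_v\,\aabs{\phi_x(\ell^{\otimes4})}$, so $F_{a\phi}=\abs{a}_v^{1/4}F_\phi$, while for a constant $c>0$ one has $F_\phi(c\,\aabs{\cdot})=c^{1/4}F_\phi(\aabs{\cdot})$. Looking for the new fixed point in the form $c\,\aabs{\cdot}_v$ one obtains $c\,\aabs{\cdot}_v=F_{a\phi}(c\,\aabs{\cdot}_v)=\abs{a}_v^{1/4}c^{1/4}\aabs{\cdot}_v$, that is $c^{3/4}=\abs{a}_v^{1/4}$, whence $c=\abs{a}_v^{1/3}$. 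Therefore $\phi\mapsto a\phi$ multiplies $\aabs{\cdot}_v$ by $\abs{a}_v^{1/3}$ at every place $v$, which is the final claim.
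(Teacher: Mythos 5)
Your proposal is correct and follows essentially the same route as the paper, which simply invokes \cite[Theorem~2.2]{zhsmall}: your Tate-style contraction $F_\phi$ is exactly the mechanism behind that theorem, you defer the genuinely technical point (approximation by semipositive models, hence integrability) to the same citation, and your scalar bookkeeping giving $c^{3}=|a|_v$, i.e.\ the factor $|a|_v^{1/3}$, matches the statement. No gaps worth flagging.
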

We call $(\|\cdot \|_v)_v$ the admissible metric associated to $\phi$. We deduce from Proposition \ref{basicadmissible} that if $\ll$ is ample, the associated height $\h'_{\hat{\ll}}$ is independent of the choice of $\phi$. We therefore just denote this height by $\h'_\ll$. Let $Z$ be a closed subvariety of $A$. Then the height $\h'_\ll(Z)$ coincides with the N\'eron-Tate height of $Z$ as defined in \cite{gu} and \cite{phil} as explained in \cite[Section~3.1]{zhsmall}. In particular, for all $n \in \zz_{\geq 1}$ we have $\h'_\ll([n](Z))=n^2\h'_\ll(Z)$, and moreover we have the fundamental inequality $\h'_\ll(Z) \geq 0$. We mention that for $v \in M(k)_\infty$ the smooth hermitian line bundles $(\ll_v,\|\cdot\|_v)$ have a translation-invariant curvature form.

For the remainder of this section, let $X$ be a smooth projective connected curve defined over the number field $k$, and let $(J,\lambda)$ be the jacobian of $X$. We will work with a symmetric line bundle $\ll$ on $J$ defining the principal polarization $\lambda$. We will assume moreover that $\ll$ is rigidified at the origin. The rigidification determines a unique choice of isomorphism $\phi \colon \ll^{\otimes 4} \isom [2]^*\ll$ and hence a canonical structure of admissible adelic line bundle $\hat{\ll}$ on $\ll$. The construction of $\ll$ (which depends on the choice of a semi-canonical divisor $D$ on $X$) goes as follows. 

For every $D \in \mathrm{Div}^{g-1} X$ we first consider the line bundle
\[ \ll_D = \oo_J(t_{[D]}^*\Theta) \otimes e^*\oo_J(t_{[D]}^*\Theta)^{\otimes -1} \]
on $J$, where $\Theta \subset \mathrm{Pic}^{g-1} X$ is the canonical theta divisor of $X$ consisting of the classes of effective divisors, where $t_{[D]} \colon J=\mathrm{Pic}^0 X \to \mathrm{Pic}^{g-1} X$ is the translation along the divisor class $[D]$, and $e \in J(k)$ is the origin of $J$. Note that each line bundle $\ll_D$ is canonically rigidified at the origin. Moreover each $\ll_D$ is ample and induces the principal polarization $\lambda \colon J \to \hat{J}$ of $J$. Assume that $D$ is a semi-canonical divisor on $X$. Then the associated $\ll_D$ is symmetric, and we will take $\ll$ to be $\ll_D$.

For each $\alpha \in \mathrm{Div}^1 X$ recall the morphism $f_{1,\alpha} \colon X \to J$ given by sending $x \in X$ to the class of $x-\alpha$. For every line bundle $\mm$ on $X$ there exist (possibly after replacing $k$ by a finite extension) integers $e, e'$ and a divisor $\alpha \in \mathrm{Div}^1 X$ such that $f_{1,\alpha}^*\ll^{\otimes e'}$ is isomorphic, as a line bundle, to $\mm^{\otimes e}$ (in fact, note that $f_{1,\alpha}^*\ll$ is isomorphic to the line bundle $\oo(\alpha+D)$ on $X$). Given such $e, e'$ and $\alpha$ together with an isomorphism $\mm^{\otimes e} \isom f_{1,\alpha}^*\ll^{\otimes e'}$, pullback along $f_{1,\alpha}$ and transporting structure through the isomorphism yields a structure of integrable adelic metric on $\mm$. 

Varying $e, e', \alpha$ and the isomorphism $\mm^{\otimes e} \isom f_{1,\alpha}^*\ll^{\otimes e'}$, we call the resulting metrics on $\mm$ admissible metrics. If $(\mm_0, (\|\cdot\|_{0,v})_v)$ and $(\mm_1, (\|\cdot\|_{1,v})_v)$ are admissible line bundles on $X$, then so is their tensor product $(\mm_0\otimes\mm_1, (\|\cdot\|_{0,v}\otimes\|\cdot\|_{1,v})_v)$. Moreover, the admissible metrics on the trivial line bundle $\oo$ are all constant.

Let $m \colon J \times J \to J$ be the group law of $J$, and let $p_i \colon J \times J \to J$ for $i=1, 2$ denote the projections onto the two factors. We set
\[ \hat{\bb} = m^*\hat{\ll} \otimes p_1^* \hat{\ll}^{\otimes -1} \otimes 
p_2^* \hat{\ll}^{\otimes -1} \, , \]
equipped with the integrable adelic metric determined by the pullback admissible adelic metrics from $\hat{\ll}$ along $m$ and the $p_i$. The line bundle $\hat{\bb}$ is canonically rigidified, and the rigidification is an adelic isometry.
\begin{prop} \label{delpairingbiext} Let $Y$ be a projective variety over the number field $k$. Let $\hat{\mm}_0, \hat{\mm}_1$ denote integrable adelic line bundles on $X \times Y$ that are of relative degree zero for the canonical projection $\pi \colon X \times Y \to Y$. 
Then the Deligne pairings $\pair{\hat{\mm}_0,\hat{\mm}_0}$ and 
$\pair{\hat{\mm}_0,\hat{\mm}_1}$ are integrable adelic line bundles on $Y$.
Assume that $\hat{\mm}_0, \hat{\mm}_1$ are fiberwise admissible. Let $f_i \colon Y \to J$ for $i=0,1$ be the canonical map determined by the line bundle $\mm_i$ and write $f=(f_0,f_1) \colon Y \to J \times J$. Then there exist  isometries 
\[ \pair{ \hat{\mm}_0, \hat{\mm}_1 }^{\otimes -1} \isom f^* \hat{\bb}  \quad  \textrm{and} \quad \pair{ \hat{\mm}_0,\hat{\mm}_0}^{\otimes -1} \isom f_0^* \hat{\ll}^{\otimes 2} \]
of integrable adelic line bundles on $Y$.
\end{prop}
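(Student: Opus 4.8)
The plan is to establish the isometries fiberwise-by-construction, reducing everything to the recursive description of the Deligne pairing metric in~(\ref{local_recursive}) and to the defining isomorphism $\phi \colon \ll^{\otimes 4} \isom [2]^*\ll$ underlying the admissible adelic structures. First I would verify integrability: since $\hat{\mm}_0,\hat{\mm}_1$ are integrable on $X\times Y$ and the structure morphism $\pi\colon X\times Y\to Y$ is smooth projective of relative dimension one, Proposition~\ref{projII} (or the discussion preceding it) gives that $\pair{\hat{\mm}_0,\hat{\mm}_1}$ and $\pair{\hat{\mm}_0,\hat{\mm}_0}$ are integrable adelic line bundles on $Y$; this is the first sentence of the proposition and is essentially immediate from the earlier material.

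Next I would address the isomorphism of underlying line bundles, ignoring metrics. On each geometric fiber $X_y$ the class of $\mm_{i,y}$ in $\Pic^0 X_y = J$ is by definition $f_i(y)$. The classical theory of the Deligne pairing (the biextension / autoduality of the jacobian; see \cite{zhss}, \cite{zhgs}) identifies $\pair{\mm_0,\mm_1}^{\otimes -1}$ on $Y$ with the pullback of the Poincaré-type biextension line bundle along $f=(f_0,f_1)$, i.e.\ with $f^*\bb$ at the level of line bundles, and likewise $\pair{\mm_0,\mm_0}^{\otimes -1} \cong f_0^*\ll^{\otimes 2}$ via the theorem of the cube together with the symmetry of $\ll$ (the factor $2$ reflecting $\pair{L,L} \cong (-1)$-twist of $\Theta$ pulled back squared). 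These isomorphisms are canonical once one fixes the rigidifications, and $\hat{\bb}$, $\hat{\ll}$ are rigidified by hypothesis; so we get a \emph{canonical} isomorphism of the underlying bundles compatible with rigidifications.

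It then remains to check that this canonical isomorphism is an \emph{isometry} for the integrable adelic metrics on both sides, and this is where the real work lies. The strategy is: (i) for $v\in M(k)_\infty$, use the recursive formula~(\ref{local_recursive_infinite}) to compute the Deligne-pairing metric as an integral of $\log\|m\|_v$ against the translation-invariant curvature form of $\hat{\ll}_v$ — since $\hat{\mm}_{i,v}$ is fiberwise admissible its curvature is fiberwise translation-invariant, so the fiber integral is exactly the archimedean local Néron-Tate / biextension pairing, matching $f^*\hat{\bb}$; (ii) for $v\in M(k)_0$, use~(\ref{local_recursive}) together with the model-theoretic definition of the non-archimedean fiber integral via the vertical divisors $V_{i,v}$, again reducing to the admissible (canonical) measure $\mu_{\can}$ on the reduction graph, which is precisely what computes the local component of $f^*\hat{\bb}$. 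Concretely, one can argue by uniqueness: both sides are integrable adelic metrics on the same line bundle, they agree on all fibers (by the admissibility/translation-invariance just described) and are compatible with the rigidification, hence by the rigidity part of Proposition~\ref{basicadmissible} (applied to $\hat{\bb}$, $\hat{\ll}^{\otimes 2}$, which are themselves admissible adelic bundles pulled back from $J\times J$ and $J$) they coincide. I expect the main obstacle to be bookkeeping the compatibility of the two recursions — the Deligne-pairing recursion~(\ref{local_recursive}) and the defining relation $\phi\colon\ll^{\otimes 4}\isom[2]^*\ll$ of the admissible structure — simultaneously at archimedean and non-archimedean places, i.e.\ making precise that ``fiberwise admissible + rigidified'' pins down the adelic metric uniquely; once that rigidity statement is in place the two displayed isometries follow formally from the fiberwise identities.
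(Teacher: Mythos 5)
Your first two steps match the paper's route: integrability of the pairings via Proposition \ref{projII}, and the canonical identification of the underlying line bundles with $f^*\bb$ resp.\ $f_0^*\ll^{\otimes 2}$ through the Poincar\'e bundle (the paper uses $\bb\isom(\mathrm{id}\times\lambda)^*\pp$ and Moret-Bailly's canonical isomorphism $\pair{\mm_0,\mm_1}^{\otimes-1}\isom(\mathrm{id}\times\lambda)^*\pp$). The gap is in the step that upgrades this isomorphism to an isometry. Your ``concrete'' argument --- both sides are integrable metrics on the same bundle, they ``agree on all fibers'' and are compatible with the rigidification, hence coincide by the rigidity of Proposition \ref{basicadmissible} --- does not work as stated. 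The two metrics to be compared live on a line bundle over $Y$, not on the fibers of $X\times Y\to Y$, so ``agreement on fibers'' is not a meaningful comparison; a rigidification only normalizes a metric at a single point; and Proposition \ref{basicadmissible} is a uniqueness statement for a symmetric bundle on an abelian variety equipped with an isomorphism $\ll^{\otimes 4}\isom[2]^*\ll$ --- no such structure exists on an arbitrary projective $Y$, so it cannot be invoked there, even ``applied to $\hat{\bb}$ and $\hat{\ll}^{\otimes 2}$''.

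What is missing is precisely the mechanism that makes uniqueness applicable: the reduction to the universal case and a $[2]$-functional equation for the Deligne-pairing metric. The paper assumes (harmlessly) a rational point $a\in X(k)$ and reduces to $Y=J\times J$, $f=\mathrm{id}$, with $\mm_0,\mm_1$ the universal bundles rigidified along $a$; it then notes that the squares $\hat{\mm}_i^{\otimes 2}$ are again fiberwise admissible, so by bilinearity of the Deligne pairing one gets isometries $[2]^*\pair{\hat{\mm}_0,\hat{\mm}_1}\isom\pair{\hat{\mm}_0^{\otimes 2},\hat{\mm}_1^{\otimes 2}}\isom\pair{\hat{\mm}_0,\hat{\mm}_1}^{\otimes 4}$, i.e.\ the pairing metric satisfies the same relation under $[2]$ as $\hat{\bb}^{\otimes -1}$; the uniqueness of Proposition \ref{basicadmissible} then forces the canonical isomorphism to be an isometry, and the second isometry follows by restricting to the diagonal in $J\times J$ (using $[2]^*\hat{\ll}\isom\hat{\ll}^{\otimes 4}$). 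Your alternative route --- verifying the isometry place by place via (\ref{local_recursive_infinite}) and (\ref{local_recursive}) --- could in principle replace this, but as written it only asserts the nontrivial local identities (that the Deligne pairing of fiberwise admissible degree-zero bundles reproduces the local metrics of $\hat{\bb}$ at every archimedean and non-archimedean place), and those assertions are essentially the content of the proposition itself. Without either the universal-case $[2]$-argument or an actual proof of those local identities, the proof is incomplete.
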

\begin{proof} The first statement follows from Proposition \ref{projII}. As to the isometries, without loss of generality we may assume that $X$ has a rational point $a \in X(k)$. It suffices to consider the universal case where $Y=J \times J$ and $f$ is the identity map, and the line bundles $\mm_0, \mm_1$ are the universal line bundles on $X \times J \times J$ rigidified along $a$. 
Let $\pp$ be the Poincar\'e bundle on $J \times \hat{J}$, and consider $\hat{J}$ as the functor of rigidified line bundles on $J$ with vanishing Chern class. Let $T$ be a test scheme over $k$ and let $x, y \in J(T)$.  Then 
\[ \begin{split} (x,y)^*(\mathrm{id} \times \lambda)^*\pp & = (x,[t_y^*\ll \otimes \ll^{\otimes -1} \otimes y^* \ll^{\otimes -1}])^*\pp \\ & = x^* t_y^*\ll \otimes x^*\ll^{\otimes -1} \otimes y^* \ll^{\otimes -1} \\
& = (x+y)^* \ll \otimes x^*\ll^{\otimes -1} \otimes y^* \ll^{\otimes -1} \\
& = (x,y)^*\bb \, , \end{split} \]
so that we have a canonical isomorphism of line bundles $\bb \isom (\mathrm{id} \times \lambda)^*\pp$ on $J \times J$.
From \cite[Section 2.9]{metrperm} we obtain a canonical isomorphism of line bundles
\[ \pair{ \mm_0, \mm_1 }^{\otimes -1} \isom (\mathrm{id} \times \lambda)^*\pp  \]
over $J \times J$. We claim that the induced isomorphism 
$\pair{ \mm_0, \mm_1 }^{\otimes -1} \isom \bb$
can be taken to be an isometry of adelic line bundles. Note that the induced metrics on the squares $\hat{\mm}_0^{\otimes 2}$ and $\hat{\mm}_1^{\otimes 2}$ are again fiberwise admissible. It follows that we have isometries
\[ [2]^* \pair{ \hat{\mm}_0, \hat{\mm}_1 } \isom \pair{ \hat{\mm}_0^{\otimes 2} , \hat{\mm}_1^{\otimes 2} } \isom \pair{ \hat{\mm}_0, \hat{\mm}_1 }^{\otimes 4} \, . \]
We also have an isometry $[2]^* \hat{\bb} \isom \hat{\bb}^{\otimes 4}$, hence by uniqueness we obtain an isometry $ \pair{ \hat{\mm}_0, \hat{\mm}_1 }^{\otimes -1} \isom \hat{ \bb}$. To obtain an isometry $ \pair{ \hat{\mm}_0,\hat{\mm}_0}^{\otimes -1} \isom  \hat{\ll}^{\otimes 2}$ we specialize the isometry just established to the diagonal in $J \times J$. 
\end{proof}
Assume from now on that $X$ is geometrically connected and has semistable reduction over $k$. An alternative way to obtain adelic metrics on all line bundles $\mm$ on $X$ is described in the paper \cite{zhadm}. For example, for each $p \in X(k)$ the line bundle $\oo(p)$ is endowed with a canonical adelic metric $((\|\cdot\|_{\oo(p),v})_v)$ obtained by using Green's functions. More precisely, if $v \in M(k)_0$ let $R \colon X(\bar{k}_v) \to \Gamma_v$ denote the reduction map to the dual graph $\Gamma_v$ of the reduction of $X$ at $v$, and let $g_{\mu,v}$ denote Zhang's Green's function on $\Gamma_v$. Then for all $p\neq x \in X(\bar{k}_v)$ we have $\log \|1\|_{\oo(p),v}(x) =g_{\mu,v}(R(p),R(x))$.  If $v \in M(k)_\infty$ let $g_{\mu,v}$ denote the Arakelov-Green's function on the compact Riemann surface $X_v(\cc)$. Then for all $p\neq x \in X_v(\cc)$ we have $\log \|1\|_{\oo(p),v}(x) =g_{\mu,v}(p,x)$.

We shortly denote the resulting adelic line bundle on $X$ by $\hat{p}$. The dualizing sheaf $\omega$ is also endowed with a canonical adelic metric, denoted in this paper by $\hat{\omega}$. Importantly, for each $p \in X(k)$ the canonical residue map $p^* (\hat{\omega} \otimes \hat{p} ) \isom \oo_{\Spec k}$ is an isometry of adelic line bundles on $\Spec k$. It is stated in \cite[Section~4.7]{zhadm} and worked out in more detail in \cite{heinz} that the adelic metrics on line bundles on $X$ obtained by this second method are precisely the admissible metrics on line bundles on $X$ described above by using pullbacks of $\hat{\ll}$ from the jacobian $J$.

Let $\pi \colon X^3 \to X^2$ denote the projection onto the first two factors, and let $s_0, s_1 \colon X^2 \to X^3$ denote the tautological sections of this projection. These give rise to a canonical relative degree zero line bundle $\oo(s_0-s_1)$ on $X^3$ over $X^2$. We have canonical isomorphisms of line bundles
\[ \oo(2 \, \Delta) \otimes p_1^*\omega \otimes p_2^* \omega \isom \langle \oo(s_0-s_1), \oo(s_0-s_1) \rangle^{\otimes -1} \isom \delta^* \ll^{\otimes 2} \, , \]
where $\delta \colon X^2 \to J$ denotes the difference map $(x,y) \mapsto [x-y]$. From the given admissible structure on $\ll$ and the canonical admissible line bundle $\hat{\omega}$ we then obtain an integrable structure $\hat{\Delta}$ on $\oo(\Delta)$. As it turns out, this structure does not depend on the choice of semi-canonical divisor $D$, and hence is canonical. We refer to \cite[Section~3.5]{zhgs} for a slightly different approach.
By restriction to horizontal or vertical slices $\{p\} \times X$ or $X \times \{p\}$ of $X^2$, where $p \in X(k)$, the integrable line bundle $\hat{\Delta}$ yields the canonical admissible line bundle $\hat{p}$. Finally we have that the canonical isomorphism $\Delta^*\oo(\Delta) \isom \omega^{\otimes -1}$ given by adjunction induces an isometry 
\begin{equation} \label{adjunction} \Delta^* \hat{\Delta} \isom \hat{\omega}^{\otimes -1} 
\end{equation}
of adelic line bundles on $X$. 

\section{Local invariants}  \label{phi_explained}

In this section we define the local $\varphi$-invariants appearing in Theorem \ref{mainintro}, following \cite{zhgs}, and discuss some of their properties.  
We start by discussing the non-archimedean $\varphi$-invariant. A metrized graph is a compact connected metric space $\Gamma$ such that for each $x \in \Gamma$ there exist $n \in \zz_{\geq 1}$ and $\epsilon \in \rr_{>0}$ such that $x$ has an open neighborhood isometric to the star-shaped set
\[ \{ z \in \cc \, : \, z = t \mathrm{e}^{2\pi i k/n} \,\, \textrm{for some} \,\, 0 \leq t < \epsilon \,\, \textrm{and some} \,\, k \in \zz \} \]
endowed with the path metric. Let $\Gamma$ be a metrized graph. A divisor on $\Gamma$ is an element of $\zz^{(\Gamma)}$; note that a divisor has naturally a degree in $\zz$. 

Let $g \geq 1$ be an integer. A polarization on $\Gamma$ is a divisor $K \in \zz^{(\Gamma)}$ of degree $2g-2$. We then call $g$ the genus of the polarized metrized graph $\bar{\Gamma}=(\Gamma,K)$. When $\Gamma$ is a metrized graph, then from (\ref{crmeasure}) we have a canonical measure $\mu_{\mathrm{can}}$ on $\Gamma$. Assume that a polarization $K$ is given so that $\bar{\Gamma}=(\Gamma,K)$ has genus $g$. Then, following \cite{zhadm}, one considers the admissible measure $\mu_a$ on $\bar{\Gamma}$ given by the identity
\begin{equation} \label{canandadmmeasure} \mu_a = \frac{1}{2g} \left(\delta_K + 2\mu_{\mathrm{can}} \right) \, . 
\end{equation}
The Green's function $g_\mu$ associated to $\mu_a$ is the unique piecewise quadratic function on $\Gamma \times \Gamma$ uniquely determined by the two conditons
\begin{equation} \label{green} \Delta_y \, g_\mu(x,y) = \delta_x(y) - \mu_a(y) \, , \quad \int_\Gamma g_\mu(x,y) \, \mu_a(y) =0 
\end{equation}
for all $x \in \Gamma$. 
Let $\delta(\Gamma)$ denote the total length of $\Gamma$. Then we set 
\begin{equation} \label{phiinv} \varphi(\bar{\Gamma}) = -\frac{1}{4} \delta(\Gamma) + \frac{1}{4} \int_{\Gamma}
g_{\mu}(x,x)((10g+2) \mu_a - \delta_K) \, . 
\end{equation}
The invariant $\varphi(\bar{\Gamma})$ is closely related to the epsilon-invariant \cite{mosharp} \cite{zhadm} of $\bar{\Gamma}$, given by
\begin{equation} \label{epsinv} \vareps(\bar{\Gamma}) = \int_\Gamma g_\mu(x,x)((2g-2)\mu_a + \delta_K ) \, . 
\end{equation}
Now let $X$ be a smooth projective geometrically connected curve of genus $g \geq 1$ with semistable reduction over a number field $k$. Let $v \in M(k)_0$ and let $\Gamma_v$ be the dual graph of the reduction of $X$ at $v$. Then by 
\cite[Section~2.2]{cl} \cite[Section~2.1]{zhadm} the metrized graph $\Gamma_v$ comes with a canonical polarization, resulting in a polarized metrized graph $\bar{\Gamma}_v=(\Gamma_v,K_v)$ of genus $g$. We then set $\varphi(X_v)=\varphi(\bar{\Gamma}_v)$. Likewise we set $\vareps(X_v)=\vareps(\bar{\Gamma}_v)$. 

We have $\varphi(X_v)=0$ for $g=1$, and also $\varphi(X_v)=0$ if $X$ has good reduction at $v$. In general we have $\varphi(X_v) \geq 0$. In fact, we have a more precise lower bound. Let $\delta_0(X_v)$ denote the number of non-separating geometric double points on the reduction of $X$ at $v$, and let $\delta_i(X_v)$ for $i=1,\ldots,[g/2]$ denote the number of geometric double points on the reduction of $X$ at $v$ whose local normalization has two connected components, one of arithmetic genus $i$, and one of arithmetic genus $g-i$.
In  \cite{zhgs} Zhang conjectured, and proved in the special case of so-called elementary graphs, for $g \geq 2$ a lower bound
\[ \varphi(X_v) \geq  c(g) \delta_0(X_v) + \sum_{i=1}^{[g/2]} \frac{2i(g-i)}{g}\delta_i(X_v) \, ,  \]
where $c(g)$ is some positive constant depending only on $g$. 
Zhang's conjecture was proved in low genera $g=2, 3, 4$ by X. Faber \cite[Theorem~3.4]{fab}, and by Z. Cinkir \cite[Theorem~2.11]{ciinv} for arbitrary $g \geq 2$. Cinkir showed that one can even take $c(2)=\frac{1}{27}$ and $c(g)=\frac{(g-1)^2}{2g(7g+5)}$ for $g \geq 3$, but these values can most likely be improved. A list of the non-archimedean $\varphi$-invariants of all polarized metrized graphs in genus two is provided in \cite{djadm}, and for all polarized metrized graphs in genus three in \cite{ciadm}. The case of polarized metrized graphs arising as reduction graphs of semistable hyperelliptic curves is completely understood by Yamaki's work \cite{yags}. 

We next discuss the archimedean $\varphi$-invariant. This invariant has been independently introduced and studied by N. Kawazumi in \cite{kawhandbook} \cite{kaw} (for the invariant $a$ defined in \cite{kawhandbook} \cite{kaw} we have $2\pi \,a = \varphi$).
Let $C$ be a compact and connected Riemann surface of genus $g \geq 1$. Let $\phi_\ell$ for $\ell=1, 2,\ldots$ be the normalized real eigenforms of the Arakelov Laplacian \cite[p.~393]{fa} on  $C$ and let  
$\lambda_1\leq \lambda_2 \leq \ldots$ be its positive eigenvalues. Let $(\omega_1,\ldots,\omega_g)$ be an orthonormal basis for the hermitian inner
product $(\omega,\eta) \mapsto \frac{i}{2} \int_{C} \omega \, \bar{\eta}$ on the space of holomorphic $1$-forms on $C$.
Then we set
\[ \varphi(C) = \sum_{\ell=1}^\infty \frac{2}{\lambda_\ell} \sum_{m,n=1}^g  \left| 
\int_{C} \phi_\ell \,\omega_m \, \bar{\omega}_n \right|^2 \, . \]
This invariant is well-defined and independent of the choice of $(\omega_1,\ldots,\omega_g)$.  If $X$ is a smooth projective geometrically connected curve of genus $g \geq 1$ defined over a number field $k$, and $v \in M(k)_\infty$,  we shortly write $\varphi(X_v)$ for $\varphi(X_v(\cc))$.

Note that we clearly have $\varphi(C) \geq 0$. In fact we have $\varphi(C)=0$ if $g=1$, and $\varphi(C)>0$ else, cf. \cite[Corollary 1.2]{kaw} or \cite[Remark following Proposition~2.5.3]{zhgs}. A fast method to compute $\varphi(C)$ when $g=2$ is described in \cite{pio}.

Let $(J,\lambda)$ denote the jacobian of $C$. Let $\ll$ be a symmetric ample line bundle on $J$ inducing the polarization $\lambda$, endowed with an admissible metric $\|\cdot\|$ (that is, a smooth hermitian metric whose curvature form is translation-invariant). Let $s$ be any non-zero global section of $\ll$. Then following \cite{aut} we put  
\[ I(J,\lambda) = -\int_{J} \log \|s\| \, \mu + \frac{1}{2} \log \int_{J} \|s\|^2 \, \mu\, , \]
where $\mu$ is the Haar measure on $J$. We note that $I(J,\lambda)$ is independent of the choice of $\ll$ and of $s$, moreover we have $I(J,\lambda)>0$ by Jensen's inequality.
A fundamental identity relating $\varphi(C)$ with the Faltings delta-invariant $\delta_F(C)$ from \cite[p.~402]{fa} and the invariant $I(J,\lambda)$  has recently been found by R.~Wilms in his paper \cite{wi}. 
\begin{thm} \label{wilmsrewrite} (R. Wilms) Let $\delta_F(C)$ be the Faltings delta-invariant of the compact connected Riemann surface $C$. Let $\kappa_0=\log(\pi \sqrt{2})$. Then the identity
\[ \delta_F(C) - 4g\log(2\pi)= -12\,\kappa_0 g + 24\,I(J,\lambda)+ 2\,\varphi(C)  \]
holds.
\end{thm}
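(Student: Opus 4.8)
The plan is to route all three invariants through the Riemann theta function. Observe first that, by its very definition, $I(J,\lambda)$ is built from the norm of a section of the principal polarization bundle: taking that section to be the Riemann theta function $\theta$ and the metric to be the canonical translation-invariant one, one has $I(J,\lambda)=-\int_{J}\log\|\theta\|\,d\mu$ up to an explicit additive constant depending only on $g$ (the $L^2$-term $\tfrac12\log\int_J\|\theta\|^2\,d\mu$ being, in the standard normalization, an explicit combination of $\log 2$ and $\log\pi$). Hence the assertion is equivalent to an identity expressing $\delta_F(C)+2\,\varphi(C)$ through $\int_{\Pic^{g-1}(C)}\log\|\theta\|_{\mathrm{Ar}}\,d\mu$ together with a universal constant of the shape $c\cdot g$, where $c$ is a combination of $\log 2$ and $\log\pi$.

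The $\delta_F$-contribution is essentially classical. One invokes a closed formula for the Faltings delta-invariant of $C$ in terms of $\int_{\Pic^{g-1}(C)}\log\|\theta\|_{\mathrm{Ar}}\,d\mu$ and the Arakelov--Green function of $C$; such a formula can be read off by comparing, for an arithmetic surface with semistable reduction, Faltings' arithmetic Noether formula with Moret-Bailly's ``key formula'' for the associated principally polarized abelian scheme -- the archimedean discrepancy between the two being precisely a combination of $\delta_F$ and $-\int\log\|\theta\|_{\mathrm{Ar}}$ -- or more directly from de Jong's explicit integral formula for $\delta_F$.

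The heart of the matter, and the genuinely new input of \cite{wi}, is a companion formula for the archimedean Kawazumi--Zhang invariant $\varphi(C)$ in terms of the same theta data. There are two natural ways to try to establish it. The \emph{analytic} route unfolds the spectral definition of $\varphi(C)$ into an integral over $C\times C$ of a product of Arakelov--Green functions, and then rewrites this, via the heat kernel/resolvent of the Arakelov Laplacian and the comparison between the Arakelov metric on $\oo(\Theta)$ near $\Theta$ and the curve-theoretic Green function, in terms of $\int_{\Pic^{g-1}(C)}\log\|\theta\|_{\mathrm{Ar}}\,d\mu$. The \emph{variational} route instead identifies the putative formula by comparing curvatures on $\mathcal{M}_g(\cc)$ -- $\partial\bar\partial\,\delta_F$ is given by the Faltings--Moret-Bailly formula, $\partial\bar\partial\, I$ is pulled back from $\mathcal{A}_g$, and $\partial\bar\partial\,\varphi$ is Kawazumi's computation, a rational multiple of a Hain--Reed class on $\overline{\mathcal{M}}_g$ -- so that the combination $\delta_F-24\,I-2\,\varphi$ has vanishing curvature and is therefore constant on the connected space $\mathcal{M}_g(\cc)$; one then proves the identity by induction on $g$, matching the degeneration behaviour of both sides along a boundary divisor of $\overline{\mathcal{M}}_g$ against the degeneration asymptotics of $\delta_F$ (Jorgenson--Kramer, Wentworth) and of $\varphi$ (the estimates of \cite{wi}).

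Either way, one is finally left to add the formulas for $\delta_F$ and $\varphi$, absorb the theta-integrals into $I(J,\lambda)$, and track the accumulated powers of $2$ and $\pi$; the base case $g=1$, where $\varphi\equiv 0$ and both $\delta_F$ and $I$ are classically expressed through the Dedekind $\eta$- and Jacobi $\theta$-functions, fixes the remaining constant and one reads off $-12\,\kappa_0\,g+4g\log(2\pi)$. The main obstacle is the third step: producing a workable closed expression for $\varphi(C)$ -- equivalently, controlling the spectral sum well enough to identify it with a theta-function integral, or computing $\partial\bar\partial\,\varphi$ with the exact rational coefficient and matching it against those of $\delta_F$ and $I$. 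Once the structural identity $\delta_F-24\,I-2\,\varphi=\mathrm{const}$ is secured, the remaining constant-chasing is delicate but routine.
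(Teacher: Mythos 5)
Your proposal does not close the argument: it is a program, not a proof. The statement you are asked to prove is, after a change of normalization, exactly the main theorem of \cite{wi}, and the paper's proof consists of citing \cite[Theorem~1.1]{wi}, namely
\[ \delta_F(C) = -24\int_{J}\log\|\theta\|\,\mu + 2\,\varphi(C) - 8g\log(2\pi)\,, \]
and then converting the theta-integral into $I(J,\lambda)$. Everything you describe as ``the heart of the matter'' --- a closed expression for $\varphi(C)$ in theta-terms, or the constancy of $\delta_F-24\,I-2\,\varphi$ on $\mathcal{M}_g(\cc)$ established by comparing $\partial\bar\partial$-curvatures and boundary degenerations, followed by fixing the constant at $g=1$ --- is precisely the content of Wilms's theorem, and you never carry it out: you offer two candidate strategies and explicitly flag that the main obstacle (controlling the spectral sum for $\varphi$, or pinning the exact rational curvature coefficients and degeneration asymptotics) remains open. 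As it stands, the central identity on which your whole reduction rests is asserted, not proven, so there is a genuine gap; alternatively, if your intention was to use \cite{wi} as a black box (as the paper does, and as the attribution ``(R.~Wilms)'' invites), then the long sketch of how one might reprove it is beside the point and the actual remaining work is much smaller than you suggest.

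That remaining work is the one place where your write-up is too vague to be checked. You say the $L^2$-term $\tfrac12\log\int_J\|s\|^2\,\mu$ is ``an explicit combination of $\log 2$ and $\log\pi$,'' but the proof requires its exact value: with $\theta$ the Riemann theta function and $\|\theta\|$ the Faltings norm, one has $\int_J\|\theta\|^2\,\mu = 2^{-g/2}$ (see \cite[Proposition~2.5.6]{bl}), so that, writing $\log\|H\|(J,\lambda)=\int_J\log\|\theta\|\,\mu$, one gets $-\log\|H\|(J,\lambda)=I(J,\lambda)+\tfrac{g}{4}\log 2$ --- only a $\log 2$ contribution, no $\log\pi$. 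Substituting this into Wilms's identity gives $\delta_F(C)=24\,I(J,\lambda)+6g\log 2+2\,\varphi(C)-8g\log(2\pi)$, and the elementary bookkeeping $6g\log 2-12g\log(2\pi)=-12g\log(\pi\sqrt2)=-12\kappa_0 g$ yields the stated formula; no $g=1$ base case or constant-chasing on moduli space is needed. Your proposal neither performs this normalization nor verifies the resulting constants, so even granting the structural identity it does not yet deliver the theorem as stated.
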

\begin{proof} This is the main result of \cite{wi}, but slightly rewritten. Let $\theta$ be the Riemann theta-function of $(J,\lambda)$, seen as a global section of a suitable symmetric ample line bundle $\ll$ representing $\lambda$. Endow $\ll$ with its standard admissible metric giving $\theta$ the norm $\|\theta\|$ defined on \cite[p.~401]{fa}. Let then $ \log \|H\|(J,\lambda) = \int_{J} \log \|\theta\|\, \mu $. Then \cite[Theorem~1.1]{wi} states that
\begin{equation} \label{wilmss} \delta_F(C) = -24\,\log\|H\|(J,\lambda) + 2\,\varphi(C)-8g\log(2\pi) \, . 
\end{equation}
From \cite[Proposition~2.5.6]{bl} it follows that $ \int_{J} \|\theta\|^2 \, \mu = 2^{-g/2}$.
This gives
\begin{equation} \label{IandH} -\log\|H\|(J,\lambda) = I(J,\lambda) + \frac{g}{4}\log 2 \, . 
\end{equation}
The required identity follows by combining (\ref{wilmss}) and (\ref{IandH}).
\end{proof}
Now, let $X$ be a smooth projective geometrically connected curve of genus $g \geq 1$ with semistable reduction over a number field $k$. Let $(J,\lambda)$ denote its jacobian. We have the following arithmetic consequence of Wilms's result.
\begin{cor}  \label{corofwilms}  Let $\h_F(J)$ denote the stable Faltings height of $J$, and let $\pair{\hat{\omega},\hat{\omega}}$ denote the self-intersection of the canonical admissible relative dualizing sheaf on $X$. Then the identity 
\[ \begin{split}  [k:\qq] \, \h_F(J) = & \,\,\frac{1}{12} \pair{\hat{\omega},\hat{\omega}} + \frac{1}{12} \sum_{v \in M(k)_0} (\delta(X_v)+\vareps(X_v))\log Nv - \kappa_0 g \, [k:\qq]\\ & + 2 \sum_{v \in M(k)_\infty} I(J_v,\lambda_v) + \frac{1}{6} \sum_{v \in M(k)_\infty} \varphi(X_v) \end{split} \]
holds.
\end{cor}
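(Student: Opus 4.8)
The plan is to combine three ingredients: the arithmetic Noether formula for the minimal regular model of $X$, Zhang's comparison between the Arakelov and the admissible self-intersection of the relative dualizing sheaf, and Wilms's identity from Theorem~\ref{wilmsrewrite}.

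First I would set up the Noether formula. Let $\pi \colon \mathcal{X} \to \Spec O_k$ be the minimal regular model of $X$ over the ring of integers $O_k$ of $k$. Since $X$ has semistable reduction over $k$, the line bundle $\pi_* \omega_{\mathcal{X}/O_k}$ on $\Spec O_k$, endowed with the Faltings $L^2$-metric at the complex places, is canonically isometric to the Hodge bundle of the N\'eron model of $J$; hence its normalized arithmetic degree equals $[k:\qq]\,\h_F(J)$. With this identification the arithmetic Noether formula (Faltings, Moret-Bailly) reads
\[ 12\,[k:\qq]\,\h_F(J) = \pair{\omega_{\Ar},\omega_{\Ar}} + \sum_{v \in M(k)_0} \delta(X_v) \log Nv + \sum_{v \in M(k)_\infty} \bigl( \delta_F(X_v) - 4g \log(2\pi) \bigr) \, , \]
where $\pair{\omega_{\Ar},\omega_{\Ar}}$ is the Arakelov self-intersection of $\omega_{\mathcal{X}/O_k}$, the number $\delta(X_v)$ is the total length of the dual graph at $v$, and $\delta_F(X_v)$ is the Faltings delta-invariant of $X_v(\cc)$.

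Next I would pass from $\pair{\omega_{\Ar},\omega_{\Ar}}$ to $\pair{\hat{\omega},\hat{\omega}}$. The archimedean admissible metric on $\omega$ agrees with the Arakelov metric, so by \cite[Section~5.4]{zhadm} the two self-intersections differ only by the finite-place $\vareps$-invariants:
\[ \pair{\omega_{\Ar},\omega_{\Ar}} = \pair{\hat{\omega},\hat{\omega}} + \sum_{v \in M(k)_0} \vareps(X_v) \log Nv \, . \]
Then I would substitute, for every $v \in M(k)_\infty$, Wilms's identity in the form $\delta_F(X_v) - 4g\log(2\pi) = -12\,\kappa_0 g + 24\,I(J_v,\lambda_v) + 2\,\varphi(X_v)$ from Theorem~\ref{wilmsrewrite}, using that $\# M(k)_\infty = [k:\qq]$ since $M(k)_\infty$ is the set of complex embeddings of $k$. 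Plugging both expressions into the Noether formula, grouping the finite-place contributions as $\sum_{v \in M(k)_0} (\delta(X_v) + \vareps(X_v)) \log Nv$ and the archimedean contributions as $-12\,\kappa_0 g\,[k:\qq] + 24 \sum_{v \in M(k)_\infty} I(J_v,\lambda_v) + 2 \sum_{v \in M(k)_\infty} \varphi(X_v)$, and dividing by $12$, yields precisely the claimed identity.

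The one step that requires care is the matching of archimedean normalizations: one must use a version of the arithmetic Noether formula, a normalization of the stable Faltings height $\h_F(J)$ via the $L^2$-metrized Hodge bundle, and normalizations of $\delta_F$ and of $\kappa_0 = \log(\pi \sqrt{2})$ that are mutually consistent, so that the archimedean local term of the Noether formula is exactly the combination $\delta_F(X_v) - 4g\log(2\pi)$ appearing on the left-hand side of Theorem~\ref{wilmsrewrite}; once this is arranged, all the $\pi$'s and $\log 2$'s cancel on the nose. It is also worth checking the case $g = 1$ directly, where $\varphi(X_v) = 0$ at every place and the identity reduces to the classical relation between the Faltings height of an elliptic curve and the local invariants.
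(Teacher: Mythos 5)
Your proposal is correct and follows essentially the same route as the paper: the arithmetic Noether formula for the minimal regular model, Zhang's comparison $\pair{\hat{\omega},\hat{\omega}} = \pair{\bar{\omega},\bar{\omega}} - \sum_{v \in M(k)_0} \vareps(X_v)\log Nv$, and substitution of Wilms's identity from Theorem~\ref{wilmsrewrite} at the archimedean places, using $\#M(k)_\infty = [k:\qq]$. Your extra remarks on matching normalizations and the $g=1$ sanity check are sensible but do not change the argument.
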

\begin{proof}  Let $\pair{\bar{\omega},\bar{\omega}}$ be the usual self-intersection of the relative dualizing sheaf of the minimal regular model of $X$ over the ring of integers of $k$. The Noether formula \cite[Theorem~6]{fa} \cite[Th\'eor\`eme~2.5]{mb} then reads
\[ \begin{split} 12\,[k:\qq] \, \h_F(J) = & \,\, \pair{\bar{\omega},\bar{\omega}} + \sum_{v \in M(k)_0} \delta(X_v)\log Nv \\ &+ \sum_{v \in M(k)_\infty}  \delta_F(X_v)   -4g\,[k:\qq]\,\log(2\pi)   \, .  \end{split}
\]
By \cite[Theorem 5.5]{zhadm} the non-archimedean epsilon-invariants give the difference between $\pair{\bar{\omega},\bar{\omega}}$ and $\pair{\hat{\omega},\hat{\omega}}$, more precisely we have
\[ \label{zhangeps} \pair{\hat{\omega},\hat{\omega}} = \pair{\bar{\omega},\bar{\omega}} - \sum_{v \in M(k)_0} \varepsilon(X_v) \log Nv \, . 
\]
We thus find that
\[ \begin{split} 12\,[k:\qq] \, \h_F(J) =& \,\, \pair{\hat{\omega},\hat{\omega}} + \sum_{v \in M(k)_0} (\delta(X_v)+\vareps(X_v))\log Nv \\ & + \sum_{v \in M(k)_\infty}  \delta_F(X_v)   -4g\,[k:\qq]\,\log(2\pi)   \, . \end{split}
\]
Combining with Theorem \ref{wilmsrewrite} we obtain the identity stated in the corollary.
\end{proof}
We obtain Theorem \ref{thetaheight} in the case of potentially everywhere good reduction.
\begin{cor} \label{realcatalyzer} Assume that $J$ has everywhere good reduction. Then the identity 
\[ 2g \,[k:\qq] \, \h'_\ll(\Theta) = \frac{1}{12} \pair{\hat{\omega},\hat{\omega}} + \frac{1}{6} \sum_{v \in M(k)} \varphi(X_v) \log Nv \]
holds.
\end{cor}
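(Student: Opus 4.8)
The plan is to eliminate the Faltings height of $J$ and the archimedean invariants $I(J_v,\lambda_v)$ between the two expressions for $\h_F(J)$ at our disposal, and then to reduce what remains to an identity for polarized metrized graphs of compact type. First I would apply Autissier's formula (\ref{aut}) — legitimate precisely because $J$ has everywhere good reduction over $k$ — which, after clearing the denominator $[k:\qq]$ and using $\log Nv=1$ for $v\in M(k)_\infty$, reads
\[ [k:\qq]\,\h_F(J) = 2g\,[k:\qq]\,\h'_\ll(\Theta) - \kappa_0 g\,[k:\qq] + 2\sum_{v\in M(k)_\infty} I(J_v,\lambda_v) \, . \]
On the other hand, Corollary \ref{corofwilms} expresses $[k:\qq]\,\h_F(J)$ as $\frac1{12}\pair{\hat\omega,\hat\omega}$ plus the finite-place contribution $\frac1{12}\sum_{v\in M(k)_0}(\delta(X_v)+\varepsilon(X_v))\log Nv$, plus exactly the same archimedean terms $-\kappa_0 g[k:\qq]+2\sum_{v\in M(k)_\infty} I(J_v,\lambda_v)$, plus $\frac16\sum_{v\in M(k)_\infty}\varphi(X_v)$. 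Equating the two and cancelling the common terms $-\kappa_0 g[k:\qq]$ and $2\sum_{v\in M(k)_\infty} I(J_v,\lambda_v)$ leaves
\[ 2g\,[k:\qq]\,\h'_\ll(\Theta) = \frac1{12}\pair{\hat\omega,\hat\omega} + \frac1{12}\sum_{v\in M(k)_0}\bigl(\delta(X_v)+\varepsilon(X_v)\bigr)\log Nv + \frac16\sum_{v\in M(k)_\infty}\varphi(X_v) \, . \]

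It therefore suffices to prove that $\delta(X_v)+\varepsilon(X_v)=2\,\varphi(X_v)$ for every finite place $v$. Here the good reduction hypothesis enters through the standard fact that $J$ has good reduction at $v$ if and only if the dual graph $\Gamma_v$ of the reduction of $X$ at $v$ is a tree, i.e., has vanishing first Betti number (the toric rank of the special fiber of the N\'eron model of $J$ equals $b_1(\Gamma_v)$). Thus the claim becomes the purely metric-graph statement: for a polarized metrized graph $\bar\Gamma=(\Gamma,K)$ of genus $g\geq 1$ whose underlying graph $\Gamma$ is a tree, one has $\delta(\Gamma)+\varepsilon(\bar\Gamma)=2\,\varphi(\bar\Gamma)$, with $\varepsilon$ and $\varphi$ as in (\ref{epsinv}) and (\ref{phiinv}). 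This is the step I expect to be the main obstacle. I would prove it by direct computation: for a tree the admissible measure $\mu_a$ of (\ref{canandadmmeasure}) is supported on the vertices and the Green's function $g_\mu$ of (\ref{green}) is affine along each edge, so one can first settle the case of a single segment — a segment of length $\ell$ carrying a polarization of the shape $(2g_1-1)\delta_p+(2g_2-1)\delta_q$ with $g_1+g_2=g$ gives $\delta=\ell$, $\varphi=2g_1g_2\ell/g$ and $\varepsilon=(4g_1g_2/g-1)\ell$, so that indeed $\delta+\varepsilon=2\varphi$ — and then pass to a general tree by induction on the number of edges, keeping track of how $\delta$, $\varepsilon$ and $\varphi$ change under attaching a pendant edge or contracting an edge. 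Alternatively the identity can be extracted from the known relations between $\varphi$, $\varepsilon$ and Zhang's $\lambda$-invariant (see \cite{zhgs}, \cite{ciinv}).

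Substituting $\delta(X_v)+\varepsilon(X_v)=2\varphi(X_v)$ into the displayed formula and using once more that $\log Nv=1$ at the archimedean places, the right-hand side becomes $\frac1{12}\pair{\hat\omega,\hat\omega}+\frac16\sum_{v\in M(k)_0}\varphi(X_v)\log Nv+\frac16\sum_{v\in M(k)_\infty}\varphi(X_v)\log Nv = \frac1{12}\pair{\hat\omega,\hat\omega}+\frac16\sum_{v\in M(k)}\varphi(X_v)\log Nv$, which is the asserted identity.
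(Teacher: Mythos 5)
Your proposal is correct and follows the same skeleton as the paper's proof: eliminate $\h_F(J)$ and the common archimedean terms between Autissier's formula (\ref{aut}) (valid here by the good reduction hypothesis) and Corollary \ref{corofwilms}. Where you genuinely add something is the finite-place step. The paper's proof writes, under the phrase ``applying Corollary \ref{corofwilms}'', a formula in which the term $\frac{1}{12}\sum_{v\in M(k)_0}(\delta(X_v)+\vareps(X_v))\log Nv$ has already been replaced by $\frac{1}{6}\sum_{v\in M(k)_0}\varphi(X_v)\log Nv$; this silent substitution is exactly the identity $\delta(X_v)+\vareps(X_v)=2\,\varphi(X_v)$ at places where the reduction graph is a tree (equivalently, by the criterion you quote, where $J$ has good reduction at $v$), and it is genuinely needed, since the corollary is later applied (in the proof of Theorem \ref{catalyzer}) to curves whose jacobian, but not necessarily the curve itself, has everywhere good reduction. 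You correctly isolate this as the main content. In the paper the identity is available without circularity from Section \ref{proofautforjac}: Proposition \ref{graph1} gives $\frac{1}{12}(\delta+\vareps-2\varphi)=\frac{1}{8}\delta-\frac{1}{2}\tau$ for any polarized metrized graph, and Proposition \ref{graph2} says the right-hand side vanishes precisely for trees; both are purely graph-theoretic and could simply be cited at this point. Your explicit check for a single polarized segment is correct ($\varphi=2g_1g_2\ell/g$, $\vareps=(4g_1g_2/g-1)\ell$, hence $\delta+\vareps=2\varphi$), but the passage to a general tree is only sketched; to make it self-contained you should either carry out the induction via the additivity of $\delta$, $\vareps$, $\varphi$ under one-point unions of polarized metrized graphs (with the polarizations adjusted as in \cite{zhgs}), or invoke Propositions \ref{graph1} and \ref{graph2} (or the cited literature) outright. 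With that step supplied, your argument closes and in fact documents a point the paper's own proof leaves implicit.
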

\begin{proof}  As we assume that $J$ has everywhere good reduction, Autissier's result (\ref{aut}) yields that
\[  \h_F(J) = 2g \, \h'_\ll(\Theta) - \kappa_0 g + \frac{2}{[k:\qq]} \sum_{v \in M(k)_\infty} I(J_v,\lambda_v) \log Nv \, . 
\]
On the other hand, applying Corollary \ref{corofwilms} we have
\[  [k:\qq] \, \h_F(J) = \frac{1}{12} \pair{\hat{\omega},\hat{\omega}} - \kappa_0 g\,[k:\qq] + 2 \sum_{v \in M(k)_\infty} I(J_v,\lambda_v) + \frac{1}{6} \sum_{v \in M(k)} \varphi(X_v)\log Nv \, . \]
We find the stated identity by combining these two equations.
\end{proof}
In Section \ref{proofautforjac} we will use this special case to prove Theorem \ref{thetaheight} in general.

\section{N\'eron-Tate height of a tautological cycle} \label{NTheightspecial}

In this section we derive from Proposition \ref{delpairingbiext} an explicit formula for the N\'eron-Tate height of a tautological cycle $Z_{m,\alpha}$ on a jacobian, in terms of admissible adelic intersection theory. Let $g \geq 1$ be an integer and let $X$ be a smooth projective geometrically connected curve of genus $g$ with semistable reduction over a number field $k$. Let $r$ be an integer satisfying $0 \leq r \leq g$, and let $m=(m_1,\ldots,m_r)$ be an $r$-tuple of non-zero integers and let $D$ be a divisor on $X$ with $\deg D =  \sum m_i $. Let $(J,\lambda)$ denote the jacobian of $X$.
Consider the map
\[ f =f_{m,D} \colon X^r \longrightarrow J \, , \quad (x_1,\ldots,x_r) \mapsto 
\left[\sum_{i=1}^r m_i x_i - D\right]  \, ,\]
and denote by $Z=Z_{m,D}$ the image of $f$. We note that the map $f$ is generically finite.  For $i,j=1,\ldots,r$ let $p_i \colon X^r \to X$ denote the projection on the $i$-th factor, and let $p_{ij} \colon X^r \to X^2$ denote the projection on the $i$-th and $j$-th factor. Let $\hat{\omega}$ denote the canonical line bundle $\omega$ of $X$ equipped with its canonical admissible adelic metric, and let $\hat{\Delta}$ denote the line bundle associated to the diagonal $\Delta$ on $X^2$ equipped with its canonical integrable metric, both as discussed at the end of Section \ref{admissible}. Denote by $\hat{D}$ the line bundle $\oo(D)$ on $X$ endowed with its canonical admissible metric. Write $\hat{\omega}_i = p_i^*\hat{\omega}$, $\hat{\Delta}_{ij}=p_{ij}^*\hat{\Delta}$ and $\hat{D}_i=p_i^*\hat{D}$. Let $\ll$ be a symmetric ample line bundle on $J$ defining the principal polarization $\lambda$.
\begin{thm} \label{main} Each of $\hat{\omega}_i$, $\hat{\Delta}_{ij}$ and $\hat{D}_i$ is an integrable adelic line bundle on $X^r$, and the identity
\[ \h'_\ll(Z) = \frac{\left\langle \left(\sum_{i=1}^r m_i^2 \hat{\omega}_i - 2 \sum_{i<j} m_im_j \hat{\Delta}_{ij} + 2 \sum_{i=1}^r m_i \hat{D}_i - \pair{\hat{D},\hat{D}} \right)^{r+1} \bigg| X^r \right\rangle   }{2^{r+1} \deg (f) \deg_\ll (Z) (r+1)[k:\qq]} 
\]
holds, where $\deg(f)$ is the degree of $f$.
\end{thm}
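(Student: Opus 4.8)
The plan is to combine Definition~\ref{height}, the projection formula of Proposition~\ref{projection}, and the Deligne-pairing machinery of Proposition~\ref{delpairingbiext}. First, since $f$ is generically finite onto $Z$ we have $f_*[X^r]=\deg(f)\,[Z]$, so Proposition~\ref{projection} gives $\langle\hat\ll^{r+1}\,|\,Z\rangle=\deg(f)^{-1}\langle (f^*\hat\ll)^{r+1}\,|\,X^r\rangle$, and hence by Definition~\ref{height} it suffices to compute $f^*\hat\ll$ as an integrable adelic line bundle on $X^r$. Integrability of $\hat\omega_i=p_i^*\hat\omega$, $\hat\Delta_{ij}=p_{ij}^*\hat\Delta$ and $\hat D_i=p_i^*\hat D$ is then automatic, since $\hat\omega$ and $\hat D$ are admissible on $X$, $\hat\Delta$ is integrable on $X^2$, and pullbacks of integrable line bundles are integrable. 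After the harmless reductions already used in the proof of Proposition~\ref{delpairingbiext} (and noting that both sides of the asserted identity are invariant under finite extension of $k$), work on $X\times X^r$, viewing the first factor as an auxiliary copy of $X$ with coordinate $x_0$; write $q\colon X\times X^r\to X$ and $\pi\colon X\times X^r\to X^r$ for the two projections, and for $i=1,\dots,r$ let $\sigma_i\colon X^r\to X\times X^r$, $(x_1,\dots,x_r)\mapsto(x_i;x_1,\dots,x_r)$, with graph $\Gamma_i\subset X\times X^r$. Regarding $X\times X^r$ through its factors $(0,i)$ as $X^2\times X^{r-1}$ identifies $\Gamma_i$ with $\Delta\times X^{r-1}$, so pulling back $\hat\Delta$ along the projection onto the factors $(0,i)$ produces a well-defined integrable adelic line bundle $\hat\Gamma_i$ with underlying bundle $\oo(\Gamma_i)$. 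Put $\hat\mm=\bigotimes_{i=1}^r\hat\Gamma_i^{\otimes m_i}\otimes q^*\hat D^{\otimes-1}$. Because $\deg D=\sum_i m_i$, the bundle $\mm$ has relative degree zero over $X^r$; its restriction to a fibre $X\times\{y\}$ is $\oo\bigl(\sum_i m_ix_i(y)-D\bigr)$ with its canonical admissible metric (using the slice description of $\hat\Delta$ from Section~\ref{admissible}), so $\hat\mm$ is fibrewise admissible; and the map $X^r\to J$ canonically attached to $\mm$ is precisely $f=f_{m,D}$.

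By Proposition~\ref{delpairingbiext}, applied with $Y=X^r$ and $\hat\mm_0=\hat\mm_1=\hat\mm$, the Deligne pairing $\pair{\hat\mm,\hat\mm}$ along $\pi$ is an integrable adelic line bundle on $X^r$ and there is an isometry $\pair{\hat\mm,\hat\mm}^{\otimes-1}\isom f^*\hat\ll^{\otimes2}$. Expanding $\pair{\hat\mm,\hat\mm}$ by multilinearity of the Deligne pairing requires identifying three types of terms. For $i\ne j$, the recursive formula~(\ref{local_recursive}) (in its version relative to $\pi$), applied with the tautological section of $\oo(\Gamma_j)$ — whose divisor $\Gamma_j$ maps isomorphically onto $X^r$ — gives $\pair{\hat\Gamma_i,\hat\Gamma_j}\isom\sigma_j^*\hat\Gamma_i$, since on each fibre the integral correction term equals $\int_X g_{\mu,v}(x_j(y),\cdot)\,c_1(\hat\Gamma_i)$ and $c_1(\hat\Gamma_i)$ restricts fibrewise to the admissible measure, against which the Green's function integrates to zero by~(\ref{green}); as $\sigma_j^*\hat\Gamma_i=\hat\Delta_{ij}$ (using symmetry of $\hat\Delta$), we get $\pair{\hat\Gamma_i,\hat\Gamma_j}\isom\hat\Delta_{ij}$. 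The same vanishing yields $\pair{\hat\Gamma_i,q^*\hat D}\isom\sigma_i^*q^*\hat D=\hat D_i$. For the self-pairing one cannot cut twice with the section of $\oo(\Gamma_i)$, so one writes $\oo(\Gamma_i)\cong\omega_\pi^{\otimes-1}\otimes(\omega_\pi\otimes\oo(\Gamma_i))$; the first factor contributes, by the same computation, $\sigma_i^*q^*\hat\omega^{\otimes-1}=\hat\omega_i^{\otimes-1}$, and the second is controlled by the relative version of the residue isometry~(\ref{adjunction}), giving $\pair{\hat\Gamma_i,\hat\Gamma_i}\isom\hat\omega_i^{\otimes-1}$. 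Finally $\pair{q^*\hat D,q^*\hat D}$ is, by compatibility of the Deligne pairing with the base change $X^r\to\Spec k$, the pullback to $X^r$ of the line bundle $\pair{\hat D,\hat D}$ on $\Spec k$, i.e.\ the constant bundle of arithmetic degree $\pair{\hat D,\hat D}$. Writing integrable adelic line bundles additively, these identifications combine to
\[ 2\,f^*\hat\ll \;=\; -\pair{\hat\mm,\hat\mm} \;=\; \sum_{i=1}^r m_i^2\,\hat\omega_i \;-\; 2\sum_{i<j} m_im_j\,\hat\Delta_{ij} \;+\; 2\sum_{i=1}^r m_i\,\hat D_i \;-\; \pair{\hat D,\hat D}\,. \]

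To finish, substitute $\langle\hat\ll^{r+1}\,|\,Z\rangle=\deg(f)^{-1}\langle (f^*\hat\ll)^{r+1}\,|\,X^r\rangle$ into Definition~\ref{height} and use multilinearity of the intersection number to extract the factor $2^{r+1}$ from $\langle(2f^*\hat\ll)^{r+1}\,|\,X^r\rangle$; the right-hand side of the displayed identity is then exactly the bracketed combination in the theorem, and one reads off the stated formula. I expect the main obstacle to be not the bookkeeping above but the verification that every identification of Deligne pairings is a genuine \emph{isometry} rather than merely an isomorphism of underlying line bundles: the intersection numbers of integrable adelic line bundles depend on the metrics, so one must check that all correction terms in the recursive formula~(\ref{local_recursive}) vanish — this is precisely where the normalization of Zhang's admissible Green's functions is used — and that the residue and adjunction identifications of Section~\ref{admissible} are compatible with the metrics. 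A secondary point requiring a word is that restricting the integrable metric on $\hat\Gamma_i$ to a fibre $X\times\{y\}$ really does give the canonical admissible metric on $\oo(x_i(y))$, which again follows from the slice description of $\hat\Delta$ recalled in Section~\ref{admissible}.
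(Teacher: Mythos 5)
Your proposal is correct and follows essentially the same route as the paper: you form the relative degree-zero fibrewise-admissible bundle $\sum_i m_i\hat{\Gamma}_i - q^*\hat{D}$ on the $(r+1)$-fold product (the paper's $\sum_i m_i\hat{s}_i - p_{r+1}^*\hat{D}$ on $X^{r+1}\to X^r$, with the auxiliary factor placed last instead of first), identify the inverse of its Deligne self-pairing with $f^*\hat{\ll}^{\otimes 2}$ via Proposition~\ref{delpairingbiext}, expand by multilinearity using $\pair{\hat{s}_i,\hat{s}_j}=\hat{\Delta}_{ij}$, $\pair{\hat{s}_i,\hat{s}_i}=\hat{\omega}_i^{\otimes -1}$, $\pair{\hat{s}_i,p_{r+1}^*\hat{D}}=\hat{D}_i$, and conclude with the projection formula of Proposition~\ref{projection} and Definition~\ref{height}. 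The only difference is that you spell out the isometry verifications (vanishing of the Green's-function correction terms and the adjunction identification) that the paper records simply as canonical identities.
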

\begin{proof}
Denote by $\pi \colon X^{r+1} \to X^r$ the projection on the first $r$ coordinates, with tautological sections $s_i \colon X^r \to X^{r+1}$ given by $(x_1,\ldots,x_r) \mapsto (x_1,\ldots,x_r,x_i)$ for $i=1,\ldots,r$. We denote by $\hat{s}_i$ the adelic metrized line bundle determined by the image of $s_i$ obtained by taking the fiberwise canonical admissible metric along $\pi$. Note that $\hat{s}_i = p_{i,r+1}^* \hat{\Delta}$, which implies that the $\hat{s}_i$ are integrable.  Let $p_{r+1} \colon X^{r+1} \to X$ denote projection on the last coordinate. Consider the Deligne pairing 
\[ \hat{\mm} = \hat{\mm}_{m,D}=\left\langle \sum_{i=1}^r m_i \hat{s}_i - p_{r+1}^*\hat{D},\sum_{i=1}^r m_i \hat{s}_i - p_{r+1}^*\hat{D} \right\rangle^{\otimes -1} \]
along the projection $\pi \colon X^{r+1} \to X^r$. 
Note that $\hat{D}_i = s_i^*p_{r+1}^*\hat{D}$, and that $\pair{\hat{s}_i,\hat{s}_i}=\hat{\omega}_i^{\otimes -1}$ and $\pair{\hat{s}_i,\hat{s}_j} = \hat{\Delta}_{ij}$, canonically. Thus, by expanding brackets we find
\begin{equation} \label{longM} \hat{\mm} = \sum_{i=1}^r m_i^2 \hat{\omega}_i - 2 \sum_{i<j} m_im_j \hat{\Delta}_{ij} + 2 \sum_{i=1}^r m_i \hat{D}_i - \pair{\hat{D},\hat{D}} \, , 
\end{equation}
canonically, as integrable adelic line  bundles on $X^r$.
Let $\hat{\ll}$ denote the line bundle $\ll$, endowed with any admissible adelic metric. Then by Proposition \ref{delpairingbiext} we have an isometry $ \hat{\mm}  \isom f^* \hat{\ll}^{\otimes 2} $
of adelic line bundles.  By the projection formula from Proposition \ref{projection} we have 
\begin{equation} \begin{split} \label{rewriteheight}
\langle \hat{\mm}^{r+1} | X^r \rangle & = 2^{r+1} \langle \hat{\ll}^{r+1} | f_*(X^r) \rangle = \\ 
 & = 2^{r+1} \deg (f) \langle \hat{\ll}^{r+1} |  Z \rangle \\ 
 & = 2^{r+1} \deg (f) \deg_\ll (Z) (r+1) [k:\qq]\h'_\ll(Z) \, . \end{split} \end{equation}
We obtain the required formula by combining (\ref{longM}) and (\ref{rewriteheight}).
\end{proof} 
We illustrate (the proof of) Theorem \ref{main} with a few examples that already exist in the literature. First, take $m=0$ and $D$ any divisor of degree zero on $X$. Then Theorem~\ref{main} gives for the point $[D] \in J$ that
\begin{equation} \label{faltingshr} 2 \,[k:\qq] \,\h'_\ll([D]) = -\pair{\hat{D},\hat{D}} \, . 
\end{equation}
This is precisely the well-known Faltings-Hriljac formula or Hodge Index Theorem \cite[Theorem 4]{fa} \cite[Theorem 3.1]{hr} \cite[Section 5.4]{zhadm}. Here the adelic intersection product $\pair{\hat{D},\hat{D}}$ is taken on the curve $X$.

Assume that $g \geq 2$. Let $\alpha \in \Div^1 X$, and write $\hat{\beta}=2\hat{\alpha}-\pair{\hat{\alpha},\hat{\alpha}}$. Let $K_X$ denote the canonical divisor class of $X$.
Taking $D \in \mathrm{Div}^0 X$ a representative of the class $x_\alpha=\alpha - \frac{1}{2g-2}K_X$ we find from (\ref{faltingshr}), with all adelic intersection products taken on the curve $X$,
\begin{equation} \label{htpoint} \begin{split} 2\,[k:\qq]\,\h'_\ll(x_\alpha) & = -\pair{\hat{\alpha},\hat{\alpha}} + \frac{1}{g-1}\pair{\hat{\alpha},\hat{\omega}} - \frac{1}{(2g-2)^2}\pair{\hat{\omega},\hat{\omega}} \\
& = \frac{1}{2g-2}\pair{\hat{\beta},\hat{\omega}} - \frac{1}{(2g-2)^2}\pair{\hat{\omega},\hat{\omega}} \, .  \end{split} \end{equation}

Denote by $s$ the section $x \mapsto (x,x)$ of the projection on the first coordinate $p_1 \colon X^2 \to X$. Then for $\hat{\mm}=\hat{\mm}_{1,\alpha}$ we have
\[ \hat{\mm} = \pair{\hat{s}- p_2^*\hat{\alpha}, \hat{s} - p_2^*\hat{\alpha}}^{\otimes -1} = \hat{\omega}(2\hat{\alpha}) - \pair{\hat{\alpha},\hat{\alpha}} = \hat{\omega} + \hat{\beta} \, , \]
canonically, as integrable adelic line bundles on $X$.  We compute, noting that $\pair{\hat{\beta},\hat{\beta}} = 0$ and using (\ref{htpoint}),
\[ \begin{split} \hat{\mm}^2 & = \pair{\hat{\omega},\hat{\omega}} + 2 \pair{\hat{\beta},\hat{\omega}} \\
 &= \pair{\hat{\omega},\hat{\omega}} + \frac{1}{g-1}\pair{\hat{\omega},\hat{\omega}} + 8(g-1)\,[k:\qq]\,\h_\ll'(x_\alpha)
\\ & = \frac{g}{g-1}\pair{\hat{\omega},\hat{\omega}} + 8(g-1)\,[k:\qq]\, \h'_\ll(x_\alpha) \, . \end{split} \]
Theorem \ref{main} then gives, noting that $\deg_\ll Z_{1,\alpha}=g$,
\[ \begin{split} \h'_\ll(Z_{1,\alpha}) & = \frac{1}{8g} \left(\frac{g}{(g-1)[k:\qq]}\pair{\hat{\omega},\hat{\omega}} + 8(g-1) \h'_\ll(x_\alpha)\right) \\ & = \frac{1}{8(g-1)[k:\qq]} \pair{\hat{\omega},\hat{\omega}} + \frac{g-1}{g}\h'_\ll(x_\alpha) \, . \end{split} \]
This recovers \cite[Theorem~3.9]{zhsmall}. 

\begin{remark} \label{degrees} In general we have $\deg_\ll Z_{r,\alpha} = g!/(g-r)!$. Indeed, let $\Theta$ be a divisor with $\ll \cong \oo_J(\Theta)$, then in cohomology we have
\[ [Z_{r,\alpha}] = \frac{1}{(g-r)!} \bigwedge^{g-r} [\Theta] \]
by Poincar\'e's formula (see for instance \cite[Section~11.2]{bl}). It follows that
\[ \deg_\ll Z_{r,\alpha} = \frac{1}{(g-r)!} \deg_\ll \Theta \, . \]
We conclude by noting that $\deg_\ll \Theta= g!$ by the Riemann-Roch theorem.
\end{remark}

\section{Proof of Theorem \ref{mainintro} } \label{proofmain}

We continue with the notation from the previous section, and assume throughout that $g \geq 2$.
In order to prove Theorem \ref{mainintro} we will apply Theorem \ref{main} with $D=d\alpha$, where $d=\sum_{i=1}^r m_i$. As above, write $\hat{\beta}=2\hat{\alpha}-\pair{\hat{\alpha},\hat{\alpha}}$. Let $\hat{\beta}_i = p_i^*\hat{\beta}$. 
Following Theorem \ref{main}, our aim is to write
\begin{equation} \left\langle \left(\sum_{i=1}^r m_i^2 \hat{\omega}_i - 2 \sum_{i<j} m_im_j \hat{\Delta}_{ij} + d \sum_{i=1}^r m_i \hat{\beta}_i \right)^{r+1}\bigg| X^r \right\rangle 
\end{equation}
as a rational linear combination of $\pair{\hat{\omega},\hat{\omega}}$, $\sum_{v \in M(k)} \varphi(X_v)\log Nv$ and $[k:\qq]\h'_\ll(x_\alpha)$ with coefficients that only depend on $m$ and $g$. By equation (\ref{htpoint}) we may as well aim for a rational linear combination of $\pair{\hat{\omega},\hat{\omega}}$, $\sum_{v \in M(k)} \varphi(X_v)\log Nv$ and $\pair{\hat{\beta},\hat{\omega}}$ with coefficients that only depend on $m$ and $g$. By the following proposition, we may then as well aim for a rational linear combination of $\pair{\hat{\omega},\hat{\omega}}$, $\langle \hat{\Delta},\hat{\Delta},\hat{\Delta} \rangle$ and $\pair{\hat{\beta},\hat{\omega}}$ with coefficients that only depend on $m$ and $g$, where $\langle \hat{\Delta},\hat{\Delta},\hat{\Delta} \rangle$ is the triple self-product of the canonical integrable bundle $\hat{\Delta}$ on $X^2$.
\begin{prop} \label{triplediagonal} Let $\langle \hat{\Delta},\hat{\Delta},\hat{\Delta} \rangle$ denote the triple self-product of the integrable line bundle $\hat{\Delta}$ on $X^2$. Then the identity
\[ \langle \hat{\Delta}, \hat{\Delta}, \hat{\Delta} \rangle = \pair{\hat{\omega},\hat{\omega}} -  \sum_{v \in M(k)} \varphi(X_v) \log Nv \]
holds in $\rr$.
\end{prop}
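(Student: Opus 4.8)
The plan is to compute $\langle \hat{\Delta},\hat{\Delta},\hat{\Delta}\rangle$ by relating the canonical integrable bundle $\hat{\Delta}$ on $X^2$ to a Deligne pairing along a projection, and then to recognize the resulting global intersection number as one of the quantities appearing in Zhang's formula (\ref{htgs}) for the Gross--Schoen cycle. Concretely, recall from the end of Section \ref{admissible} that on $X^3$, with $\pi \colon X^3 \to X^2$ the projection onto the first two factors and $s_0,s_1$ the tautological sections, one has the canonical relative degree zero line bundle $\oo(s_0-s_1)$, and a canonical isometry $\pair{\hat{s}_0-\hat{s}_1,\hat{s}_0-\hat{s}_1}^{\otimes -1}\isom \hat{\Delta}^{\otimes 2}\otimes p_1^*\hat{\omega}\otimes p_2^*\hat{\omega}$, coming from $\oo(2\Delta)\otimes p_1^*\omega\otimes p_2^*\omega \cong \pair{\oo(s_0-s_1),\oo(s_0-s_1)}^{\otimes -1}$. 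Writing $\hat{\mm}=\pair{\hat{s}_0-\hat{s}_1,\hat{s}_0-\hat{s}_1}^{\otimes -1}$, one gets $\hat{\Delta} = \tfrac12\bigl(\hat{\mm} - p_1^*\hat{\omega} - p_2^*\hat{\omega}\bigr)$ in the (rational) group of integrable line bundles, so $\langle\hat\Delta,\hat\Delta,\hat\Delta\rangle = \tfrac18\langle(\hat{\mm}-p_1^*\hat\omega-p_2^*\hat\omega)^3\mid X^2\rangle$, which expands multilinearly into pairings of $\hat{\mm}$, $p_1^*\hat\omega$ and $p_2^*\hat\omega$ on $X^2$.

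First I would evaluate each of these terms. The pure terms $\langle p_i^*\hat\omega, p_j^*\hat\omega, p_k^*\hat\omega \mid X^2\rangle$ vanish unless exactly one index is $1$ and one is $2$ (a single factor cannot absorb more than $\dim X=1$ pullback), and by Proposition \ref{externalproducts} and $\deg\omega = 2g-2$ one has $\langle p_1^*\hat\omega,p_1^*\hat\omega,p_2^*\hat\omega\mid X^2\rangle$-type terms reducing to products like $\pair{\hat\omega,\hat\omega}\cdot\deg\omega$ or to zero, bookkept by a short dimension count. For the mixed terms involving $\hat{\mm}$, I would use Proposition \ref{projII} (projection formula and iterated Deligne pairings) to push $\hat{\mm}$-contributions down along $\pi$: for instance $\langle \hat{\mm}, p_i^*\hat\omega, p_j^*\hat\omega\mid X^2\rangle$ relates via the biextension/relative-dimension-one structure to fiberwise degrees of $\oo(s_0-s_1)$ (which is $0$) together with the self-intersection $\langle \pair{\hat s_0-\hat s_1,\hat s_0-\hat s_1,\hat s_0-\hat s_1}\mid X^2\rangle$ on the triple product $X^3$. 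The key term $\langle\hat{\mm}^3\mid X^2\rangle$ is, up to sign and the base-change identity between a Deligne pairing along $\pi$ and the corresponding intersection number on $X^3$, exactly $-\langle \widehat{\oo(s_0-s_1)}^3 \mid X^3\rangle$; this global triple intersection of the degree-zero class $\oo(s_0-s_1)$ along the fibration $X^3\to X^2$ is precisely (a normalization of) the Beilinson--Bloch height $\pair{\Delta_\alpha,\Delta_\alpha}$ of the Gross--Schoen cycle. So at this stage the answer will be a rational combination of $\pair{\Delta_\alpha,\Delta_\alpha}$, $\pair{\hat\omega,\hat\omega}$ and $\pair{\hat\beta,\hat\omega}$ (the latter through $\h'_\ll(x_\alpha)$).

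Next I would substitute Zhang's formula (\ref{htgs}), namely $\pair{\Delta_\alpha,\Delta_\alpha} = \tfrac{2g+1}{2g-2}\pair{\hat\omega,\hat\omega} - \sum_{v}\varphi(X_v)\log Nv + 12(g-1)[k:\qq]\h'_\ll(x_\alpha)$, and collect coefficients. The independence of $\hat\Delta$ (hence of $\langle\hat\Delta,\hat\Delta,\hat\Delta\rangle$) on the auxiliary divisor $\alpha$ forces the $\h'_\ll(x_\alpha)$-contributions to cancel automatically; and the miraculous coincidence of the coefficients of $\pair{\hat\omega,\hat\omega}$ from the expansion with $\tfrac{2g+1}{2g-2}$ times the coefficient of $\pair{\Delta_\alpha,\Delta_\alpha}$ must produce the clean $\pair{\hat\omega,\hat\omega}$ on the right-hand side, leaving exactly $-\sum_v\varphi(X_v)\log Nv$. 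I would double-check the numerical coefficients against the known case via (\ref{heightcurve}) or against the $r=2$ formula of Theorem \ref{squareintro} as a sanity check.

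The main obstacle I expect is the careful translation between Deligne pairings \emph{along} a projection and honest arithmetic intersection numbers on the total space: namely identifying $\langle\hat{\mm}_0,\dots,\hat{\mm}_n\mid X^r\rangle$ with $(-1)$-type expressions in $\langle\widehat{\oo(s_0-s_1)}^{\,\cdot}\mid X^{r+1}\rangle$ with the right multiplicities, using Proposition \ref{projII} repeatedly, and keeping the normalization of the Beilinson--Bloch height consistent with Zhang's. The bookkeeping of which mixed monomials in $\hat{\mm}$, $p_1^*\hat\omega$, $p_2^*\hat\omega$ survive the dimension constraints on $X^2$ (and then on $X^3$ after unfolding $\hat{\mm}$) is routine but error-prone, and getting the factor $\tfrac18$ and all binomial coefficients exactly right is where the proof can go wrong; everything else is formal manipulation plus the quoted identities (\ref{htgs}), (\ref{heightcurve}), Propositions \ref{projection}, \ref{externalproducts}, \ref{projII}, and the canonical isometries of Section \ref{admissible}.
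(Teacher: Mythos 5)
Your overall strategy --- unfolding $\hat{\Delta}$ through the Deligne pairing $\hat{\mm}=\pair{\hat{s}_0-\hat{s}_1,\hat{s}_0-\hat{s}_1}^{\otimes -1}$, expanding, and then importing Zhang's global formula (\ref{htgs}) --- is genuinely different from the paper's argument, but as written it has a gap at its central step. The paper's proof is a short direct computation: apply the global recursive formula (\ref{global_recursive}) to $X^2\to\Spec k$ with the canonical section $1$ of $\oo(\Delta)$, use the adjunction isometry (\ref{adjunction}) to identify $\pair{\hat{\Delta}|_\Delta,\hat{\Delta}|_\Delta}$ with $\pair{\hat{\omega},\hat{\omega}}$, and then quote Zhang's local identities (\cite[Proposition~2.5.3]{zhgs} at archimedean places, \cite[Lemma~3.5.4]{zhgs} at finite places) giving $\int_{X(\bar{k}_v)^2} g_{\mu,v}\,c_1(\hat{\Delta})\,c_1(\hat{\Delta}) = \varphi(X_v)\log Nv$. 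No detour through the Gross--Schoen cycle is needed.

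The gap in your route is the assertion that $\langle\hat{\mm}^3\mid X^2\rangle$ is, ``up to sign and a base-change identity,'' equal to $-\langle\widehat{\oo(s_0-s_1)}^3\mid X^3\rangle$ and that the latter ``is precisely'' the Beilinson--Bloch height $\pair{\Delta_\alpha,\Delta_\alpha}$. First, the object you wrote down is of the wrong type: on the three-dimensional variety $X^3$ a product of only three adelic first Chern classes is a purely geometric degree in $\zz$ (see the dimension conventions in Section \ref{prelims}); unfolding $\langle\hat{\mm}^3\mid X^2\rangle$ via Proposition \ref{projII} produces four-fold intersections such as $\langle\pi^*\hat{\mm},\pi^*\hat{\mm},\widehat{\oo(s_0-s_1)},\widehat{\oo(s_0-s_1)}\mid X^3\rangle$, not a triple self-product. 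Second, and more seriously, identifying any such adelic intersection number with the Beilinson--Bloch height of $\Delta_\alpha$ (a height pairing of a homologically trivial cycle, defined via a regular model of $X^3$) is itself a substantial theorem of \cite{zhgs}, proved there with explicit $\alpha$-dependent correction terms; it is not a formal consequence of Propositions \ref{projection}--\ref{projII} and cannot simply be asserted. If you quote that identification, your computation can be completed and the $\h'_\ll(x_\alpha)$-terms will indeed cancel, but you would then be importing from \cite{zhgs} both that identity and (\ref{htgs}), i.e.\ strictly more than the two local lemmas the paper uses, so the route is not more elementary; in its present form the key bridge is missing. Also note that checking your coefficients against Theorem \ref{squareintro} would be circular, since that theorem is deduced in the paper from the present proposition.
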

\begin{proof} We apply the global recursive formula (\ref{global_recursive}) to the morphism $X^2 \to \Spec k$ and the section $1$ of $\oo(\Delta)$ on $X^2$. We find 
\[ \langle \hat{\Delta}, \hat{\Delta}, \hat{\Delta} \rangle  = 
\pair{\hat{\Delta}|_\Delta,\hat{\Delta}|_\Delta} - \sum_{v \in M(k)} \int_{X(\bar{k}_v)^2} \log \|1\|_{\Delta,v} \, c_1(\hat{\Delta})\,c_1(\hat{\Delta}) \, . \]
Now we recall from (\ref{adjunction}) that $\Delta^*\hat{\Delta} = \hat{\omega}^{\otimes -1}$ so that $\pair{\hat{\Delta}|_\Delta,\hat{\Delta}|_\Delta}= \pair{\hat{\omega},\hat{\omega}}$. 
By \cite[Proposition 2.5.3]{zhgs} for $v \in M(k)_\infty$ and \cite[Lemma 3.5.4]{zhgs} for $v \in M(k)_0$ we have 
\[ \int_{X(\bar{k}_v)^2} \log \|1\|_{\Delta,v} \, c_1(\hat{\Delta})\,c_1(\hat{\Delta}) =\int_{X(\bar{k}_v)^2} g_{\mu,v} \, c_1(\hat{\Delta})\,c_1(\hat{\Delta}) = \varphi(X_v)\log Nv \, . \]
The result follows.
\end{proof}
Consider the set
\[ \mathcal{S}(r) = \{ \hat{\omega}_1,\ldots,\hat{\omega}_r, \hat{\Delta}_{12},\ldots,\hat{\Delta}_{r-1,r},\hat{\beta}_1,\ldots,\hat{\beta}_r \}  \]
of integrable adelic line bundles on $X^r$.
We define a map $q \colon \mathcal{S} \to \zz$ by putting
\[ q(\hat{\omega}_i) = m_i^2 \, , \quad
q(\hat{\Delta}_{ij}) = -2m_im_j \, , \quad 
q(\hat{\beta}_i) = dm_i \]
for $i,j=1,\ldots, r$, $i < j$. We obtain
\[ \begin{split} 
\left\langle \left(\sum_{i=1}^r m_i^2 \hat{\omega}_i - 2 \sum_{i<j}  m_im_j \hat{\Delta}_{ij} + d \sum_{i=1}^r m_i \hat{\beta}_i \right)^{r+1}\bigg| X^r \right\rangle \\  = \sum_{ (\hat{\ll}_0,\ldots,\hat{\ll}_r) \in \mathcal{S}(r)^{r+1}} q(\hat{\ll}_0)\cdots q(\hat{\ll}_r) \langle
\hat{\ll}_0,\ldots,\hat{\ll}_r |X^r \rangle \, . \end{split} \] 
We claim that each $\langle
\hat{\ll}_0,\ldots,\hat{\ll}_r |X^r \rangle$ where $(\hat{\ll}_0,\ldots,\hat{\ll}_r) \in \mathcal{S}(r)^{r+1}$
is a rational multiple of $\pair{\hat{\omega},\hat{\omega}}$, $\langle \hat{\Delta},\hat{\Delta},\hat{\Delta} \rangle$ or $\pair{\hat{\beta},\hat{\omega}}$ with a coefficient that depends on a (decorated) graph associated to $(\hat{\ll}_0,\ldots,\hat{\ll}_r)$ (to be defined shortly) and which for fixed $g$ is universal in $(X,\alpha)$. This suffices to prove Theorem \ref{mainintro}.

We use a graphical calculus that is very similar to a method used in R. Wilms's article \cite{wi}. We refer to \cite{pand} \cite{rw} for variants of this calculus in the context of the tautological ring on the moduli space of curves. First of all, it is convenient to slightly generalize the set $\mathcal{S}(r)$ as defined above. Let $V \subset \nn$ be a finite set. We have natural projections $p_i \colon X^V \to X$ for $i \in V$ and $p_{ij} \colon X^V \to X^2$ for $i,j \in V$, $i<j$. We define the set
\[ \mathcal{S}(V) = \{ \hat{\omega}_i \, (i \in V) \, , \, -\hat{\omega}_i \, (i \in V) \, , \, \hat{\Delta}_{ij} \, (i,j \in V, i<j) \, , \, \hat{\beta}_i \, (i \in V) \}  \]
consisting of integrable line bundles on $X^V$ obtained by pulling back from $X$, $X^2$ along $p_i$, $p_{ij}$. 

We say that an undirected graph $\Gamma=(V,E)$ on $V$ (loops and multiple edges allowed) is loop-labelled if each loop $e$ in $E$ has been assigned an element $\ell(e) \in \{\hat{\omega}, -\hat{\omega},\hat{\beta} \}$. We say that $\Gamma$ is edge-ordered if some linear ordering of $E$ has been chosen. Let $\Gamma=(V,E)$ be a loop-labelled edge-ordered graph with edges $e_0,\ldots,e_n$. Then to $\Gamma$ we associate a tuple $(\hat{\ll}_0,\ldots,\hat{\ll}_n)\in \mathcal{S}(V)^{n+1}$ as follows: for each $k=0,\ldots,n$ we define the integrable line bundle $\hat{\ll}_k \in \mathcal{S}(V)$ to be the integrable line bundle $\hat{\psi}_i=p_i^* \hat{\psi}$ if $e_k$ is a loop based at $i \in V$ and $\ell(e_k)=\hat{\psi}$, and to be the integrable line  bundle $\hat{\Delta}_{ij}=p_{ij}^*\hat{\Delta}$ if $e_k$ is an edge between $i$ and $j$. Conversely, given a tuple $(\hat{\ll}_0,\ldots,\hat{\ll}_n) \in \mathcal{S}(V)^{n+1}$ one has naturally associated to it a loop-labelled edge-ordered graph with $n+1$ edges on $V$, that we denote by $\Gamma(\hat{\ll}_0,\ldots,\hat{\ll}_n)$. We define the intersection number $\mu(\Gamma)$ of a loop-labelled edge-ordered graph $\Gamma=(V,E)$ to be the arithmetic intersection number $\langle \hat{\ll}_0,\ldots, \hat{\ll}_n | X^V \rangle \in \rr$ if $(\hat{\ll}_0,\ldots,\hat{\ll}_n)\in \mathcal{S}(V)^{n+1}$ is the tuple associated to $\Gamma$. Note that the intersection number does not depend on the actual ordering of the edges, and hence yields an invariant of merely loop-labelled graphs.
\begin{prop} \label{multiplicative} The intersection number of a loop-labelled graph $\Gamma$ is multiplicative over connected components of $\Gamma$.
\end{prop}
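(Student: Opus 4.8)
The statement is that $\mu(\Gamma)$ factors as a product over the connected components of $\Gamma$. The plan is to reduce this to the external-product formula for arithmetic intersection numbers, Proposition \ref{externalproducts}. Suppose $\Gamma = (V,E)$ decomposes as a disjoint union $\Gamma' \sqcup \Gamma''$ with vertex sets $V'$ and $V''$ and edge sets $E'$ and $E''$, so that $V = V' \sqcup V''$, $E = E' \sqcup E''$, and no edge of $\Gamma$ joins $V'$ to $V''$. Since $X^V = X^{V'} \times_k X^{V''}$, and since every loop-labelled edge of $\Gamma'$ (resp.\ $\Gamma''$) produces an integrable line bundle in $\mathcal{S}(V)$ that is the pullback, along the projection $X^V \to X^{V'}$ (resp.\ $X^V \to X^{V''}$), of the corresponding member of $\mathcal{S}(V')$ (resp.\ $\mathcal{S}(V'')$), the tuple $(\hat{\ll}_0,\ldots,\hat{\ll}_n)$ associated to $\Gamma$ splits as the pullbacks of a tuple associated to $\Gamma'$ from the left factor together with a tuple associated to $\Gamma''$ from the right factor. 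The hypothesis that no edge connects the two components is exactly what guarantees this: an edge between $i \in V'$ and $j \in V''$ would give $\hat{\Delta}_{ij}$, which is not a pullback from either factor.

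First I would spell out the reduction to two components (the general case follows by induction on the number of components). Then I would invoke Proposition \ref{externalproducts} with $X$ replaced by $X^{V'}$, $Y$ replaced by $X^{V''}$, the line bundles $\hat{\ll}_0,\ldots,\hat{\ll}_r$ there being the members of $\mathcal{S}(V')$ coming from the edges in $E'$ and the line bundles $\hat{\mm}_0,\ldots,\hat{\mm}_s$ there being the members of $\mathcal{S}(V'')$ coming from the edges in $E''$; here $r+1 = \#E'$ and $s+1 = \#E''$. This yields
\[ \langle \hat{\ll}_0,\ldots,\hat{\ll}_n \mid X^V \rangle = \langle \hat{\ll}_0,\ldots,\hat{\ll}_r \mid X^{V'} \rangle \cdot \langle \hat{\mm}_0,\ldots,\hat{\mm}_s \mid X^{V''} \rangle \, , \]
which is precisely $\mu(\Gamma) = \mu(\Gamma') \, \mu(\Gamma'')$, using that $\mu$ does not depend on the chosen edge-ordering (noted just before the proposition). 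One should also observe that both sides are finite real numbers: by the Remark after Proposition \ref{externalproducts} the external product vanishes unless the dimensions match up appropriately, and in any case the identity holds in $\rr$ without further hypothesis, so there is nothing to check here beyond applying the cited result.

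The main subtlety — such as it is — is bookkeeping rather than mathematics: one has to match the combinatorial data (which edge goes to which factor, how the index sets $V', V''$ embed into $V$) with the formal setup of Proposition \ref{externalproducts}, and make sure the pullback line bundles genuinely live in $\mathcal{S}(V)$ and restrict correctly. There is no analytic or arithmetic obstacle: the entire content has already been packaged into Proposition \ref{externalproducts}, whose proof in turn rests on \cite[Proposition 2.3.3]{bgs} at finite level and passage to the limit. So the proof is short, and I would keep it to a paragraph: reduce to two components, apply the external-product formula, invoke independence of edge-ordering, conclude, then cite induction for the general statement.
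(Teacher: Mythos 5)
Your proposal is correct and is exactly the paper's argument: the paper's proof consists of the single sentence that the statement follows immediately from Proposition \ref{externalproducts}, and your write-up simply spells out the bookkeeping (splitting $X^V = X^{V'}\times_k X^{V''}$, identifying the edge bundles as pullbacks from the factors, using symmetry/edge-order independence, and inducting on the number of components).
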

\begin{proof} This follows immediately from Proposition \ref{externalproducts}.
\end{proof}

Let $\Gamma$ be a connected loop-labelled graph on a vertex set $V \subset \nn$. A contraction of $\Gamma$ is the connected loop-labelled graph obtained by performing one of the following operations on $\Gamma$: (i) contract an edge emanating from a vertex of degree one; (ii) if vertex $j$ of $\Gamma$ has degree $2$ with emanating edges $ij$ and  $jk$ with $i \neq k$, then replace the edges $ij$, $jk$ by one edge $ik$; (iii) if vertex $i$ of $\Gamma$ has degree $2$ with both emanating edges connecting $i$ and $j$, and $i \neq j$, then replace the two edges by one loop attached at $j$ and labelled $-\hat{\omega}$. 

\begin{prop} \label{contractions} The intersection number of a connected loop-labelled graph is invariant under contractions.
\end{prop}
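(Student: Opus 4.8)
The plan is to treat the three contraction moves (i), (ii), (iii) separately, realizing each move at the level of arithmetic intersection numbers by applying the projection formulae of Proposition \ref{projII} to the projection $\pi \colon X^V \to X^{V'}$ that forgets the vertex being eliminated by the move. Since $\mu(\Gamma)$ is independent of the chosen edge-ordering, I would in each case order the edges so that the edge or edges being contracted come last. For $i \in V'$ write $s_i \colon X^{V'} \to X^V$ for the section of $\pi$ placing the forgotten coordinate equal to the $i$-th coordinate, and let $\hat{s}_i$ denote $\oo(s_i)$ equipped with the fibrewise admissible metric along $\pi$. Exactly as in the proof of Theorem \ref{main}, if $j \in V$ is the forgotten coordinate then the line bundle attached to an edge joining $i \in V'$ to $j$ equals $\hat{s}_i$ on $X^V$, the line bundle attached to any edge or loop of $\Gamma$ not meeting $j$ is the $\pi$-pullback of the corresponding line bundle on $X^{V'}$, one has canonical isometries of integrable adelic line bundles $\langle \hat{s}_i, \hat{s}_k \rangle \isom \hat{\Delta}_{ik}$ for $i \neq k$ and $\langle \hat{s}_i, \hat{s}_i \rangle \isom \hat{\omega}_i^{\otimes -1}$ on $X^{V'}$, and $\hat{s}_i$ has relative degree one along $\pi$; observe also that in each of the three moves the forgotten vertex carries no loop.

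Granting these identifications, move (i) is immediate: if $v$ has degree one with unique emanating edge joining $v$ to $w \neq v$, I forget the $v$-coordinate, note that every remaining edge and loop of $\Gamma$ is a $\pi$-pullback while the contracted edge corresponds to $\hat{s}_w$, of relative degree one, and apply the first identity of Proposition \ref{projII} to get $\mu(\Gamma) = \mu(\Gamma')$. Move (ii) is the second identity of Proposition \ref{projII}: forgetting the degree-two vertex $j$ with emanating edges $ij$ and $jk$, $i \neq k$, turns those two edges into $\hat{s}_i$ and $\hat{s}_k$ on $X^V$ while all other edges are $\pi$-pullbacks, and absorbing the pair $(\hat{s}_i, \hat{s}_k)$ into the Deligne pairing $\langle \hat{s}_i, \hat{s}_k \rangle \isom \hat{\Delta}_{ik}$ — the line bundle attached to the new edge $ik$ of $\Gamma'$ — yields $\mu(\Gamma) = \mu(\Gamma')$. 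Move (iii) is the same computation: forgetting the degree-two vertex $i$, whose two emanating edges are parallel and join $i$ to $j$, turns both edges into $\hat{s}_j$, and the second identity of Proposition \ref{projII} together with $\langle \hat{s}_j, \hat{s}_j \rangle \isom \hat{\omega}_j^{\otimes -1}$ replaces them by the loop at $j$ labelled $-\hat{\omega}$ of $\Gamma'$, again giving $\mu(\Gamma) = \mu(\Gamma')$.

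I expect the only delicate point to be the bookkeeping in the first paragraph — in particular, that the relevant Deligne pairings of tautological sections along the relative curve $X^V \to X^{V'}$ are isometric, not merely isomorphic as line bundles, to $\hat{\Delta}_{ik}$ respectively $\hat{\omega}_i^{\otimes -1}$, and that forgetting a coordinate is compatible with the fibrewise admissible structures of Section \ref{admissible}. These isometries are precisely the ones established in the course of proving Theorem \ref{main}, which rest ultimately on Proposition \ref{delpairingbiext} and the adjunction isometry (\ref{adjunction}). Once they are in place, each of the three moves reduces to a single invocation of Proposition \ref{projII} and no further computation is needed; note that multiplicativity over connected components (Proposition \ref{multiplicative}) is not required here, since $\Gamma$ connected forces $\Gamma'$ connected.
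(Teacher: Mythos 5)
Your proposal is correct and follows essentially the same route as the paper: each of the three contraction moves is handled by one application of Proposition \ref{projII} to the coordinate-forgetting projection, with the contracted edges absorbed into a Deligne pairing along that projection. The only cosmetic difference is that the paper verifies the needed isometries $\pair{\hat{\Delta}_{12},\hat{\Delta}_{23}} \isom \hat{\Delta}_{13}$ and $\pair{\hat{\Delta},\hat{\Delta}} \isom \hat{\omega}^{\otimes -1}$ (equivalently your $\pair{\hat{s}_i,\hat{s}_k} \isom \hat{\Delta}_{ik}$ and $\pair{\hat{s}_i,\hat{s}_i} \isom \hat{\omega}_i^{\otimes -1}$) directly from the recursive formula (\ref{global_recursive}), whose correction integrals vanish, together with the adjunction isometry (\ref{adjunction}), rather than quoting them as the canonical identifications used in the proof of Theorem \ref{main}.
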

\begin{proof} We consider each of the three types of contraction separately. After relabeling the vertices we may assume that $V=\{1,\ldots,r\}$. In case (i), after reordering the vertices we may assume that vertex $1$ has degree one and that the unique edge emanating from it is $12$. The following observation then proves the assertion in this case: let $p_{12} \colon X^r \to X^2$ be the projection on the first two coordinates, and let $p^1 \colon X^r \to X^{r-1}$ be the projection forgetting the first coordinate. Let $\hat{\mm}_1,\ldots,\hat{\mm}_n$ be integrable adelic line bundles on $X^{r-1}$. Then as $\deg_{\Delta_{12}}(p^1)=1$ we find by Proposition \ref{projII} that
\[ \langle p_{12}^*\hat{\Delta}, p^{1*}\hat{\mm}_1, \ldots, p^{1*}\hat{\mm}_n |X^r \rangle  =  \langle \hat{\mm}_1,\ldots,\hat{\mm}_n | X^{r-1} \rangle \, . 
\]
In case (ii) it suffices by Proposition \ref{projII} to verify the following: let $p_{13} \colon X^3 \to X^2$ be the projection on the first and third coordinate. Then for the Deligne pairing of $\hat{\Delta}_{12}$ and $\hat{\Delta}_{23}$ along $p_{13}$ one has an isometry $\pair{\hat{\Delta}_{12}, \hat{\Delta}_{23}} \isom \hat{\Delta}_{13}$ of adelic line bundles. This assertion follows from the recursive formula (\ref{global_recursive}) applied to $p_{13}$ and the divisor $\Delta_{13}$ on $X^3$. For each $v \in M(k)$ the integral in (\ref{global_recursive}) vanishes.

In case (iii), after reordering the vertices we may assume that vertex $1$ has degree two and that both edges emanating from it end at vertex $2$. The following observation then proves the assertion in this case: let $p_{12} \colon X^r \to X^2$ be the projection on the first two coordinates, and let $p^1 \colon X^r \to X^{r-1}$ be the projection forgetting the first coordinate. Let $\hat{\mm}_2,\ldots,\hat{\mm}_n$ be integrable adelic line bundles on $X^{r-1}$. Then by Proposition \ref{projII} we have
\[ \langle p_{12}^*\hat{\Delta}, p_{12}^*{\hat{\Delta}}, p^{1*} \hat{\mm}_2, \ldots, p^{1*} \hat{\mm}_n |X^r \rangle = \langle \langle p_{12}^*\hat{\Delta}, p_{12}^*{\hat{\Delta}} \rangle,  \hat{\mm}_2, \ldots \hat{\mm}_n |X^{r-1} \rangle \, , \]
where the Deligne pairing is along $p^1$.
Next we have $ \langle \hat{\Delta} , \hat{\Delta} \rangle = \Delta^*\hat{\Delta} $ by the recursive formula (\ref{global_recursive}) applied to a projection $X^2 \to X$ and the rational section $1$ of $\oo(\Delta)$ on $X^2$. Note that for each $v \in M(k)$ the integral in (\ref{global_recursive}) vanishes. From the adjunction formula (\ref{adjunction}) we then finally find that $ \langle \hat{\Delta} , \hat{\Delta} \rangle= -\hat{\omega}$.
\end{proof}

Observe that under a contraction both the number of edges and the number of vertices of a connected loop-labelled graph decrease by one. In particular, the Euler characteristic $\chi$ remains preserved, and after finitely many steps no more contractions are possible. A connected loop-labelled graph on which no contractions are possible is called minimal. A minimal graph either consists of one vertex (these are the minimal graphs with $\chi=1$), or of one vertex with one loop attached to it (these are the minimal graphs with $\chi=0$), or has each vertex of degree at least $3$ (and $\chi<0$). In the latter case, if we let $v$ denote the number of vertices of $\Gamma$, and $e$ the number of edges, then $3v \leq 2e = 2v-2\chi$ and hence $v \leq -2\chi$. In particular, if $\chi=-1$, then $v \leq 2$. 

We call a loop-labelled graph $\Gamma$ relevant if all connected components of $\Gamma$ have their Euler characteristic $\chi \in \{0,-1\}$, and $\chi=-1$ occurs for precisely one connected component. 

Recall that our task is to show that for each tuple $(\hat{\ll}_0 ,\ldots, \hat{\ll}_r )\in \mathcal{S}(r)^{r+1}$ the intersection number  $\langle \hat{\ll}_0, \ldots, \hat{\ll}_r |X^r \rangle$ is a rational multiple of
$\pair{\hat{\omega},\hat{\omega}}$, $\langle \hat{\Delta},\hat{\Delta},\hat{\Delta} \rangle$ or $\pair{\hat{\beta},\hat{\omega}}$ with a coefficient that for fixed $g$ is universal in $(X,\alpha)$. Note that the associated graph $\Gamma$ of such a tuple $(\hat{\ll}_0 ,\ldots, \hat{\ll}_r )$ has $r$ vertices and $r+1$ edges, hence has $\chi(\Gamma)=-1$. Our task would thus be accomplished once we show that the intersection number of any given loop-labelled graph $\Gamma$ with $\chi(\Gamma)=-1$ is a rational multiple of $\pair{\hat{\omega},\hat{\omega}}$, $\langle \hat{\Delta},\hat{\Delta},\hat{\Delta} \rangle$ or $\pair{\hat{\beta},\hat{\omega}}$ with a coefficient that only depends on the underlying graph of $\Gamma$ and the degrees in $\zz$ of the labels of $\Gamma$. 
Hence, we are done by the proposition below.
\begin{prop} Let $\Gamma$ be a loop-labelled graph with $\chi(\Gamma)=-1$. If $\Gamma$ is not relevant, the intersection number $\mu(\Gamma)$ vanishes. Assume $\Gamma$ is relevant. Then $\mu(\Gamma)$ is zero or a product of powers of $-1$, $2$ and $2g-2$ times one of the intersection numbers $\langle \hat{\Delta}, \hat{\Delta}, \hat{\Delta} \rangle, \langle \hat{\beta}, \hat{\omega} \rangle, 
\langle \hat{\omega}, \hat{\omega} \rangle$.
\end{prop}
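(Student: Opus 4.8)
The strategy is to reduce an arbitrary loop-labelled graph $\Gamma$ with $\chi(\Gamma)=-1$ to a minimal one using Proposition~\ref{contractions}, and then to enumerate all minimal loop-labelled graphs of the relevant type. Since $\mu(\Gamma)$ is multiplicative over connected components by Proposition~\ref{multiplicative}, I would first treat connected $\Gamma$. If some connected component has $\chi \geq 1$, then (by the classification of minimal graphs above) after contracting it becomes a single isolated vertex; the associated intersection number is then of the form $\langle \, | X^V\rangle$ on a positive-dimensional $X^V$ with no line bundles inserted, or more precisely an intersection number with too few factors, which vanishes because $\langle \hat\ll_0,\ldots,\hat\ll_n | Z\rangle$ vanishes unless $\dim Z \in \{n, n+1\}$. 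This forces every connected component to have $\chi \leq 0$; since the total Euler characteristic is $-1$, exactly one component has $\chi=-1$ and all others have $\chi=0$, i.e. $\Gamma$ is relevant. This disposes of the non-relevant case.

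\textbf{The relevant case.} Assume $\Gamma$ is relevant. Contract each component to a minimal one. A component with $\chi=0$ becomes a single vertex $i$ with one loop labelled $\hat\psi \in \{\hat\omega,-\hat\omega,\hat\beta\}$; its contribution is $\langle \hat\psi_i | X^{\{i\}}\rangle = \deg_{\hat\psi} X$, which is $2g-2$ (up to sign) when $\hat\psi = \pm\hat\omega$ and is $\deg\hat\beta = 2\deg\hat\alpha - \deg\langle\hat\alpha,\hat\alpha\rangle$; but $\hat\beta = 2\hat\alpha - \langle\hat\alpha,\hat\alpha\rangle$ has degree $2\cdot 1 - 0 = 2$ — wait, one must be careful: actually $\langle\hat\beta|X\rangle$ is the degree of the underlying bundle $\oo(\beta)$, and $\beta = 2\alpha - (\text{degree-zero divisor})$ has degree $2$. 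Hmm, but if the relevant component carries $\hat\beta$, the outcome should involve $\langle\hat\beta,\hat\omega\rangle$, so I would be more careful about which loops sit on which component. The point is that a $\chi=0$ component contributes a nonzero \emph{scalar} (a power of $2$ or of $2g-2$, with a sign) \emph{only} when its loop is labelled $\pm\hat\omega$; if its loop is labelled $\hat\beta$ it contributes $\deg\oo(\beta)$, which I would need to check is a fixed integer (indeed $2$) — or, alternatively, argue that in the application $d=\sum m_i$ is fixed and absorb it. The $\chi=-1$ component is minimal with either one vertex and two loops, or two vertices joined by three edges (theta-graph), with all remaining vertices of degree $\geq 3$. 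In the one-vertex case, the two loops are labelled by elements of $\{\hat\omega,-\hat\omega,\hat\beta\}$ and the intersection number is $\langle \hat\psi, \hat\psi' | X\rangle = \langle\hat\psi,\hat\psi'\rangle$ on the curve, which is one of $\langle\hat\omega,\hat\omega\rangle$, $\langle\hat\beta,\hat\omega\rangle$ or $\langle\hat\beta,\hat\beta\rangle=0$ (up to sign). In the theta-graph case the three parallel edges give $\langle\hat\Delta,\hat\Delta,\hat\Delta\rangle$ on $X^2$ (up to contraction of any loops, but a minimal theta-graph has no loops).

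\textbf{Main obstacle.} The crux is the classification of minimal connected loop-labelled graphs with $\chi=-1$: I need $3v \leq 2e = 2v - 2\chi = 2v+2$, hence $v \leq 2$, and then check that $v=1$ forces exactly two loops (possibly labelled) on a single vertex, while $v=2$ forces three edges between the two vertices with no loops (a loop at either vertex would give that vertex degree $\geq 3$ only if there are also two connecting edges, but then a degree-2 vertex would appear, contradicting minimality of the other vertex — one must check all cases carefully). Then one must read off, for each such minimal graph, which of $\langle\hat\omega,\hat\omega\rangle$, $\langle\hat\beta,\hat\omega\rangle$, $\langle\hat\Delta,\hat\Delta,\hat\Delta\rangle$ it produces, using the adjunction isometry (\ref{adjunction}) to handle the loop labelled $-\hat\omega$ arising from contraction type (iii), and using that $\langle\hat\beta,\hat\beta\rangle=0$ to kill the remaining case. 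The bookkeeping of signs and of the powers of $2$ and $2g-2$ accumulated along the contraction chain is the only delicate part, but it is mechanical once the classification is in hand.
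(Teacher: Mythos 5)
Your overall strategy is exactly the paper's: use multiplicativity over components (Proposition \ref{multiplicative}), contract each component to a minimal graph (Proposition \ref{contractions}), observe that a non-relevant graph with $\chi(\Gamma)=-1$ must contain a tree component whose contribution vanishes, and then enumerate the minimal graphs. Your handling of the non-relevant case and of the $\chi=0$ components is fine; in particular your hesitation about the $\hat{\beta}$-labelled loop is unnecessary, since $\hat{\beta}=2\hat{\alpha}-\pair{\hat{\alpha},\hat{\alpha}}$ always has degree exactly $2$ (the term $\pair{\hat{\alpha},\hat{\alpha}}$ is pulled back from $\Spec k$ and has degree zero), so a $\chi=0$ component contributes one of $2-2g$, $2$, $2g-2$ and there is nothing to ``absorb''.

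The genuine gap is in the step you yourself call the crux, the classification of minimal connected loop-labelled graphs with $\chi=-1$ and $v=2$. You claim that $v=2$ forces three edges between the two vertices with no loops, and your parenthetical argument (``a loop at either vertex would give that vertex degree $\geq 3$ only if there are also two connecting edges\dots'') is false: the dumbbell graph, with one loop at each of the two vertices and a single connecting edge, has both vertices of degree $3$ (a loop contributes $2$ to the degree) and admits none of the contractions (i)--(iii), so it is minimal and must be included; the paper lists it as a third case alongside the figure-eight and the theta graph. The omission happens to be harmless for the statement, because the dumbbell also evaluates to $\pm\pair{\hat{\sigma},\hat{\tau}}$ with $\hat{\sigma},\hat{\tau}\in\{\hat{\omega},-\hat{\omega},\hat{\beta}\}$, hence again to $\pm\pair{\hat{\omega},\hat{\omega}}$, $\pm\pair{\hat{\beta},\hat{\omega}}$ or $\pair{\hat{\beta},\hat{\beta}}=0$; but unlike the figure-eight this is not a tautology on a single copy of $X$: one needs $\langle \hat{\Delta},\hat{\sigma}_1,\hat{\tau}_2\,|\,X^2\rangle=\pair{\hat{\sigma},\hat{\tau}}$, which follows for instance from Proposition \ref{projII} (or the recursion (\ref{global_recursive}), whose archimedean and non-archimedean integral terms vanish because $\int g_{\mu,v}(x,y)\,\mu_v(y)=0$). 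So the case analysis must be corrected and this extra evaluation supplied before your argument is complete.
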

\begin{proof} Assume that $\Gamma$ is not relevant, that is, some component of $\Gamma$ has $\chi \notin \{0,-1\}$. As $\chi(\Gamma)=-1$ there is in fact a component with $\chi=1$. This component is a tree, which is contractible to a point, and hence has vanishing intersection product. By Proposition \ref{multiplicative}, the intersection product of $\Gamma$ vanishes. Assume that $\Gamma$ is relevant, and let $\Gamma'$ be a connected component of $\Gamma$. By Proposition \ref{multiplicative}, it suffices to prove that $\mu(\Gamma')$ is either $0$, $2-2g$, $2$, $2g-2$ or plus or minus one of the intersection numbers $\langle \hat{\Delta}, \hat{\Delta}, \hat{\Delta} \rangle, \langle \hat{\beta}, \hat{\omega} \rangle, \langle \hat{\omega}, \hat{\omega} \rangle$.
By Proposition \ref{contractions}, we may assume without loss of generality
that $\Gamma'$ is minimal. If $\chi(\Gamma')=0$, then $\Gamma'$ consists of one vertex with one loop attached to it.  If $\hat{\sigma}$ is the label of the unique loop of $\Gamma'$ we have $\mu(\Gamma')=\deg \sigma$, in particular $\mu(\Gamma') \in \{2-2g,2,2g-2\}$. If $\chi(\Gamma')=-1$ then as we saw above $\Gamma'$ has at most two vertices and hence $\Gamma'$ is one of the following three loop-labelled graphs:
\begin{center}
\begin{tikzpicture}
\draw (-1,0) circle [radius=0.5];
\draw (1,0) circle [radius=0.5];
\draw (-0.5,0) node[circle, draw, fill=black, inner sep=0pt, minimum width=4pt]{} -- (0.5,0) node[circle, draw, fill=black, inner sep=0pt, minimum width=4pt]{};
\node at (-1.75,0) {$\hat{\sigma}$};
\node at (1.75,0) {$\hat{\tau}$};
\node [above] at (0,0) {};
\end{tikzpicture}
\end{center}
or
\begin{center}
\begin{tikzpicture}
\draw (-0.5,0) circle [radius=0.5];
\draw (0.5,0) circle [radius=0.5];
\draw (0,0) node[circle, draw, fill=black, inner sep=0pt, minimum width=4pt]{};
\node at (-1.25,0) {$\hat{\sigma}$};
\node at (1.25,0) {$\hat{\tau}$};
\end{tikzpicture}
\end{center}
or
\begin{center}
\begin{tikzpicture}
\draw (0,0) circle [radius=0.75];
\draw (-0.75,0) node[circle, draw, fill=black, inner sep=0pt, minimum width=4pt]{} -- (0.75,0) node[circle, draw, fill=black, 
inner sep=0pt, minimum width=4pt]{};
\node at (0.75,0.75) {};
\node [above] at (0,0) {};
\node at (0.75,-0.75) {};
\end{tikzpicture}
\end{center}
where $\hat{\sigma}, \hat{\tau} \in \{  \hat{\omega}, -\hat{\omega}, \hat{\beta} \}$. In the first two cases, we have $\mu(\Gamma')=\langle \hat{\sigma},\hat{\tau} \rangle$ so that $\mu(\Gamma') \in \{ \pair{\hat{\beta},\hat{\omega}}, \pair{\hat{\omega},\hat{\omega}}, \pair{\hat{\beta},\hat{\beta}}\}$, up to sign. Note that $\pair{\hat{\beta},\hat{\beta}}=0$. In the third case, we have $\mu(\Gamma')=
\langle \hat{\Delta}, \hat{\Delta}, \hat{\Delta} \rangle$. This proves the proposition.
\end{proof}

\section{Surfaces} \label{surfaces}

Let $X$ be a smooth projective geometrically connected curve of genus $g \geq 2$ with semistable reduction over a number field $k$. Let $(J,\lambda)$ denote its jacobian. In this section we illustrate Theorem \ref{mainintro} by considering the Abel-Jacobi images of $X^2$ as well as the difference surface $F$ inside $J$.

Let $\alpha \in \mathrm{Div}^1 X$, and as before let $\hat{\alpha}$ be shorthand for the line bundle $\oo(\alpha)$ on $X$ endowed with its canonical admissible adelic metric. Recall that we put $\hat{\beta}=2\hat{\alpha}-\pair{\hat{\alpha},\hat{\alpha}}$. We shall frequently make use of the following observations, that easily follow from the graphical calculus discussed in the previous section. Let $\hat{\sigma}, \hat{\tau}, \hat{\psi}$ be any of the admissible line bundles $\hat{\omega}$, $\hat{\beta}$ on $X$, and let $p_1, p_2 \colon X^2 \to X$ denote the two projections. For $i=1,2$ let $\hat{\sigma}_i, \hat{\tau}_i, \hat{\psi}_i$ denote the pullbacks of $\hat{\sigma}, \hat{\tau}, \hat{\psi}$ along $p_i$. 
Then we have the relations 
\[ \langle \hat{\Delta}, \hat{\Delta},  \hat{\sigma}_i \rangle = -\pair{\hat{\omega},\hat{\sigma}}  \]
and
\[ \langle \hat{\Delta}, \hat{\sigma}_i, \hat{\tau}_j \rangle = \langle \hat{\sigma}, \hat{\tau} \rangle \, . \]
Next, a triple intersection 
\[ \langle \hat{\sigma}_i, \hat{\tau}_j,\hat{\psi}_k \rangle  \]
vanishes if $i=j=k$. On the other hand if $i=j$ and $k$ is different from $i$ and $j$ then we have 
\[ \langle \hat{\sigma}_i, \hat{\tau}_j,\hat{\psi}_k \rangle = \deg (\psi) \pair{\hat{\sigma},\hat{\tau}} 
\, . \]
Let $Z_{2,\alpha}$ be the image of $X^2$ in $J$ under the map $f_{2,\alpha} \colon X^2 \to J$. By Remark~\ref{degrees} we have $ \deg f_{2,\alpha} \deg_\ll Z_{2,\alpha} = 2g(g-1)$. Let $F =Z_{(1,-1)}\subset J$ be the image of $X^2$ under $f_{(1,-1)}$. The homology classes $f_{(1,-1)*}[X^2]$ and $f_{2,\alpha,*}[X^2]$ in $H_4(J)$ are equal, and hence $\deg f_{(1,-1)} \deg_\ll F =  2g(g-1)$ as well.
\begin{thm} \label{heightdifferencesurface} The equality
\[ 12g(g-1) \,[k:\qq]\,\h'_\ll(F) = (3g-1)\pair{\hat{\omega},\hat{\omega}} - 2 \sum_{v \in M(k)} \varphi(X_v) \log Nv \]
holds in $\rr$.
\end{thm}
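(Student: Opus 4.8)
The plan is to read the statement off from Theorem~\ref{main}, applied with $r=2$, $m=(1,-1)$, and base divisor $D=0$. Since $d=m_1+m_2=0$ this is a legitimate choice of $D$, the map $f_{(1,-1),0}$ is precisely the difference map $X^2\to J$ with image $F$, and in Theorem~\ref{main} the terms $\hat D_i$ and $\pair{\hat D,\hat D}$ drop out. The relevant line bundle there reduces to the integrable adelic line bundle
\[ \hat\mm=\hat\omega_1+\hat\omega_2+2\hat\Delta \]
on $X^2$ (with $\hat\Delta=\hat\Delta_{12}$, as $p_{12}$ is the identity), and Theorem~\ref{main} together with the equality $\deg f_{(1,-1)}\cdot\deg_\ll F=2g(g-1)$ noted just before the statement gives
\[ \h'_\ll(F)=\frac{\langle\hat\mm^3\,|\,X^2\rangle}{2^3\cdot 2g(g-1)\cdot 3\cdot[k:\qq]}=\frac{\langle\hat\mm^3\,|\,X^2\rangle}{48\,g(g-1)\,[k:\qq]}\,. \]
So everything comes down to evaluating the real number $\langle\hat\mm^3\,|\,X^2\rangle$.

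To do this I would expand $\langle(\hat\omega_1+\hat\omega_2+2\hat\Delta)^3\,|\,X^2\rangle$ by symmetry and multilinearity of the integrable intersection product and reduce each of the ten resulting monomials using the relations recalled at the opening of this section together with Proposition~\ref{triplediagonal}. The inputs are $\langle\hat\omega_i,\hat\omega_i,\hat\omega_i\,|\,X^2\rangle=0$, and $\langle\hat\omega_i,\hat\omega_i,\hat\omega_j\,|\,X^2\rangle=(2g-2)\pair{\hat\omega,\hat\omega}$ for $i\neq j$, and $\langle\hat\Delta,\hat\omega_i,\hat\omega_j\,|\,X^2\rangle=\pair{\hat\omega,\hat\omega}$ for all $i,j$, and $\langle\hat\Delta,\hat\Delta,\hat\omega_i\,|\,X^2\rangle=-\pair{\hat\omega,\hat\omega}$, and finally $\langle\hat\Delta,\hat\Delta,\hat\Delta\,|\,X^2\rangle=\pair{\hat\omega,\hat\omega}-\sum_{v\in M(k)}\varphi(X_v)\log Nv$ by Proposition~\ref{triplediagonal}. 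Collecting these contributions with the multinomial coefficients $1,1,1,3,3,3,3,3,3,6$ should give
\[ \langle\hat\mm^3\,|\,X^2\rangle=(12g-4)\pair{\hat\omega,\hat\omega}-8\sum_{v\in M(k)}\varphi(X_v)\log Nv\,, \]
and inserting this into the formula for $\h'_\ll(F)$ above, after cancelling the factor $4$ against $48g(g-1)$, yields the claimed identity.

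I do not expect a genuine obstacle here; the only real work is the sign-and-coefficient bookkeeping in the ten-term expansion. The one point deserving an explicit word is that $\langle\hat\Delta,\hat\omega_i,\hat\omega_j\,|\,X^2\rangle=\pair{\hat\omega,\hat\omega}$ is needed not only for $i\neq j$ but also in the ``collapsed'' case $i=j$, as in the terms $\langle\hat\omega_i,\hat\omega_i,2\hat\Delta\,|\,X^2\rangle$; this follows from the contraction calculus of Section~\ref{proofmain} by contracting the $\hat\Delta$-edge off its degree-one endpoint just as in the case $i\neq j$, so it needs no new input.
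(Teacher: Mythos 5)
Your proposal is correct and follows essentially the same route as the paper: apply Theorem \ref{main} with $m=(1,-1)$, $D=0$ to reduce to evaluating $\langle(\hat\omega_1+\hat\omega_2+2\hat\Delta)^3\,|\,X^2\rangle$, then expand using the contraction rules and Proposition \ref{triplediagonal}; your ten-term bookkeeping (including the $i=j$ case of $\langle\hat\Delta,\hat\omega_i,\hat\omega_j\rangle=\pair{\hat\omega,\hat\omega}$) reproduces the paper's value $(12g-4)\pair{\hat\omega,\hat\omega}-8\sum_{v}\varphi(X_v)\log Nv$ and hence the stated identity.
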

\begin{proof} Let $\Mhat$ denote the adelic line bundle $ \Mhat = 2\hat{\Delta} + \hat{\omega}_1 + \hat{\omega}_2$ on $X^2$. Using that $\deg f_{(1,-1)} \deg_\ll F =  2g(g-1)$ we obtain from Theorem \ref{main} that
\[ \label{aux3} 48g(g-1)\,[k:\qq] \,\h'_\ll(F) = \Mhat^3 \, . 
\] 
We calculate, using Proposition \ref{triplediagonal} and the rules stated above,
\[ \begin{split} \Mhat^3 = & \,\, 8  \langle \hat{\Delta}, \hat{\Delta}, \hat{\Delta} \rangle + 12 \langle \hat{\Delta},\hat{\Delta},\hat{\omega}_1+\hat{\omega}_2 \rangle \\ & + 6 \langle \hat{\Delta}, \hat{\omega}_1+\hat{\omega}_2, \hat{\omega}_1+\hat{\omega}_2 \rangle + \langle \hat{\omega}_1+\hat{\omega}_2, \hat{\omega}_1+\hat{\omega}_2, \hat{\omega}_1+\hat{\omega}_2 \rangle \\
 = & \,\,8  \langle \hat{\Delta}, \hat{\Delta}, \hat{\Delta} \rangle - 24 \pair{\hat{\omega},\hat{\omega}} + 24 \pair{\hat{\omega},\hat{\omega}} + 6(2g-2) \pair{\hat{\omega},\hat{\omega}} \\ = & \,\,8 \pair{\hat{\omega},\hat{\omega}} - 8 \sum_{v \in M(k)} \varphi(X_v) \log Nv+ 12(g-1) \pair{\hat{\omega},\hat{\omega}} 
 \\ = & \,\, (12g-4)\pair{\hat{\omega},\hat{\omega}} - 8 \sum_{v \in M(k)} \varphi(X_v) \log Nv \, . \end{split} \]
The required formula follows. 
\end{proof}
\begin{thm} \label{square} The equality
\[ \begin{split} 12g&(g-1)\, [k:\qq]\,\h'_\ll(Z_{2,\alpha}) \\ & = \frac{3g^2-8g-1}{g-1}\pair{\hat{\omega},\hat{\omega}} + 2 \sum_{v \in M(k)} \varphi(X_v)\log Nv + 48(g-1)(g-2) \,[k:\qq]\,\h'_\ll(x_\alpha) \end{split} \]
holds.
\end{thm}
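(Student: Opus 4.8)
The plan is to apply Theorem~\ref{main} with $r=2$, $m=(1,1)$ and $D=2\alpha$, so that $d=\sum_i m_i=2$ and $Z=Z_{2,\alpha}$, following the reduction at the start of Section~\ref{proofmain} and mimicking the proof of Theorem~\ref{heightdifferencesurface}. Writing $\hat{\beta}=2\hat{\alpha}-\pair{\hat{\alpha},\hat{\alpha}}$ and $\hat{\omega}_i=p_i^*\hat{\omega}$, $\hat{\beta}_i=p_i^*\hat{\beta}$, $\hat{\Delta}_{12}=p_{12}^*\hat{\Delta}$ on $X^2$, the integrable line bundle $\hat{\mm}$ of Theorem~\ref{main} specializes to
\[ \hat{\mm} = \hat{\omega}_1+\hat{\omega}_2-2\hat{\Delta}_{12}+2\hat{\beta}_1+2\hat{\beta}_2 \, , \]
and, using $\deg f_{2,\alpha}\,\deg_\ll Z_{2,\alpha}=2g(g-1)$ recorded in Section~\ref{surfaces}, Theorem~\ref{main} becomes $48g(g-1)\,[k:\qq]\,\h'_\ll(Z_{2,\alpha}) = \langle \hat{\mm}^3\,|\,X^2\rangle$. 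Thus everything reduces to evaluating the single triple intersection number $\langle \hat{\mm}^3\,|\,X^2\rangle$.

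To evaluate it I would expand $\langle \hat{\mm}^3\,|\,X^2\rangle$ multilinearly into triple intersection numbers of the generators $\hat{\omega}_i$, $\hat{\Delta}_{12}$, $\hat{\beta}_i$, and then evaluate each one with the elementary rules collected at the start of Section~\ref{surfaces}: $\langle \hat{\Delta},\hat{\Delta},\hat{\sigma}_i\rangle=-\pair{\hat{\omega},\hat{\sigma}}$, $\langle \hat{\Delta},\hat{\sigma}_i,\hat{\tau}_j\rangle=\pair{\hat{\sigma},\hat{\tau}}$, and $\langle \hat{\sigma}_i,\hat{\tau}_j,\hat{\psi}_k\rangle$ equal to $0$ when $i=j=k$ and to $\deg(\psi)\pair{\hat{\sigma},\hat{\tau}}$ when $i=j\neq k$; together with $\pair{\hat{\beta},\hat{\beta}}=0$, $\deg\beta=2$, $\deg\omega=2g-2$, and Proposition~\ref{triplediagonal}, which gives $\langle \hat{\Delta},\hat{\Delta},\hat{\Delta}\rangle=\pair{\hat{\omega},\hat{\omega}}-\sum_{v}\varphi(X_v)\log Nv$. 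The point that keeps the $\varphi$-terms under control is that every monomial in the expansion is a rational multiple of $\pair{\hat{\omega},\hat{\omega}}$ or $\pair{\hat{\omega},\hat{\beta}}$ apart from the single contribution $8\,\langle \hat{\Delta},\hat{\Delta},\hat{\Delta}\rangle$, so that all of $\sum_v\varphi(X_v)\log Nv$ originates from there. Collecting coefficients one expects to obtain
\[ \langle \hat{\mm}^3\,|\,X^2\rangle = (12g-44)\pair{\hat{\omega},\hat{\omega}} + 48(g-2)\pair{\hat{\omega},\hat{\beta}} + 8\sum_{v\in M(k)}\varphi(X_v)\log Nv \, . \]

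Finally I would trade $\pair{\hat{\omega},\hat{\beta}}$ for the point height using identity~(\ref{htpoint}), which yields $\pair{\hat{\beta},\hat{\omega}}=4(g-1)\,[k:\qq]\,\h'_\ll(x_\alpha)+\tfrac{1}{2g-2}\pair{\hat{\omega},\hat{\omega}}$; substituting this, dividing through by $4$, and simplifying the coefficient of $\pair{\hat{\omega},\hat{\omega}}$ to $(3g^2-8g-1)/(g-1)$ gives exactly the asserted identity. I do not anticipate a genuine obstacle: the argument is a direct, mechanical application of Theorem~\ref{main}, the graphical calculus of Section~\ref{proofmain}, and Proposition~\ref{triplediagonal}, essentially identical in structure to the proof of Theorem~\ref{heightdifferencesurface}. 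The only real work is organizing the expansion of $\langle \hat{\mm}^3\,|\,X^2\rangle$ — about ten grouped monomials, each splitting further into the two or three index patterns above — carefully enough to avoid arithmetic slips, and verifying that every $\varphi$-contribution localizes in the $\hat{\Delta}^3$ term.
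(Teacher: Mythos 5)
Your proposal is correct and follows essentially the same route as the paper: apply Theorem~\ref{main} with $D=2\alpha$ to get $48g(g-1)\,[k:\qq]\,\h'_\ll(Z_{2,\alpha})=\langle\hat{\mm}^3\,|\,X^2\rangle$, expand the cube using the intersection rules of Section~\ref{surfaces} together with Proposition~\ref{triplediagonal} (the paper merely organizes this via $\Mhat_0=\Mhat+2\hat{\Delta}$), arrive at exactly the paper's intermediate identity $(12g-44)\pair{\hat{\omega},\hat{\omega}}+48(g-2)\pair{\hat{\beta},\hat{\omega}}+8\sum_v\varphi(X_v)\log Nv$, and then substitute (\ref{htpoint}). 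One small slip in your prose: since $\hat{\mm}$ contains $-2\hat{\Delta}$, the diagonal cube enters with coefficient $-8$, i.e.\ as $-8\,\langle\hat{\Delta},\hat{\Delta},\hat{\Delta}\rangle$, which is what produces the $+8\sum_v\varphi(X_v)\log Nv$ you correctly state in the final expansion.
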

\begin{proof} Let $\Mhat$ be the adelic line  bundle $ \Mhat = \hat{\omega}_1 + \hat{\omega}_2-2\hat{\Delta} + 2\hat{\beta}_1 + 2\hat{\beta}_2 $
on $X^2$. Using that $\deg f_{2,\alpha} \deg_\ll Z_{2,\alpha}  = 2g(g-1)$ we obtain from Theorem \ref{main} (with $D=2\alpha$) that
\[ 48g(g-1)\,[k:\qq] \,\h'_\ll(Z_{2,\alpha}) = \Mhat^3 \, . \]
It thus suffices to show that
\begin{equation} \label{aux1} \begin{split} \Mhat^3 = &\,\, \frac{12g^2-32g-4}{g-1}\pair{\hat{\omega},\hat{\omega}} \\ & + 8 \sum_{v \in M(k)} \varphi(X_v)\log Nv + 192(g-1)(g-2)\,[k:\qq]\,\h'_\ll(x_\alpha) \, . \end{split} \end{equation}
We recall from equation (\ref{htpoint}) that
\[2 \, [k:\qq] \,\h'_\ll(x_\alpha)=\frac{1}{2g-2}\pair{\hat{\beta},\hat{\omega}} - \frac{1}{(2g-2)^2}\pair{\hat{\omega},\hat{\omega}} \, . \]
Inserting this in (\ref{aux1}) we are reduced to proving that
\begin{equation} \label{aux2} \Mhat^3  = (12g-44)\pair{\hat{\omega},\hat{\omega}} + 8 \sum_{v \in M(k)} \varphi(X_v)\log Nv +48(g-2)\pair{\hat{\beta},\hat{\omega}} \, . \end{equation}
Write
\[ \Mhat_0 = \Mhat + 2 \hat{\Delta}=\hat{\omega}_1 + \hat{\omega}_2 + 2\hat{\beta}_1 + 2\hat{\beta}_2 \, . \]
Then we have
\begin{equation} \label{Mhatexpanded} \Mhat^3 = -8\langle \hat{\Delta}, \hat{\Delta}, \hat{\Delta} \rangle + 12 \langle \hat{\Delta},\hat{\Delta},\Mhat_0 \rangle - 6 \langle \hat{\Delta}, \Mhat_0, \Mhat_0 \rangle + \Mhat_0^3 \, .  
\end{equation}
We compute each of the terms in this expression. From Proposition \ref{triplediagonal} we recall that 
\[ \langle \hat{\Delta}, \hat{\Delta}, \hat{\Delta} \rangle = \pair{\hat{\omega},\hat{\omega}} -  \sum_{v \in M(k)} \varphi(X_v) \log Nv \, . \]
Next we have
\[ \langle \hat{\Delta},\hat{\Delta},\Mhat_0 \rangle = -2\pair{\hat{\omega},\hat{\omega}}-4\pair{\hat{\beta},\hat{\omega}} \]
and, using that $\pair{\hat{\beta},\hat{\beta}}=0$,
\[ \langle \hat{\Delta}, \Mhat_0, \Mhat_0 \rangle = 4\pair{\hat{\omega},\hat{\omega}}+16\pair{\hat{\beta},\hat{\omega}} \, . \]
Finally we have
\[ \begin{split} \Mhat_0^3  = & \,\, 3\left( \langle \hat{\omega}_1,\hat{\omega}_1,\hat{\omega}_2 \rangle + 2 \langle \hat{\omega}_1,\hat{\omega}_1, \hat{\beta}_2 \rangle + \langle \hat{\omega}_2, \hat{\omega}_2, \hat{\omega}_1 \rangle + 2 \langle \hat{\omega}_2,\hat{\omega}_2,\hat{\beta}_1 \rangle \right) \\
 & + 6 \left(  2 \langle \hat{\beta}_1,\hat{\omega}_1,\hat{\omega}_2 \rangle + 2 \langle \hat{\beta}_2, \hat{\omega}_1,\hat{\omega}_2 \rangle + 4 \langle \hat{\beta}_1, \hat{\beta}_2, \hat{\omega}_1 \rangle + 4 \langle \hat{\beta}_1, \hat{\beta}_2, \hat{\omega}_2 \rangle \right) \\
 = & \,\, 3 \left( (2g-2) \pair{\hat{\omega},\hat{\omega}} + 4 \pair{\hat{\omega},\hat{\omega}}  + (2g-2) \pair{\hat{\omega},\hat{\omega}} + 4 \pair{\hat{\omega},\hat{\omega}} \right) \\
 & + 6\left( 2(2g-2) \pair{\hat{\beta},\hat{\omega}} + 2(2g-2)\pair{\hat{\beta},\hat{\omega}} + 8 \pair{\hat{\beta},\hat{\omega}} + 8 \pair{\hat{\beta},\hat{\omega}}\right) \\
  = & \,\,12(g+1)\pair{\hat{\omega},\hat{\omega}}  + 48 (g+1) \pair{\hat{\beta},\hat{\omega}} \, .
\end{split} \]
Summing all contributions with weights as in (\ref{Mhatexpanded}) we find (\ref{aux2}) as required.
\end{proof}

We finish this section by showing that we can write the height of the Gross-Schoen cycle essentially as the difference of the N\'eron-Tate heights of $F$ and $Z_{2,\alpha}$.
\begin{prop} Let $\alpha \in \mathrm{Div}^1 X$. Let $\pair{\Delta_\alpha,\Delta_\alpha}$ denote the height of the Gross-Schoen cycle on $X^3$ associated to $\alpha$. Then the equality
\[  \pair{\Delta_\alpha,\Delta_\alpha} = 3g(g-1)\,[k:\qq]\,(\h'_\ll(F)-\h'_\ll(Z_{2,\alpha}) )+ 12(g-1)^2\,[k:\qq]\,\h'_\ll(x_\alpha)  \]
holds in $\rr$.
\end{prop}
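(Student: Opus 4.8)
The plan is to compare the three known closed formulas: Zhang's formula (\ref{htgs}) for $\pair{\Delta_\alpha,\Delta_\alpha}$, Theorem \ref{heightdifferencesurface} for $\h'_\ll(F)$, and Theorem \ref{square} for $\h'_\ll(Z_{2,\alpha})$, and verify that the claimed linear identity among them holds. This is essentially a bookkeeping computation: all four quantities $\pair{\Delta_\alpha,\Delta_\alpha}$, $\h'_\ll(F)$, $\h'_\ll(Z_{2,\alpha})$, $\h'_\ll(x_\alpha)$ have been expressed, after clearing the factor $[k:\qq]$ where needed, as explicit $\qq$-linear combinations of the three ``basis'' quantities $\pair{\hat\omega,\hat\omega}$, $\sum_{v\in M(k)}\varphi(X_v)\log Nv$, and $[k:\qq]\,\h'_\ll(x_\alpha)$. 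So the strategy is to substitute and check that the coefficients match.

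Concretely, first I would rewrite Theorem \ref{heightdifferencesurface} as
\[ 3g(g-1)\,[k:\qq]\,\h'_\ll(F) = \tfrac{3g-1}{4}\pair{\hat\omega,\hat\omega} - \tfrac{1}{2}\sum_{v\in M(k)}\varphi(X_v)\log Nv \]
and Theorem \ref{square} as
\[ 3g(g-1)\,[k:\qq]\,\h'_\ll(Z_{2,\alpha}) = \tfrac{3g^2-8g-1}{4(g-1)}\pair{\hat\omega,\hat\omega} + \tfrac{1}{2}\sum_{v\in M(k)}\varphi(X_v)\log Nv + 12(g-1)(g-2)\,[k:\qq]\,\h'_\ll(x_\alpha) \, , \]
so that
\[ 3g(g-1)\,[k:\qq]\,(\h'_\ll(F)-\h'_\ll(Z_{2,\alpha})) = \left(\tfrac{3g-1}{4} - \tfrac{3g^2-8g-1}{4(g-1)}\right)\pair{\hat\omega,\hat\omega} - \sum_{v\in M(k)}\varphi(X_v)\log Nv - 12(g-1)(g-2)\,[k:\qq]\,\h'_\ll(x_\alpha) \, . \]
The coefficient of $\pair{\hat\omega,\hat\omega}$ simplifies: $\tfrac{(3g-1)(g-1) - (3g^2-8g-1)}{4(g-1)} = \tfrac{3g^2-4g+1 - 3g^2+8g+1}{4(g-1)} = \tfrac{4g+2}{4(g-1)} = \tfrac{2g+1}{2(g-1)}$. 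Adding $12(g-1)^2\,[k:\qq]\,\h'_\ll(x_\alpha)$ to both sides and multiplying through, one gets
\[ 3g(g-1)\,[k:\qq]\,(\h'_\ll(F)-\h'_\ll(Z_{2,\alpha})) + 12(g-1)^2[k:\qq]\,\h'_\ll(x_\alpha) = \tfrac{2g+1}{2(g-1)}\pair{\hat\omega,\hat\omega} - \sum_{v\in M(k)}\varphi(X_v)\log Nv + 12(g-1)[k:\qq]\,\h'_\ll(x_\alpha) \, , \]
since $-12(g-1)(g-2) + 12(g-1)^2 = 12(g-1)$. This is exactly $\tfrac{1}{2g-2}$ times the right-hand side of Zhang's formula (\ref{htgs}), because $\tfrac{1}{2g-2}\cdot\tfrac{2g+1}{2g-2}\pair{\hat\omega,\hat\omega} = \tfrac{2g+1}{(2g-2)^2}\pair{\hat\omega,\hat\omega}$... wait, let me instead simply observe that multiplying (\ref{htgs}) by the appropriate scalar is the wrong normalization; the cleaner route is to divide (\ref{htgs}) directly.

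Actually the cleanest final step is this: I would invoke (\ref{htgs}) in the form $\pair{\Delta_\alpha,\Delta_\alpha} = \tfrac{2g+1}{2g-2}\pair{\hat\omega,\hat\omega} - \sum_{v}\varphi(X_v)\log Nv + 12(g-1)[k:\qq]\,\h'_\ll(x_\alpha)$ and note that I must reconcile the $\pair{\hat\omega,\hat\omega}$-coefficient $\tfrac{2g+1}{2g-2}$ with the $\tfrac{2g+1}{2(g-1)} = \tfrac{2g+1}{2g-2}$ obtained above — these agree. Likewise the $\varphi$-coefficients ($-1$) and the $\h'_\ll(x_\alpha)$-coefficients ($12(g-1)[k:\qq]$) agree term by term. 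Hence the right-hand side of the proposition equals the right-hand side of (\ref{htgs}), which is $\pair{\Delta_\alpha,\Delta_\alpha}$, completing the proof. The main (and only real) obstacle is arithmetic care: keeping the factors of $[k:\qq]$ and the various normalizing constants ($12g(g-1)$ versus $3g(g-1)$, the footnoted $6(g-1)$ versus $12(g-1)$ discrepancy already flagged in the paper) straight, and correctly eliminating $\pair{\hat\beta,\hat\omega}$ in favor of $\h'_\ll(x_\alpha)$ via (\ref{htpoint}) wherever it was used in deriving Theorems \ref{heightdifferencesurface} and \ref{square}. There is no conceptual difficulty; it is a verification that three previously established closed formulas are linearly dependent in the expected way.
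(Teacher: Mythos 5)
Your proposal is correct and follows essentially the same route as the paper: take the difference of Theorem \ref{heightdifferencesurface} and Theorem \ref{square}, simplify the $\pair{\hat\omega,\hat\omega}$-coefficient to $\frac{2g+1}{2g-2}$, and then substitute Zhang's formula (\ref{htgs}) to eliminate $\pair{\hat\omega,\hat\omega}$ and $\sum_v \varphi(X_v)\log Nv$, the only difference being the harmless normalization (you divide by $4$ at the start, the paper at the end). The brief mis-step about multiplying (\ref{htgs}) by $\frac{1}{2g-2}$ is self-corrected, and the final coefficient comparison is accurate.
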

\begin{proof} Theorem \ref{square} and Theorem \ref{heightdifferencesurface} together give
\[ \begin{split} 12g&\,(g-1)\,[k:\qq](\h'_\ll(F)-\h'_\ll(Z_{2,\alpha}) )\\ & =  4\left( \frac{2g+1}{2g-2}\pair{\hat{\omega},\hat{\omega}} - \sum_{v \in M(k)} \varphi(X_v)\log Nv \right) - 48(g-1)(g-2)\, [k:\qq]\,\h'_\ll(x_\alpha) \, . \end{split} \]
Using Zhang's formula (\ref{htgs}) we find
\[ 12g(g-1)[k:\qq](\h'_\ll(F)-\h'_\ll(Z_{2,\alpha}) )= 4 \pair{\Delta_\alpha,\Delta_\alpha} - 48(g-1)^2 \,[k:\qq]\,\h'_\ll(x_\alpha)  \, . \]
The result follows.
\end{proof}
\begin{cor} Assume that $(2g-2)\alpha$ is a canonical divisor. Then
\[  \pair{\Delta_\alpha,\Delta_\alpha} = 3g(g-1)\,[k:\qq]\,(\h'_\ll(F)-\h'_\ll(Z_{2,\alpha}) )\, .  \]
In particular, the equivalence
\[  \pair{\Delta_\alpha,\Delta_\alpha} \geq 0 \Longleftrightarrow \h'_\ll(F) \geq \h'_\ll(Z_{2,\alpha}) \]
holds.
\end{cor}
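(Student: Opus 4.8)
The plan is to read this off the preceding proposition, the only point requiring comment being that the stated hypothesis forces $\h'_\ll(x_\alpha)$ to vanish. Indeed, $x_\alpha = \alpha - \frac{1}{2g-2}K_X$, so $(2g-2)\,x_\alpha$ is the class of the divisor $(2g-2)\alpha - K_X$; the assumption that $(2g-2)\alpha$ is a canonical divisor says exactly that this divisor is principal, i.e. that $x_\alpha$ is zero in $\Pic^0 X \otimes \qq$ (equivalently, that $x_\alpha$ is a torsion class in the jacobian $J$). Since the N\'eron-Tate height is a quadratic form on $\Pic^0 X \otimes \qq$ — this is part of the package recalled in Section \ref{admissible}, and also follows from the relation $\h'_\ll([2g-2](x_\alpha)) = (2g-2)^2\,\h'_\ll(x_\alpha)$ together with $[2g-2](x_\alpha)=0$ — we obtain $\h'_\ll(x_\alpha) = 0$.

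Granting this, I would substitute $\h'_\ll(x_\alpha) = 0$ into the identity
\[ \pair{\Delta_\alpha,\Delta_\alpha} = 3g(g-1)\,[k:\qq]\,(\h'_\ll(F)-\h'_\ll(Z_{2,\alpha})) + 12(g-1)^2\,[k:\qq]\,\h'_\ll(x_\alpha) \]
of the previous proposition; the final term vanishes and the displayed equality of the corollary drops out. For the equivalence, I would simply observe that the assumption $g \geq 2$ forces $3g(g-1)\,[k:\qq] > 0$, so that $\pair{\Delta_\alpha,\Delta_\alpha}$ and $\h'_\ll(F)-\h'_\ll(Z_{2,\alpha})$ have the same sign; in particular $\pair{\Delta_\alpha,\Delta_\alpha} \geq 0$ if and only if $\h'_\ll(F) \geq \h'_\ll(Z_{2,\alpha})$.

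I do not anticipate any genuine obstacle here: all the substantive work sits in the previous proposition and, through it, in Theorems \ref{heightdifferencesurface} and \ref{square} and Zhang's formula (\ref{htgs}). The only step that is not completely formal is the identification of the hypothesis on $\alpha$ with the vanishing of the class of $x_\alpha$, and this is immediate from the definition $x_\alpha = \alpha - \frac{1}{2g-2}K_X$.
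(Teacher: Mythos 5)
Your argument is correct and is exactly the (implicit) route the paper takes: the hypothesis that $(2g-2)\alpha$ is canonical makes $x_\alpha$ vanish in $\Pic^0 X \otimes \qq$, so $\h'_\ll(x_\alpha)=0$ and the identity of the preceding proposition reduces to the stated equality, the equivalence following since $3g(g-1)[k:\qq]>0$ under the standing assumption $g\geq 2$. No gaps.
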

\begin{remark} When $X$ is hyperelliptic, we have $F=Z_{2,\alpha}$ and we obtain another proof that $\pair{\Delta_\alpha,\Delta_\alpha}= 0$ in this case.
\end{remark}

\section{N\'eron-Tate height of a symmetric theta divisor}
\label{proofautforjac}

Let $X$ be a smooth projective geometrically connected curve of genus $g \geq 1$ with semistable reduction over a number field $k$. Let $(J,\lambda)$ be the jacobian of $X$, and let $\Theta$ be a symmetric effective divisor on $J$ such that $\ll=\oo_J(\Theta)$ is an ample line bundle defining the principal polarization $\lambda$.

Our first aim in this section is to prove the following result. 
\begin{thm} \label{catalyzer} The identity
\[ 2g \,[k:\qq] \, \h'_\ll(\Theta) = \frac{1}{12} \pair{\hat{\omega},\hat{\omega}} + \frac{1}{6} \sum_{v \in M(k)} \varphi(X_v) \log Nv \]
holds in $\rr$.
\end{thm}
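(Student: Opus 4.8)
The plan is to realize a symmetric theta divisor as one of the tautological cycles $Z_{m,\alpha}$ and then apply Theorem \ref{main}. After enlarging $k$ to a finite extension — which multiplies $\pair{\hat\omega,\hat\omega}$, $\sum_v\varphi(X_v)\log Nv$ and $[k:\qq]$ by the degree of the extension and leaves $\h'_\ll(\Theta)$ unchanged, so the asserted identity is insensitive to it — we may pick $\alpha\in\Div^1 X$ with $x_\alpha=0$, i.e.\ with $(2g-2)\alpha$ canonical. A short computation with Serre duality then shows that $Z_{g-1,\alpha}$ (the image of $X^{g-1}\to J$, $(x_i)\mapsto[\sum_i x_i-(g-1)\alpha]$) is a symmetric theta divisor; and since any two symmetric theta divisors differ by translation over a point of $J[2]$, while $\h'_\ll$ is invariant under translation over torsion points (for $\epsilon\in J[n]$ one has $[n]\circ t_\epsilon=[n]$, hence $n^2\h'_\ll(t_\epsilon Z)=\h'_\ll([n]Z)=n^2\h'_\ll(Z)$), it suffices to prove the formula for $\h'_\ll(Z_{g-1,\alpha})$. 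For $g=1$ both sides vanish; for $g=2$ the curve is hyperelliptic, so $\pair{\hat\omega,\hat\omega}=\tfrac{2}{5}\sum_v\varphi(X_v)\log Nv$ and the claim follows from (\ref{heightcurve}) with $x_\alpha=0$. So assume $g\ge 3$.

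Next I would run Theorem \ref{main} with the all-$1$ tuple $m$ of length $r=g-1$ and $D=(g-1)\alpha$. Here $\deg f_{g-1,\alpha}=(g-1)!$ (the map $X^{g-1}\to\Sym^{g-1}X$ is generically $(g-1)!$-to-$1$ and $\Sym^{g-1}X\to\Pic^{g-1}X$ is birational onto its image) and $\deg_\ll Z_{g-1,\alpha}=g!$ by Remark \ref{degrees}. Rewriting the $\hat D$-terms exactly as at the start of Section \ref{proofmain} (this is where $x_\alpha=0$ enters, through $\hat\beta_i=2\hat\alpha_i-\pair{\hat\alpha,\hat\alpha}$), Theorem \ref{main} becomes
\[ 2g\,[k:\qq]\,\h'_\ll(Z_{g-1,\alpha})=\frac{\langle\Mhat^{\,g}\mid X^{g-1}\rangle}{2^{g-1}(g-1)!\,g!}\,,\qquad \Mhat=\sum_{i=1}^{g-1}\hat\omega_i-2\sum_{i<j}\hat\Delta_{ij}+(g-1)\sum_{i=1}^{g-1}\hat\beta_i\,. \]
Everything thus reduces to computing $\langle\Mhat^{\,g}\mid X^{g-1}\rangle$. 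Expanding by multilinearity and feeding the monomials into the graphical calculus of Section \ref{proofmain}: each monomial is indexed by a loop-labelled graph on $g-1$ vertices with $g$ edges, hence of Euler characteristic $-1$; by Propositions \ref{multiplicative} and \ref{contractions} only the relevant graphs contribute, each by a product of powers of $-1$, $2$, $2g-2$ times one of $\langle\hat\Delta,\hat\Delta,\hat\Delta\rangle$, $\pair{\hat\beta,\hat\omega}$, $\pair{\hat\omega,\hat\omega}$. By Proposition \ref{triplediagonal} one has $\langle\hat\Delta,\hat\Delta,\hat\Delta\rangle=\pair{\hat\omega,\hat\omega}-\sum_v\varphi(X_v)\log Nv$, while (\ref{htpoint}) with $\h'_\ll(x_\alpha)=0$ gives $\pair{\hat\beta,\hat\omega}=\tfrac{1}{2g-2}\pair{\hat\omega,\hat\omega}$ and $\pair{\hat\beta,\hat\beta}=0$. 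So $\langle\Mhat^{\,g}\mid X^{g-1}\rangle$ is a $\qq$-linear combination of $\pair{\hat\omega,\hat\omega}$ and $\sum_v\varphi(X_v)\log Nv$ with coefficients depending only on $g$.

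The main obstacle is the explicit evaluation of this combinatorial sum for arbitrary $g$: the target value is $\langle\Mhat^{\,g}\mid X^{g-1}\rangle=\tfrac{2^{g-1}(g-1)!\,g!}{12}\bigl(\pair{\hat\omega,\hat\omega}+2\sum_v\varphi(X_v)\log Nv\bigr)$, and substituting it into the displayed identity yields the theorem. (For $g=3$ this value is precisely what Theorem \ref{square} gives when $x_\alpha=0$, a reassuring check; equivalently the computation amounts to showing that the universal coefficients of Theorem \ref{mainintro} for the all-$1$ tuple of length $g-1$ are $a=\tfrac{1}{24g}$ and $b=\tfrac{1}{12g}$.) I expect the cleanest bookkeeping to go through exponential generating functions for labelled multigraphs, with building blocks an $\hat\omega$-loop (weight $1$), a $\hat\beta$-loop (weight $g-1$) and a $\hat\Delta$-edge (weight $-2$), the relevance condition forcing every connected component to contract either to a single labelled loop or, for exactly one component, to the figure-eight, the dumbbell, or the theta graph of Section \ref{proofmain}; the balanced weights and the symmetry in the $g-1$ coordinates should make the sum collapse. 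Alternatively one can induct on $g$, the base $g=3$ being Theorem \ref{square}. Once $\langle\Mhat^{\,g}\mid X^{g-1}\rangle$ is identified, dividing by $2^{g-1}(g-1)!\,g!$ finishes the proof.
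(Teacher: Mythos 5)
Your setup coincides with the paper's: realize $\Theta$ as $Z_{g-1,\alpha}$ with $(2g-2)\alpha$ canonical, apply Theorem \ref{main} with $D=(g-1)\alpha$ and $\deg f_{g-1,\alpha}=(g-1)!$, $\deg_\ll Z_{g-1,\alpha}=g!$, and observe via the graphical calculus of Section \ref{proofmain} that the resulting intersection number is a $\qq$-combination of $\pair{\hat{\omega},\hat{\omega}}$ and $\langle \hat{\Delta},\hat{\Delta},\hat{\Delta}\rangle$ with coefficients depending only on $g$. But at exactly that point you stop: the whole content of the theorem is the determination of those universal coefficients for every $g$, and your proposal only states the target value of $\langle \Mhat^{\,g} \mid X^{g-1}\rangle$ and expresses the expectation that exponential generating functions ``should make the sum collapse'', or that one can induct on $g$ with base Theorem \ref{square}. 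No such argument is supplied, and neither suggestion is obviously workable: the number of vertices, the degree $2g-2$ of $\hat{\omega}$, and the weights of the loops all change with $g$, so there is no evident recursion from $X^{g-2}$ to $X^{g-1}$, and the full graph sum is a genuinely nontrivial piece of combinatorics (closely related to what Wilms does in \cite{wi}). As written, this is a gap at the crux of the proof; the preliminary reductions (base change, translation by $2$-torsion, the cases $g\le 2$) are fine but peripheral.

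It is worth noting how the paper circumvents the evaluation you postpone, since either route would repair your argument. First, after absorbing the $\hat{\alpha}$-terms via $(2g-2)\hat{\alpha}=\hat{\omega}+\hat{\mu}$ with $\hat{\mu}$ an admissible bundle on $\Spec k$, only the coefficient of $\langle\hat{\Delta},\hat{\Delta},\hat{\Delta}\rangle$ in $\langle(\sum_i \hat{\omega}_i-\sum_{i<j}\hat{\Delta}_{ij})^g\mid X^{g-1}\rangle$ is needed, and this single coefficient, $-g!(g-1)!/12$, is imported from Wilms's combinatorial lemmas \cite[Lemmas 5.1(b) and 5.2]{wi}; by Proposition \ref{triplediagonal} this already fixes the coefficient $\tfrac16$ of the $\varphi$-sum. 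Second, the coefficient of $\pair{\hat{\omega},\hat{\omega}}$ is not computed combinatorially at all: since it is universal in $(X,\alpha)$ for fixed $g$, one specializes to a genus $g$ curve over a number field whose jacobian has everywhere good reduction (such curves exist by Moret-Bailly \cite{mbskolem}) and compares with the good-reduction case, Corollary \ref{realcatalyzer} -- itself a consequence of Autissier's formula and Wilms's identity (Theorem \ref{wilmsrewrite}) -- using the positivity of $\pair{\hat{\omega},\hat{\omega}}$ to conclude that the coefficient is $\tfrac1{12}$. To complete your proof you must either carry out the full combinatorial evaluation you defer, or invoke an indirect determination of the coefficients along these lines.
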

\begin{proof} Note that we can write $\Theta=Z_{g-1,\alpha}$ for a suitable $\alpha \in \mathrm{Div}^1 X$ such that  $(2g-2)\alpha$ is a canonical divisor on $X$.  
For $i,j=1,\ldots,g-1$ let $p_i \colon X^{g-1} \to X$ denote the projection on the $i$-th factor, and let $p_{ij} \colon X^{g-1} \to X^2$ denote the projection on the $i$-th and $j$-th factor. Write $\hat{\omega}_i = p_i^*\hat{\omega}$, $\hat{\Delta}_{ij}=p_{ij}^*\hat{\Delta}$ and $\hat{\alpha}_i =p_i^*\hat{\alpha}$.  
We note that
\[ \deg f_{g-1,\alpha} =(g-1)! \quad \textrm{and} \quad \deg_\ll Z_{g-1,\alpha}=g! \, .   \]
By our choice of $\alpha \in \mathrm{Div}^1 X$ we can write $(2g-2)\hat{\alpha}=\hat{\omega} + \hat{\mu}$ where $\hat{\mu}$ is a suitable admissible line bundle on $\Spec k$.
Then, applying Theorem \ref{main} with $D=(g-1)\alpha$ we find
\[ \begin{split} 
2^g (g-1)! & g! g\, [k:\qq] \, \h'_\ll(\Theta)   \\ & = \left\langle \left(\sum_{i=1}^{g-1} \hat{\omega}_i -2\sum_{i<j}^{g-1} \hat{\Delta}_{ij} +(2g-2) \sum_{i=1}^{g-1} \hat{\alpha}_i -(g-1)^2\pair{\hat{\alpha},\hat{\alpha}}\right)^g \bigg| X^{g-1} \right\rangle \\
& = \left\langle \left(2\sum_{i=1}^{g-1}  \hat{\omega}_i -2\sum_{i<j}^{g-1} \hat{\Delta}_{ij} +(g-1) \hat{\mu} -(g-1)^2\pair{\hat{\alpha},\hat{\alpha}}\right)^g \bigg| X^{g-1} \right\rangle \\
& = 2^g \left\langle \left(\sum_{i=1}^{g-1} \hat{\omega}_i -\sum_{i<j}^{g-1} \hat{\Delta}_{ij} \right)^g \bigg| X^{g-1} \right\rangle
+ c' \left( \widehat{\deg} \,\hat{\mu} - (g-1)\pair{ \hat{\alpha},\hat{\alpha}} \right)
\end{split} \]
for some rational number $c'$ depending only on $g$. The equation 
$(2g-2)\hat{\alpha}=\hat{\omega} + \hat{\mu}$ leads by squaring both sides to the identity $4(g-1)(\widehat{\deg} \,\hat{\mu} -(g-1)\pair{ \hat{\alpha},\hat{\alpha}} )= -\pair{\hat{\omega},\hat{\omega}}$, and we conclude that
\[  (g-1)! g! g \,[k:\qq] \, \h'_\ll(\Theta) = \left\langle \left( \sum_{i=1}^{g-1} \hat{\omega}_i -\sum_{i<j}^{g-1} \hat{\Delta}_{ij}\right)^g \bigg| X^{g-1} \right\rangle + c \, \pair{\hat{\omega},\hat{\omega}} \]
for some rational number $c$ depending only on $g$. Now, our considerations in Section~\ref{proofmain} leading to the proof of Theorem \ref{mainintro} show that there exist rational numbers $a'$, $b'$ depending only on $g$ such that
\[ \left\langle \left( \sum_{i=1}^{g-1} \hat{\omega}_i -\sum_{i<j}^{g-1} \hat{\Delta}_{ij}\right)^g \bigg| X^{g-1} \right\rangle =  a' \pair{\hat{\omega},\hat{\omega}} + b' \langle \hat{\Delta}, \hat{\Delta}, \hat{\Delta} \rangle \, . \]
As it turns out, in \cite{wi} precisely the right amount of combinatorics is carried out in order to determine the universal constant $b'$. More precisely, combining \cite[Lemma~5.1(b)]{wi} and \cite[Lemma~5.2]{wi} we find that
\[ b' = -\frac{ g!(g-1)!}{12}  \]
(our $b'$ is $(-1)^g$ times the $a_1'$ from \cite[Lemma~5.1(b)]{wi}). As \[\langle \hat{\Delta}, \hat{\Delta}, \hat{\Delta} \rangle = \pair{\hat{\omega},\hat{\omega}} - \sum_{v \in M(k)} \varphi(X_v) \log Nv\] by Proposition \ref{triplediagonal} we find
that there exists a rational number $a$ depending only on $g$ such that 
\[ 2g\, [k:\qq] \, \h'_\ll(\Theta) = a \, \pair{\hat{\omega},\hat{\omega}} + \frac{1}{6} \sum_{v \in M(k)} \varphi(X_v) \log Nv \, . \]
Specializing to a curve of genus $g$ over some number field $k$ such that the associated jacobian has everywhere good reduction (such curves exist by \cite[Th\'eor\`eme 0.7, Exemple 0.9]{mbskolem}), and using that $\pair{\hat{\omega},\hat{\omega}} $ is always positive, we deduce from Theorem~\ref{realcatalyzer} that $a = \frac{1}{12}$.
\end{proof}

Our next goal is to prove Theorem \ref{autforjac}. By combining Corollary \ref{corofwilms}  and Theorem \ref{catalyzer} we find that
\begin{equation} \label{presymmetric} \begin{split}
\h_F(J) = & \,\, 2g \,\h'_\ll(\Theta) + \frac{1}{[k:\qq]} \sum_{v \in M(k)_0} \frac{1}{12}\left(\delta(X_v) + \vareps(X_v) - 2 \,\varphi(X_v)\right) \log Nv \\ & - \kappa_0 g + \frac{2}{[k:\qq]} \sum_{v \in M(k)_\infty} I(A_v,\lambda_v) \, . \end{split} 
\end{equation}
The issue is therefore to control the contribution from the finite places in this formula. This is taken care of by the next two propositions. 
\begin{prop} \label{graph1} Let $\bar{\Gamma}=(\Gamma,K)$ be a polarized metrized graph. Let $\delta$ be the total length and let $\tau$ be the tau-invariant (\ref{tauinv}) of $\Gamma$. Write $\varphi=\varphi(\bar{\Gamma})$ and $\vareps=\vareps(\bar{\Gamma})$ for the $\varphi$- and $\vareps$-invariant of $\bar{\Gamma}$, cf. (\ref{phiinv}) and (\ref{epsinv}). Then the equality 
\[ \frac{1}{12}(\delta+ \vareps - 2\,\varphi) = \frac{1}{8}\delta - \frac{1}{2} \tau  \]
holds.
\end{prop}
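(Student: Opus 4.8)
The plan is to unwind the definitions of $\varphi(\bar{\Gamma})$ and $\vareps(\bar{\Gamma})$ in (\ref{phiinv}) and (\ref{epsinv}) so as to reduce the asserted equality to a single identity about integrals of the diagonal value $g_\mu(x,x)$ of Zhang's Green's function, and then to evaluate that identity by comparing $g_\mu$ with the effective resistance $r$.

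First I would substitute. Abbreviating $P=\int_\Gamma g_\mu(x,x)\,\mu_a(x)$, $Q=\int_\Gamma g_\mu(x,x)\,\delta_K(x)$ and $R=\int_\Gamma g_\mu(x,x)\,\mu_{\mathrm{can}}(x)$, formulas (\ref{epsinv}) and (\ref{phiinv}) read $\vareps=(2g-2)P+Q$ and $\varphi=-\tfrac14\delta+\tfrac14\bigl((10g+2)P-Q\bigr)$, so a one-line computation turns the claimed equality into $2(g+1)P-Q=4\tau$. Using $\delta_K=2g\,\mu_a-2\,\mu_{\mathrm{can}}$, which is just a rewriting of (\ref{canandadmmeasure}), one has $Q=2gP-2R$, and the identity to be proved becomes
\[ \int_\Gamma g_\mu(x,x)\,\mu_a(x)+\int_\Gamma g_\mu(x,x)\,\mu_{\mathrm{can}}(x)=2\,\tau(\Gamma) . \]

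The second step is to pin the Green's functions down explicitly. Let $g_{\mathrm{can}}$ be the Green's function attached to $\mu_{\mathrm{can}}$, defined by the analogue of (\ref{green}) with $\mu_{\mathrm{can}}$ in place of $\mu_a$. Comparing $\Delta_y$ of $g_{\mathrm{can}}(x,y)$ and of $r(x,y)$ via (\ref{crmeasure}), and then using the symmetry and the $\mu_{\mathrm{can}}$-normalisation of $g_{\mathrm{can}}$, gives $g_{\mathrm{can}}(x,y)=\tau-\tfrac12 r(x,y)$ with $\tau$ the constant in (\ref{tauinv}); in particular $g_{\mathrm{can}}(x,x)=\tau$ for every $x\in\Gamma$. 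Plugging this into the standard comparison formula
\[ g_\mu(x,y)=g_{\mathrm{can}}(x,y)-\int_\Gamma g_{\mathrm{can}}(x,z)\,\mu_a(z)-\int_\Gamma g_{\mathrm{can}}(z,y)\,\mu_a(z)+\iint_{\Gamma\times\Gamma} g_{\mathrm{can}}(z,w)\,\mu_a(z)\,\mu_a(w) \]
(itself obtained by comparing $\Delta_y$ of the two sides and checking the $\mu_a$-normalisation) and specialising to $y=x$ expresses $g_\mu(x,x)$ in terms of $r$ and $\mu_a$ alone. Integrating that expression against $\mu_a$ and against $\mu_{\mathrm{can}}$ and adding, the terms involving only $\mu_a$ cancel, leaving precisely $\iint_{\Gamma\times\Gamma} r(x,z)\,\mu_a(x)\,\mu_{\mathrm{can}}(z)$. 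Since $\int_\Gamma r(x,z)\,\mu_{\mathrm{can}}(z)=2\tau$ is independent of $x$ by (\ref{tauinv}) and $\mu_a$ has total mass $1$, this double integral equals $2\tau$, which is exactly the displayed identity.

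All the computations are elementary; the only step demanding care is the second one, namely establishing $g_{\mathrm{can}}(x,y)=\tau-\tfrac12 r(x,y)$ and the comparison formula, where one must keep track of the sign conventions for the Laplacian on a metrized graph and of the $\delta_x$ term in (\ref{crmeasure}) (which is what makes $\mu_{\mathrm{can}}$, and hence $\mu_a$, a probability measure). Since $g_\mu$, $g_{\mathrm{can}}$ and $r(x,\cdot)$ are symmetric and piecewise polynomial, interchanging $\Delta_y$ with the integrals and applying Fubini are all legitimate.
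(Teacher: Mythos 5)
Your proposal is correct. It is also not the same proof as the paper's, although the two arguments meet at the same intermediate identity: your reduction $2(g+1)P-Q=4\tau$ (with $P=\int_\Gamma g_\mu(x,x)\,\mu_a$, $Q=\int_\Gamma g_\mu(x,x)\,\delta_K$) is, after substituting $\vareps=(2g-2)P+Q$, exactly the statement $-\vareps+4g\int_\Gamma g_\mu(x,x)\,\mu_a=4\tau$ that the paper establishes. The difference lies in how this is proved. The paper imports two external inputs: the identity $r(x,y)=g_\mu(x,x)-2g_\mu(x,y)+g_\mu(y,y)$ from \cite[Equation (3.5.1)]{zhadm}, which gives $P=\tfrac12\iint r\,\mu_a\,\mu_a$, and Moriwaki's formula $\vareps=2g\,g_\mu(x,K)+r(x,K)$ from \cite[Lemma 4.1]{mosharp}, which after integration against $\mu_a$ handles the $\delta_K$-term; combining these with $2g\,\mu_a=\delta_K+2\mu_{\mathrm{can}}$ and the constancy of $\int_\Gamma r(x,y)\,\mu_{\mathrm{can}}(y)=2\tau$ gives $4\tau$. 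You instead dispose of $\delta_K$ purely algebraically via $\delta_K=2g\,\mu_a-2\,\mu_{\mathrm{can}}$, reducing everything to the diagonal integrals $P+R=2\tau$, and then compute $g_\mu(x,x)$ from scratch: the observation $g_{\mathrm{can}}(x,y)=\tau-\tfrac12 r(x,y)$ (which follows from (\ref{crmeasure}), the normalisation, and the $x$-independence of $\tau$ in (\ref{tauinv})) together with the change-of-measure comparison between $g_\mu$ and $g_{\mathrm{can}}$ yields $g_\mu(x,x)=\int_\Gamma r(x,z)\,\mu_a(z)-\tfrac12\iint r\,\mu_a\,\mu_a$, from which $P+R=\iint r\,\mu_a\,\mu_{\mathrm{can}}=2\tau$. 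What your route buys is self-containedness: it avoids Moriwaki's lemma altogether and derives the needed $r$-versus-$g_\mu$ relation from the defining properties (\ref{crmeasure}), (\ref{green}), (\ref{tauinv}) alone (your diagonal formula is in fact equivalent to the cited identity from \cite{zhadm}); the only unproved input you share with the paper is the constancy of $\int_\Gamma r(x,y)\,\mu_{\mathrm{can}}(y)$, quoted from \cite{cr}. The paper's version is shorter given the references. One presentational remark: rather than positing $g_{\mathrm{can}}$ abstractly and then identifying it, it is cleaner to define $g_{\mathrm{can}}(x,y):=\tau-\tfrac12 r(x,y)$ and verify directly that it satisfies $\Delta_y g_{\mathrm{can}}(x,y)=\delta_x(y)-\mu_{\mathrm{can}}(y)$ and $\int_\Gamma g_{\mathrm{can}}(x,y)\,\mu_{\mathrm{can}}(y)=0$, since existence of a Green's function for the signed measure $\mu_{\mathrm{can}}$ is then not presupposed; uniqueness (harmonic functions on a compact connected metrized graph are constant, and $\mu_{\mathrm{can}}$ has total mass one) is all that is needed for your comparison formula.
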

\begin{proof} Let $g$ be the genus of $\bar{\Gamma}$. Let $\mu_a$ be the admissible measure (\ref{canandadmmeasure}) on $\Gamma$ determined by $K$ and let $g_\mu$ denote the associated Green's function (\ref{green}). We will write the invariant $-\vareps + 4g \int_\Gamma g_\mu(x,x) \mu_a$ in two different ways, and compare the outcomes. We recall from (\ref{phiinv}) and (\ref{epsinv}) that
\[ \varphi = -\frac{1}{4}\delta + \frac{1}{4} \int_\Gamma g_\mu(x,x)((10g+2)\mu_a - \delta_K) \]
and 
\[ \vareps = \int_\Gamma g_\mu(x,x)((2g-2)\mu_a + \delta_K ) \, . \]
We find
\begin{equation} \label{firstsum} \begin{split} -\vareps + 4g \int_\Gamma g_\mu(x,x) \mu_a & = \int_\Gamma g_\mu(x,x)((2g+2)\mu_a - \delta_K ) \\
& = \frac{1}{3}\delta - \frac{2}{3} \vareps + \frac{4}{3}\varphi \, . \end{split}
\end{equation}
Let $r(x,y)$ denote the effective resistance between points $x, y$ on $\Gamma$. Then we have
\[ r(x,y) = g_\mu(x,x)-2\,g_\mu(x,y) + g_\mu(y,y) \]
by \cite[Equation (3.5.1)]{zhadm} and this yields, since $\int_\Gamma g_\mu(x,y)\mu_a(y)=0$ by (\ref{green}),
\begin{equation} \label{formulac} \int_\Gamma g_\mu(x,x) \,\mu_a = \frac{1}{2} \int_{\Gamma \times \Gamma} r(x,y)\,\mu_a(x)\,\mu_a(y) \, . 
\end{equation}
By \cite[Lemma 4.1]{mosharp} we have for all $x \in \Gamma$ that
\[  \vareps = 2g\, g_\mu(x,K) + r(x,K) \, , \]
and hence upon integrating both sides against $\mu_a$ we find
\begin{equation} \label{formulaeps} \vareps = \int_{\Gamma \times \Gamma} r(x,y) \,\delta_K(y) \,\mu_a(x) \, .  
\end{equation}
Recall from (\ref{canandadmmeasure}) that $2g\,\mu_a = \delta_K + 2\mu_{\mathrm{can}}$ and from (\ref{tauinv}) that $2\,\tau = \int_\Gamma r(x,y)\mu_{\mathrm{can}}(y)$ independently of the choice of $x \in \Gamma$. Combining (\ref{formulac}) and (\ref{formulaeps}) then gives
\begin{equation} \label{secondsum} \begin{split} -\vareps + 4g \int_\Gamma g_\mu(x,x) \,\mu_a & = \int_{\Gamma \times \Gamma} r(x,y) \,\mu_a(x)\,(-\delta_K +2g\,\mu_a)(y) \\
& = 2 \int_{\Gamma \times \Gamma} r(x,y) \,\mu_a(x) \,\mu_{\can}(y) \\
 & = 4\,\tau \, . 
\end{split}  
\end{equation}
Combining (\ref{firstsum}) and (\ref{secondsum}) we find the required identity.
\end{proof} 
\begin{prop} \label{graph2} Let $\Gamma$ be a metrized graph with total length $\delta$ and tau-invariant~$\tau$. Let $\alpha = \frac{1}{8}\delta - \frac{1}{2}\tau$. Then we have $\alpha \geq 0$, and equality holds if and only if $\Gamma$ is a tree.
\end{prop}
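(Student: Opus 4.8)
The plan is to get the inequality $\alpha\ge 0$ immediately from Rumely's inequality $\delta(\Gamma)\ge 4\tau(\Gamma)$ recalled just before Theorem~\ref{autforjac}, since $\alpha=\frac18\bigl(\delta(\Gamma)-4\tau(\Gamma)\bigr)$; the real content is the characterization of the equality case. For that I would first reduce to a single block. Both invariants are additive over the blocks of $\Gamma$ (the maximal connected subgraphs of $\Gamma$ having no cut vertex, glued at the cut vertices of $\Gamma$): for $\delta$ this is trivial, and additivity of the tau-invariant under joining two metrized graphs at a point is one of the basic behaviours of $\tau$ under graph operations, see \cite{cr} \cite{br} \cite{ciop}. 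Writing $\alpha(B)=\frac18\delta(B)-\frac12\tau(B)$ for a metrized graph $B$, we thus have $\alpha(\Gamma)=\sum_B\alpha(B)$ over the blocks $B$ of $\Gamma$, and by the inequality just recalled every summand is $\ge 0$. Since $\Gamma$ is a tree exactly when each of its blocks is a segment, it suffices to prove: (i) a segment $B$ of length $\ell$ satisfies $\alpha(B)=0$; and (ii) a block $B$ that is not a segment --- hence a bridgeless (i.e.\ $2$-edge-connected) metrized graph with $b_1(B)\ge 1$ --- satisfies $\alpha(B)>0$.

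Part (i) is a one-line computation: if $B$ is a segment with endpoints $p,q$ then $r(p,y)$ is the arclength from $p$ to $y$, and $(\ref{crmeasure})$ gives $\mu_{\mathrm{can}}=\frac12(\delta_p+\delta_q)$, so $\tau(B)=\frac12\int_B r(p,y)\,\mu_{\mathrm{can}}(y)=\frac14\ell$ by $(\ref{tauinv})$, and therefore $\alpha(B)=\frac18\ell-\frac12\cdot\frac14\ell=0$.

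Part (ii) is the main obstacle; the point to establish is that Rumely's inequality is strict as soon as the graph contains a cycle. I would argue as follows: if $B$ is bridgeless then for every edge $e$ of $B$ the graph $B\setminus e^{\circ}$ obtained by removing the interior of $e$ is still connected, so the effective resistance $R_e$ between the two endpoints of $e$ inside $B\setminus e^{\circ}$ is finite; by the Chinburg--Rumely formula \cite{cr} for the canonical measure, $\mu_{\mathrm{can}}$ then has strictly positive density $(L_e+R_e)^{-1}\,\d x$ on the interior of each edge $e$ of $B$, where $L_e$ is the length of $e$. Substituting this and the piecewise-quadratic expression for the resistance function into $(\ref{tauinv})$ and collecting terms edge by edge exhibits $\frac14\delta(B)-\tau(B)$ as a sum of manifestly positive contributions, one per edge --- this is precisely the refinement of Rumely's bound underlying the equality discussion in \cite{br} \cite{ciop} --- so $\alpha(B)>0$. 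Combining (i) and (ii) with $\alpha(\Gamma)=\sum_B\alpha(B)$ and the non-negativity of every term, we conclude that $\alpha(\Gamma)=0$ if and only if every block of $\Gamma$ is a segment, i.e.\ if and only if $\Gamma$ is a tree. The delicate point throughout is Part (ii): producing the strict inequality (equivalently, pinning down the equality case of $\delta\ge 4\tau$) requires the explicit Chinburg--Rumely description of $\mu_{\mathrm{can}}$ and some care with the edgewise bookkeeping, whereas the reduction to blocks and Part (i) are formal.
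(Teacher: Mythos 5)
Your proposal is correct in outline, but it is worth saying plainly how it compares with what the paper actually does: the paper's proof of Proposition \ref{graph2} is a two-line citation --- it quotes the statement $\tau\le\frac14\delta$ \emph{together with} its equality case (equality iff $\Gamma$ is a tree) from \cite[Equation~(14.3)]{br} and \cite[Equation~(10)]{ciop}, remarking that everything follows from Cinkir's explicit edge-by-edge formula for $\tau$ \cite[Proposition~2.9]{ciop}. You instead take only the inequality from Rumely and try to rederive the equality case: reduction to blocks via additivity of $\tau$ under one-point unions (true, and easy to check directly by basing the integral in (\ref{tauinv}) at the join point, since $r(p,p)=0$ makes the discrepancy of the canonical measures at $p$ invisible), the segment computation $\mu_{\mathrm{can}}=\frac12(\delta_p+\delta_q)$, $\tau=\ell/4$ (correct), and then strict inequality for $2$-edge-connected blocks. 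That last step, your part (ii), is the only non-formal point, and as written it is a sketch rather than a proof: positivity of the density $(L_e+R_e)^{-1}\,\d x$ on edge interiors does not by itself give the strict bound, because the Chinburg--Rumely measure also carries point masses $(1-\tfrac12 v(p))\delta_p$ at the vertices, which are negative at vertices of valence $\ge 3$, so the ``manifestly positive edgewise contributions'' require exactly the bookkeeping encoded in Cinkir's formula --- i.e.\ you end up citing the same sources the paper cites. So relative to the paper's own standard there is no gap, but your argument is not more self-contained, and the block decomposition buys little: the edge-by-edge formula bounds each edge's contribution to $\tau$ by $L_e/4$ with equality precisely for bridges, which gives both the inequality and the equality case for arbitrary $\Gamma$ in one stroke, without reducing to bridgeless blocks at all. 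If you want a genuinely self-contained proof, the missing ingredient is a proof of that edgewise bound (or of \cite[Proposition~2.9]{ciop}); otherwise the honest formulation of part (ii) is simply the citation.
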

\begin{proof} We refer to \cite[Equation (14.3)]{br} or \cite[Equation (10)]{ciop} for the statement that $ \tau \leq \frac{1}{4}\delta$,
with equality if and only if $\Gamma$ is a tree. This statement can be deduced for example from the explicit formula for $\tau$ in \cite[Proposition 2.9]{ciop}.
\end{proof}
Noting that $J$ has good reduction at a finite place $v \in M(k)$ if and only if the dual graph of the reduction of $X$ at $v$ is a tree, we end up with the following result.
\begin{thm} \label{symmetric} Let $X$ be a smooth projective geometrically connected curve of genus $g \geq 1$ with semistable reduction over $k$. Let $(J,\lambda)$ be the jacobian of $X$. For each $v \in M(k)_0$ let 
\begin{equation} \label{defalphav} \alpha(X_v) = \frac{1}{8}\delta(X_v) - \frac{1}{2} \tau(X_v) \, . 
\end{equation}
Then for each $v \in M(k)_0$ we have $\alpha(X_v) \geq 0$, and we have $\alpha(X_v)=0$ if and only if $J$ has good reduction at $v$. Moreover, the identity
\[ \h_F(J) = 2g \,\h'_\ll(\Theta) +\frac{1}{[k:\qq]} \sum_{v \in M(k)_0} \alpha_v \log Nv- \kappa_0 g + \frac{2}{[k:\qq]} \sum_{v \in M(k)_\infty} I(J_v,\lambda_v) \]
holds in $\rr$.
\end{thm}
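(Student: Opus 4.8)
The plan is to assemble Theorem~\ref{symmetric} (which is Theorem~\ref{autforjac}) purely from pieces already in place in the excerpt. The proof is a short "bookkeeping" argument: equation~(\ref{presymmetric}) already expresses $\h_F(J)$ in terms of $\h'_\ll(\Theta)$, a sum over finite places of $\tfrac{1}{12}(\delta(X_v)+\vareps(X_v)-2\varphi(X_v))\log Nv$, the constant $-\kappa_0 g$, and the archimedean sum $\tfrac{2}{[k:\qq]}\sum_{v\in M(k)_\infty} I(J_v,\lambda_v)$. So the only work is to recognize the finite-place contribution as $\sum_{v\in M(k)_0}\alpha(X_v)\log Nv$ with $\alpha(X_v)$ as in~(\ref{defalphav}), and to record its sign and vanishing behaviour.

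First I would invoke Proposition~\ref{graph1}, applied to the polarized metrized graph $\bar\Gamma_v=(\Gamma_v,K_v)$ attached to the reduction of $X$ at each $v\in M(k)_0$, to get the pointwise identity
\[ \tfrac{1}{12}\bigl(\delta(X_v)+\vareps(X_v)-2\varphi(X_v)\bigr) = \tfrac{1}{8}\delta(X_v)-\tfrac{1}{2}\tau(X_v) = \alpha(X_v)\,. \]
Substituting this into~(\ref{presymmetric}) term-by-term over $v\in M(k)_0$ yields exactly the displayed height formula of the theorem. Next I would invoke Proposition~\ref{graph2} (applied to $\Gamma_v$) to conclude $\alpha(X_v)\ge 0$, with equality precisely when $\Gamma_v$ is a tree. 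Finally, to translate the tree condition into a statement about $J$, I would recall the standard fact that the jacobian $J$ has good reduction at $v$ if and only if the abelian part of the special fibre of the Néron model is all of it, equivalently if and only if the toric rank is zero, equivalently if and only if the first Betti number of the dual graph $\Gamma_v$ vanishes, i.e. $\Gamma_v$ is a tree (this is the criterion already hinted at in the sentence preceding Theorem~\ref{symmetric}). Combining these gives $\alpha(X_v)=0 \Leftrightarrow J$ has good reduction at $v$, completing the proof.

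There is essentially no obstacle here: all the analytic content has been front-loaded into Corollary~\ref{corofwilms}, Theorem~\ref{catalyzer}, and Propositions~\ref{graph1}--\ref{graph2}, and the present statement is their formal combination. The one point requiring a word of justification rather than mere citation is the equivalence between "$\Gamma_v$ is a tree" and "$J$ has good reduction at $v$"; I would either cite the semistable reduction picture for jacobians (Deligne--Mumford / the relation between the dual graph and the toric part of the Néron model) or simply note that the toric rank of $J$ at $v$ equals $h^1(\Gamma_v)$, so good reduction $\Leftrightarrow h^1(\Gamma_v)=0 \Leftrightarrow \Gamma_v$ a tree. With that remark in hand the proof is complete, and consists of the two substitutions and the sign statement described above.

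\begin{proof} Fix $v\in M(k)_0$ and let $\bar\Gamma_v=(\Gamma_v,K_v)$ be the polarized metrized graph of the reduction of $X$ at $v$, of genus $g$. By Proposition~\ref{graph1} applied to $\bar\Gamma_v$ we have
\[ \tfrac{1}{12}\bigl(\delta(X_v)+\vareps(X_v)-2\,\varphi(X_v)\bigr) = \tfrac{1}{8}\delta(X_v)-\tfrac{1}{2}\tau(X_v) = \alpha(X_v)\,. \]
Substituting this identity for each $v\in M(k)_0$ into equation~(\ref{presymmetric}) yields at once the displayed identity
\[ \h_F(J) = 2g\,\h'_\ll(\Theta) + \frac{1}{[k:\qq]}\sum_{v\in M(k)_0}\alpha(X_v)\log Nv - \kappa_0 g + \frac{2}{[k:\qq]}\sum_{v\in M(k)_\infty} I(J_v,\lambda_v)\,. \]
By Proposition~\ref{graph2} applied to $\Gamma_v$ we have $\alpha(X_v)\geq 0$, with $\alpha(X_v)=0$ if and only if $\Gamma_v$ is a tree. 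Finally, the toric rank of the special fibre of the N\'eron model of $J$ at $v$ equals $h^1(\Gamma_v)$, so $J$ has good reduction at $v$ precisely when $h^1(\Gamma_v)=0$, that is, precisely when $\Gamma_v$ is a tree. Hence $\alpha(X_v)=0$ if and only if $J$ has good reduction at $v$, and the proof is complete.
\end{proof}
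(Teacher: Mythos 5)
Your proposal is correct and follows exactly the paper's own route: substitute the pointwise identity of Proposition~\ref{graph1} into equation~(\ref{presymmetric}), apply Proposition~\ref{graph2} for the sign and vanishing of $\alpha(X_v)$, and invoke the standard equivalence between good reduction of $J$ at $v$ and the dual graph $\Gamma_v$ being a tree. The only difference is cosmetic: you spell out the toric-rank justification ($h^1(\Gamma_v)=0$) for that last equivalence, which the paper merely asserts in passing.
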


\vspace{0.5cm}

\noindent Address of the author:\\ \\
Mathematical Institute  \\
Leiden University \\
PO Box 9512  \\
2300 RA Leiden  \\
The Netherlands  \\ 
Email: \verb+rdejong@math.leidenuniv.nl+

\end{document}